\documentclass[a4paper,10pt]{article} 
\pagestyle{headings}
\usepackage{amsmath,amsthm,amssymb,enumerate}
\usepackage{xcolor}
\usepackage{mathrsfs}
\usepackage[affil-it]{authblk}
\usepackage{gensymb}
\usepackage[square,sort&compress,comma,numbers]{natbib}
\usepackage{bm}
\usepackage{hyperref}
\usepackage{subfig}
\usepackage{graphicx}
\usepackage[margin=3cm]{geometry}
\usepackage{tikz}
\usepackage{esint}
\usepackage{caption}
\usetikzlibrary{shapes,calc}
\usepackage{verbatim}
\usepackage{stmaryrd}
\usepackage{bm}
\usepackage{marginnote}

\parindent=0pt

\newcommand{\mathbi}[1]{{\boldsymbol #1}}
\def\N{\mathbb{N}}

\newcommand{\ds}{{\rm\,ds}}

\def\R{\mathbb{R}}
\def\C{\mathbf{C}}
\def\dist{{\rm dist}}

\def\dx{\,{\rm d}\x}
\def\<{\langle}
\def\>{\rangle}
\def\Poly{\cP}

\def\bt{\begin{theorem}}
\def\et{\end{theorem}}
\def\el{\end{lemma}}
\def\bc{\begin{corollary}}
\def\ec{\end{corollary}}
\def\bd{\begin{definition}}
\def\ed{\end{definition}}
\def\br{\begin{remark}}
\def\er{\end{remark}}

\def\M{{\rm M}}
\def\NC{{\rm NC}}
\def\C{{\rm C}}
\def\err{{\rm err}}
\def\ndof{{\textbf{ndof}}}
\def\order{{\rm order}}

\DeclareMathOperator*{\argmin}{argmin}
\newcommand{\cA}{\mathcal A}
\newcommand{\cB}{\mathcal B}

\newcommand{\cT}{\mathcal T}

\newcommand{\cP}{\mathcal P}

\newcommand{\cQ}{\mathcal Q}

\newcommand{\mesh}{{\mathcal M}}

\newcommand{\bu}{\boldsymbol{u}}
\newcommand{\bF}{\boldsymbol{F}}
\newcommand{\bA}{\boldsymbol{A}}
\newcommand{\bl}{\boldsymbol{l}}
\newcommand{\bB}{\boldsymbol{B}}
\newcommand{\bfa}{\boldsymbol{f}}
\newcommand{\bW}{\boldsymbol{W}}
\newcommand{\bS}{\boldsymbol{S}}
\newcommand{\bbeta}{\boldsymbol{\beta}}
\newcommand{\bC}{\boldsymbol{C}}
\newcommand{\sit}{\sum_{K \in\mathcal{T}_h}\int_K}

\newcommand{\x}{\mathbi{x}}

\newcommand{\BFS}{Bogner-Fox-Schmit }

\newcommand{\be}{\begin{equation}}
\newcommand{\ee}{\end{equation}}

\renewcommand{\O}{\Omega}

\newcommand{\half}{{\frac 1 2}}

\newcommand{\ba}{\begin{array}{llll}   }
\newcommand{\bac}{\begin{array}{c}}
\newcommand{\bari}{\begin{array}{r}}
\newcommand{\ea}{\end{array}}

\newcommand{\NORM}[1]{{\left\vert\kern-0.25ex\left\vert\kern-0.25ex\left\vert #1 
    \right\vert\kern-0.25ex\right\vert\kern-0.25ex\right\vert}}

\def\hat#1{\widehat{#1}}

\newcommand{\fl}{\;\text{ for all }}
\def\t{\rm{ true}}
\def\a{{ \alpha}}

\usepackage{graphicx}
\usepackage{epstopdf}
\usepackage{pdfpages}
\usepackage{subfig,epsfig}
\usepackage{textcomp}
\usepackage{color}
\usepackage{tikz}
\usepackage{caption}

\usepackage{cancel}

\usetikzlibrary{arrows}
\usetikzlibrary{shapes,calc}

\usepackage[vlined]{algorithm2e}

\SetFuncSty{textsc}
\SetKwFunction{frun}{Run}
\SetKwHangingKw{arun}{\frun}
\SetKwFunction{fset}{Set}
\SetKwHangingKw{aset}{\fset}
\SetKwFunction{ffind}{Find}
\SetKwHangingKw{afind}{\ffind}
\SetKwFunction{fselect}{Select}
\SetKwHangingKw{aselect}{\fselect}
\SetKwFunction{fcompute}{Compute}
\SetKwHangingKw{acompute}{\fcompute}
\SetKwFunction{fsolve}{Solve}
\SetKwHangingKw{asolve}{\fsolve}
\SetKwFunction{fupdate}{Update}
\SetKwHangingKw{aupdate}{\fupdate}
\SetKwFunction{festimate}{Estimate}
\SetKwHangingKw{aestimate}{\festimate}
\SetKwFunction{fmark}{Mark}
\SetKwHangingKw{amark}{\fmark}
\SetKwFunction{fbreak}{Break}
\SetKwHangingKw{abreak}{\fbreak}
\SetKwFunction{frefine}{Refine}
\SetKwHangingKw{arefine}{\frefine}
\SetKwFunction{compute}{Compute}
\SetKwFunction{set}{Set}

\newenvironment{algof}%
{\algorithm}%
{\endalgorithm}

\usepackage[square,sort&compress,comma,numbers]{natbib}

\def\property#1{{\rm\textbf{(P#1)}}}

\newtheorem{theorem}{Theorem}[section]
\newtheorem{remark}[theorem]{Remark}

\newtheorem{lemma}[theorem]{Lemma} 
\newtheorem{definition}[theorem]{Definition}

\newtheorem{corollary}[theorem]{Corollary}
\newtheorem{assumption}[theorem]{Assumption}

\numberwithin{equation}{section}


\def\XXint#1#2#3{{\setbox0=\hbox{$#1{#2#3}{\int}$ }
\vcenter{\hbox{$#2#3$ }}\kern-.6\wd0}}

\usepackage{mathtools}  
\mathtoolsset{showonlyrefs} 
\usepackage[normalem]{ulem}
\normalem
\newcounter{corr}
\definecolor{violet}{rgb}{0.580,0.,0.827}
\newcommand{\corr}[3]{\typeout{Warning : a correction remains in page
\thepage}
				\stepcounter{corr}        
				{\color{blue}\ifmmode\text{\sout{\ensuremath{#1}}}\else\sout{#1}\fi}
       {\color{red}#2}
       {\color{violet}#3}}


\newcounter{cexp}
\catcode`\@=11
\def\terml#1{T_{\refstepcounter{cexp}\@bsphack
\protected@write\@auxout{}%
           {\string\newlabel{#1}{{\thecexp}{\thepage}}}\thecexp}}
\catcode`\@=12

\date{}
\begin{document}
\title{Conforming and Nonconforming Finite Element Methods for Biharmonic Inverse Source Problem}
\author{Devika Shylaja,\, M. T. Nair\footnote{Department of Mathematics, Indian Institute of Technology Madras, Chennai, Tamil Nadu 600 036, India.
	011353@imail.iitm.ac.in; {mtnair@iitm.ac.in}}}
\maketitle

\begin{abstract}
	This paper deals with the numerical approximation of the biharmonic inverse source problem in an abstract setting in which the measurement data is finite-dimensional. This unified framework in particular covers the conforming and nonconforming finite element methods (FEMs). The inverse problem is analysed through the forward problem. Error estimate for the forward solution is derived in an abstract set-up that applies to conforming and Morley nonconforming FEMs. Since the inverse problem is ill-posed, Tikhonov regularisation is considered to obtain a stable approximate solution. Error estimate is established for the regularised solution for different regularisation schemes. Numerical results that confirm the theoretical results are also presented.
\end{abstract}
{\bf Keywords: } finite element methods, biharmonic problem, inverse source problem, error estimates, Tikhonov regularization
\section{Introduction}\label{sec.intro}
Let $\Omega\subset\mathbb{R}^2$ be a domain with Lipschitz boundary. Consider the boundary value problem associated with the biharmonic equation: Given the source field $f$, find the displacement field $u$ such that
\begin{subequations}\label{biharmoniceq.bdy}
	\begin{align}
	&  \Delta^2  u = f \mbox{ in } \Omega, \label{biharmoniceq} \\
	&u=0,\,\frac{\partial u}{\partial n}=0
	\text{  on }\partial\Omega, \label{bdycondition}
	\end{align}
\end{subequations}
 where $\Delta^2$ denotes the fourth order biharmonic operator given by $$\Delta^2u=u_{xxxx}+u_{yyyy}+2u_{xxyy}$$  and $n$ denotes the outward normal vector to the boundary $\partial\Omega$ of $\Omega$. The boundary value problem \eqref{biharmoniceq.bdy} describes the bending of a thin elastic plate which is clamped along the boundary and acted upon by the vertical force $f$ \cite{ciarlet1978finite}. The biharmonic equations arise in various applications, for example, in applied mechanics, thin plate theories of elasticity and the Stokes problem in stream function and vorticity formulation \cite{ciarlet_plate,NS_FE79}. Several schemes, such as conforming and nonconforming finite element methods \cite{BS94,dou_percell_scott,PLPL75,HDM_linear,DS_HDM}, $C^0$ interior penalty methods \cite{SBLS05} and discontinuous Galerkin methods \cite{dG_IMES}, have been discussed in literature for the numerical approximation of biharmonic equation. 
 
 \medskip
 
 In certain situations, one may need to determine the source functions with
 some information about the solution $u$. These type of problems are clearly inverse
 to the direct problem of finding a solution to the partial differential equations known as the inverse problems. An application of such a problem associated with the biharmonic equations is to determine how much force to be applied for bending of a thin elastic plate to a particular  position. Such inverse problems are ill-posed as the solution does not depend continuously on the data. For example, consider
 $$u_k(x,y)=\frac{\sin^2(k\pi x)\sin^2(k\pi y)}{k^3} \quad x,y \in [0,1]^2, k \in \N.$$
 We observe that each $u_k$ satisfies \eqref{biharmoniceq}-\eqref{bdycondition} with 
 \begin{align*}
 f_k&= 8k\pi^4\cos^2(k\pi x )(\cos^2(k \pi y)-2\sin^2(k \pi y))\\
 &\qquad -8k\pi^4\sin^2(k\pi x )(2\cos^2(k \pi y)-3\sin^2(k \pi y))
 \end{align*}
 in place of $f$. Note that 
 $$\|u_k \|_{H^2(\O)} \leq \frac{1}{k}$$
 whereas
 $$\| f_k \|_{L^2(\O)} \geq 16\pi^8k^2.$$
 Thus,the sequence of data $(u_k)_{k \in \N}$ converges to $0$ in $H^2_0(\O)$ while the corresponding sequence of solutions $(f_k)_{k \in \N}$ diverges in $L^2(\O)$. 
 
 \medskip
 
 The paper deals with the numerical analysis of the biharmonic inverse source problem, more precisely, to determine $f$ provided $u$ is given only at a finite number of locations in $\O$. A finite number of measurements by themselves can not uniquely determine the approximation to the source field $f$ in a stable manner.  Hence, we consider reconstructions obtained using Tikhonov regularization \cite{IP_HEMHAN, MTN_book_LOE} where the ill-posed problem is replaced with a nearby well-posed problem which is an approximation to the original problem. The regularized problem  is then discretised and an error estimate is derived in an abstract setting for different regularization schemes, for instance, $L^2, H^1$ and $H^2$ regularization. This setting is shown to cover the conforming and nonconforming finite element methods (FEMs). This work is motivated from \cite{IP_AHSBAH} where the authors derive the error estimates for the Poisson inverse source problem that employs the conforming FEM.
 
 \medskip
 
The inverse problem is analysed through the forward direction of the problem \eqref{biharmoniceq.bdy}. Conforming FEMs for \eqref{biharmoniceq.bdy} requires the approximation space to be a subspace of $H^2_0(\O)$, which results in $C^1$ finite elements. One of the main challenges in the implementation while approximating solutions of fourth order problems using conforming finite elements is that the corresponding strong continuity requirement of function and its derivatives makes it difficult to construct such a finite element, and hence one needs to handle with 21 degrees of freedom in a triangle (for Argyris FEM) and 16 degrees of freedom in a rectangle (for \BFS FEM) \cite{ciarlet1978finite}. The nonconforming FEM relaxes the $C^1$ continuity requirement of the finite element space. The advantage of Morley nonconforming FEM is that it uses piecewise quadratic polynomials for the approximation and hence is simpler to implement. However, the convergence analysis offers a lot of challenges since the discrete space is not a subspace of $H^2_0(\O)$. In this paper, error estimate is established for the solution of the forward problem in a generic framework that is useful to prove the error estimates for the  inverse problem. 
 
 \medskip

The main contributions of this article are summarized as follows:
	\begin{itemize}
 	 {\it	\item Error estimate of the forward problem using measurment function when displacement $u$ is approximated in an abstract setting under a few generic assumptions;
 		\item Application to the \BFS and Argyris conforming FEMs, and Morley nonconforming FEMs;
 		\item Error estimate of the inverse problem in $H^k$ norm ($k \in \N_0=\N \cup \{0\}$) when $f \in H^k(\O)$ and the reconstructed regularised approximation of the source field is approximated in a unified framework under a few generic assumptions;
 	\item Application to the conforming and non-conforming FEMs;  
 	\item Numerical implementation procedure and results of computational experiments that validate the theoretical estimates.}
 \end{itemize}

\medskip

The rest of the paper is organized as follows. Section \ref{sec.forwardproblem} presents the forward problem, its weak formulation and some auxiliary results relevant for the paper. Section \ref{sec.abstract.forward} proves the abstract results for the forward problem under certain assumptions. The measurement model is discussed and is followed by its finite element approximation in Section \ref{sec.measurement}. Error estimate of the forward problem using these models that is useful to analyse the inverse problem is derived at the end of this section. Application to \BFS and Argyis conforming FEMs, and Morley nonconforming FEM are discussed in Section \ref{sec:fem}. Section \ref{sec.inverseproblem} deals with the inverse problem and the reconstructed source field obtained as a Tikhonov regularized inverse of the forward problem. The reconstructed regularised approximation of source field is further discretised using a piecewise polynomial finite element space and error estimates are derived in an abstract setting. This setting then applies to the conforming and nonconforming FEMs in Section \ref{sec.feminverse}. Section \ref{sec.numericalsection} deals with the description of a numerical implementation procedure and the results of the numerical experiments for $k=0,1,2$ that support the theoretical estimates obtained in the previous sections.

 \medskip
 
 Throughout the paper, standard notations on Lebesgue and
 Sobolev spaces and their norms are employed. The standard semi-norm and norm on $H^{s}(\Omega)$ (resp. $W^{s,p} (\Omega)$) for $s>0$ and $1 \le p \le \infty$ are denoted by $|\cdot|_{s}$ and $\|\cdot\|_{s}$ (resp. $|\cdot|_{s,p}$ and $\|\cdot\|_{s,p}$ ). The standard $L^2$ inner product and norm are denoted by $(\cdot, \cdot)$ and $\|\cdot\|,$ respectively. Denote by $H^{-s}(\Omega)$ the dual space of $H^s_0(\Omega)$ equipped with the norm
 \begin{equation*}
\|f\|_{-s}= \|f\|_{H^{-s}(\O)}:=\sup\left\{\frac{\langle f,g\rangle}{\|g\|_s}: g\in H^s_0(\Omega),\;\|g\|_s\neq 0\right\},
 \end{equation*}
 where $\langle \cdot,\cdot \rangle $ denotes duality pairing between $H^s_0(\Omega)$ and $H^{-s}(\Omega)$. Let
 $$V:=H^2_0(\O)={}\left\{ v \in H^2(\O); v=\frac{\partial v}{\partial n} = 0 \mbox{ on } \partial\O \right\}\mbox{ and } V':=H^{-2}(\O).$$
 We may recall that $V$ is a Hilbert space with associated norm $\|\Delta\cdot\|$.
%

\section{Forward problem}\label{sec.forwardproblem}
 The weak formulation of the forward problem \eqref{biharmoniceq.bdy} seeks $u \in V$ corresponding to a given $f \in V'$ such that
 \begin{equation}\label{weak.forward}
 a(u,v)=\langle f,v \rangle \quad \forall\, v \in V,
 \end{equation}
 where the bilinear form $a(\cdot,\cdot):V\times V\rightarrow \mathbb{R}$ is defined by
 \begin{align}\label{bilinear.a}
& a(v,w):=\int_\O \Delta v \Delta w \dx.
 \end{align}
  The properties regarding the boundedness and coercivity of $a(\cdot,\cdot)$ in $V$ can be easily verified and are stated now: 
  \begin{itemize}
  	\item continuity: there exists a constant $C_b>0$ such that
  	$$ a(v,w) \le C_b\|v\|_2\|w\|_2 \quad \forall\, v, w \in V.$$
  	\item coercivity: there exists a constant $C_c>0$ such that
  	$$a(v,v)\ge C_c\| v\|_{2}^2 \quad \forall\, v \in V.$$
  \end{itemize}
 Hence, by the Lax-Milgram Lemma \cite{evans2010,Kesavan}, there exists a unique solution $u \in V$ to \eqref{weak.forward}. Moreover, the solution $u$ satisfies 
 \begin{equation}\label{stability}
 \| u\|_2 \le C\|f \|_{-2},
 \end{equation}
 where $C>0$ is independent of $f$. 
 
 \begin{remark}
 If $f \in L^2(\O)$, then $f$ can be considered as an element of $V'$ and in that case, we only need to replace the right hand side of \eqref{weak.forward} by $(f,v)$ for all $v \in V$. Also, from \eqref{stability}, we have $\|u\|_2 \le C\|f\|$, as $\|\cdot\|_{-2}$ is weaker than $\| \cdot \|$.
 \end{remark} 
 \medskip
 
 Let $T$ denotes the solution operator of weak formulation of the biharmonic problem \eqref{weak.forward}. That is, $T: V' \rightarrow L^2(\O)$ with its range contained in $V$ is such that 
 \begin{equation}\label{operatorT}
 Tf=u,
 \end{equation}
where $u$ is the solution to \eqref{weak.forward} with the source term $f$. Therefore, given $f \in V'$, \eqref{weak.forward} can be rewritten as
 \begin{equation}\label{weak.forwardoperator}
 a(Tf,v)=\langle f, v\rangle \quad \forall\, v \in V.
 \end{equation}
%
 
 Since the $H^{-2}(\Omega)$ norm on $L^2(\Omega)$ is weaker than $L^2(\Omega)$ norm and the imbedding of $V$ into $L^2(\Omega)$ is compact, it can be shown that $T$ is a compact operator from $L^2(\Omega)$ into itself.  Further, $T:  L^2(\Omega)\to L^2(\Omega)$ is a positive and self adjoint operator with non-closed range.
 
 \smallskip
 
  The minimal elliptic regularity result related to biharmonic equation is stated in the next lemma. 
 \begin{lemma}\cite[Theorem 2]{HBRR}\label{bih_reg_res} Let $\Omega$ be a bounded polygonal domain and for $f \in V'$, let $u \in V$ be the unique solution to \eqref{weak.forward}. If $f \in L^{2}(\O)$, then $u \in H^{2+\gamma}(\Omega)$ and there exists a constant $C>0$ independent of $f$ such that
 	\begin{equation*}
 	\|u\|_{{2+\gamma}}\leq C\|f\|_{{}},
 	\end{equation*}
 	where  $\gamma \in(\half,1]$ is the index of elliptic regularity determined by the interior angles at the corner of the domain $\Omega$ and $\gamma=1$ when $\Omega$ is convex.
 	
 	\smallskip
 	
 	In additon, if $\Omega$ is convex with all the interior angles of $\Omega$ are less than $126.283^\circ$, then $u \in H^4(\Omega)$ and $\|u\|_{{4}}\leq C\|f\|$.	
 \end{lemma}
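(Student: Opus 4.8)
The plan is to reduce the global regularity question to a local analysis at the corners of $\O$, since these are the only points at which the regularity of $u$ can drop below $H^4$. First I would settle interior and smooth-boundary regularity: away from the finitely many vertices the operator $\Delta^2$ has constant coefficients and the boundary is a straight segment, so classical elliptic regularity for the clamped biharmonic problem gives $u\in H^4_{\mathrm{loc}}$ in the interior and up to the open edges, with the associated local bounds by $\|f\|$. A smooth partition of unity subordinate to a cover of $\overline{\O}$ by interior patches, edge patches, and one patch around each vertex then localizes the problem, and only the corner patches require further work.

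Near a corner with interior angle $\omega$ I would pass to polar coordinates $(r,\theta)$ centred at the vertex and invoke the Kondrat'ev--Mellin theory for the biharmonic operator. Seeking homogeneous solutions $u=r^{\lambda}\Phi(\theta)$ of $\Delta^2 u=0$ and using $\Delta(r^{\lambda}\Phi)=r^{\lambda-2}(\Phi''+\lambda^2\Phi)$ twice yields the fourth-order ODE $(\Phi''+\lambda^2\Phi)''+(\lambda-2)^2(\Phi''+\lambda^2\Phi)=0$, whose solution space is spanned by $\cos\lambda\theta$, $\sin\lambda\theta$, $\cos(\lambda-2)\theta$, $\sin(\lambda-2)\theta$. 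The clamped conditions $u=\partial u/\partial n=0$ on the edges $\theta=0$ and $\theta=\omega$ translate into $\Phi(0)=\Phi(\omega)=\Phi'(0)=\Phi'(\omega)=0$, and the vanishing of the $4\times4$ determinant of this homogeneous system is the transcendental characteristic equation whose roots $\lambda$ are the admissible singular exponents. A Mellin transform of the localized equation then represents $u$ near the vertex as a finite sum of singular functions $r^{\lambda_j}\Phi_j(\theta)$ (with possible logarithmic factors at coincident roots) plus a remainder lying in $H^4$.

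The regularity index is read off from the leading exponent. Since $r^{\lambda}$ times a smooth angular profile belongs locally to $H^s$ exactly when $s<\mathrm{Re}\,\lambda+1$, writing $\lambda_1$ for the root of smallest real part exceeding $1$ gives $u\in H^{2+\gamma}$ for every $\gamma<\mathrm{Re}\,\lambda_1-1$, while the accompanying Mellin estimates upgrade this to the stability bound $\|u\|_{2+\gamma}\le C\|f\|$ with $C$ independent of $f$. It then remains to track $\lambda_1(\omega)$: I would show that for $\omega<\pi$ (the convex case) the leading exponent exceeds $2$, which yields $\gamma=1$ and hence $u\in H^3$, and that the critical angle $\omega^\ast\approx 126.283^\circ$ is precisely the value at which $\mathrm{Re}\,\lambda_1$ crosses $3$; for $\omega<\omega^\ast$ no admissible singular exponent obstructs membership in $H^4$, so the remainder absorbs the corner contribution and $u\in H^4$ with $\|u\|_4\le C\|f\|$.

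The main obstacle is the detailed spectral analysis of the characteristic equation: locating the complex zero of the determinant with smallest real part, showing it is simple and varies monotonically with $\omega$, and pinning down the threshold $\omega^\ast$ at which it passes through the value $3$. This root-tracking, together with the uniform Mellin-transform estimates needed to control the regular remainder and to keep $C$ independent of $f$, is the technical heart of the argument and is exactly what is carried out in \cite{HBRR}.
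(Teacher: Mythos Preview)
The paper does not give its own proof of this lemma: it is quoted as \cite[Theorem 2]{HBRR} and used as a black box, so there is nothing in the paper to compare your argument against. Your outline---interior/edge $H^4$ regularity plus a Kondrat'ev--Mellin analysis at each vertex, with the transcendental characteristic equation for the clamped biharmonic operator determining the leading singular exponent $\lambda_1(\omega)$ and hence the admissible $\gamma$---is the standard route and is essentially how the cited reference proceeds. Your identification of the mechanism is correct: $\gamma=1$ in the convex case because $\mathrm{Re}\,\lambda_1>2$ for all $\omega<\pi$, and the $H^4$ threshold $\omega^\ast\approx 126.283^\circ$ is the angle at which $\mathrm{Re}\,\lambda_1$ drops to $3$; the detailed root-tracking you flag as the technical heart is precisely what \cite{HBRR} supplies.
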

Also, the following elliptic regularity result is well explained in \cite[Theorem 2.20]{PolyharmonicBVP_2010} and \cite[Theorem 15.2]{1959_SAADLN}.
\begin{lemma}\label{regularity.generic}
Assume that $\partial{\O}\in \mathcal{C}^m, m\ge 4$. Then for all $f \in H^{m-4}(\O)$, \eqref{weak.forward} admits a unique solution $u \in H^m(\O)\cap H^2_0(\O)$; moreover there exists a constant $C$ independent of $f$ such that $\|u\|_{m} \le C\|f\|_{m-4}.$
\end{lemma}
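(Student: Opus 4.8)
The plan is to separate existence and uniqueness from the regularity estimate, and to obtain the latter from the general theory of elliptic boundary value problems by an induction on $m$. For existence and uniqueness, note that since $m \ge 4$ the space $H^{m-4}(\O)$ embeds continuously into $L^2(\O)$ and hence into $V'$; thus \eqref{weak.forward} already possesses a unique solution $u \in V$ by the Lax--Milgram lemma, and \eqref{stability} gives the crude bound $\|u\|_2 \le C\|f\|_{-2} \le C\|f\|_{m-4}$. It therefore only remains to upgrade this to $u \in H^m(\O)$ together with the estimate $\|u\|_m \le C\|f\|_{m-4}$, the constant being independent of $f$ because the whole argument is linear in $f$.

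The core of the argument is the a priori regularity estimate for the clamped-plate problem, which I would establish by the standard localisation and difference-quotient technique. First one verifies that the pair consisting of the operator $\Delta^2$ and the Dirichlet boundary operators $v \mapsto v|_{\partial\O}$ and $v \mapsto \frac{\partial v}{\partial n}\big|_{\partial\O}$ forms a properly elliptic boundary value problem satisfying the Lopatinskii--Shapiro (complementing) condition; this is where the principal symbol $|\xi|^4$ and the two boundary operators interact, and it is what guarantees the shift estimates. For interior regularity one takes a cut-off $\zeta$ supported in $\O$ and tests \eqref{weak.forward} against suitable difference quotients of $\zeta\,u$, exploiting the coercivity of $a(\cdot,\cdot)$ to bound higher difference quotients of $u$ on compact subsets by $\|f\|_{m-4}$ plus lower-order terms. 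For boundary regularity one flattens $\partial\O$ locally by a $\mathcal{C}^m$ diffeomorphism; since $m \ge 4$ the transformed fourth-order operator has sufficiently smooth coefficients, and tangential difference quotients produce all tangential derivatives, after which the equation $\Delta^2 u = f$ is solved algebraically for the missing normal derivatives. Carrying this out for $m=4$ and then iterating the localisation/difference-quotient argument to gain one order of regularity at a time yields, for each $m$ with $\partial\O \in \mathcal{C}^m$, the a priori bound
$$\|u\|_m \le C\big(\|f\|_{m-4} + \|u\|_{m-1}\big).$$

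Finally, the lower-order term is removed by interpolation: for any $\epsilon>0$ one has $\|u\|_{m-1} \le \epsilon\|u\|_m + C_\epsilon\|u\|_{L^2(\O)}$, so choosing $\epsilon$ small enough to absorb $\epsilon\|u\|_m$ into the left-hand side and using $\|u\|_{L^2(\O)} \le C\|u\|_2 \le C\|f\|_{m-4}$ from the first paragraph gives the asserted estimate $\|u\|_m \le C\|f\|_{m-4}$. I expect the main obstacle to be the boundary regularity step rather than the interior one: verifying the complementing condition and, above all, recovering the normal derivatives near $\partial\O$ (which are not directly reachable by tangential difference quotients) requires care in bookkeeping the transformed coefficients and in checking that the smoothness $\mathcal{C}^m$ of the boundary is exactly what is needed to push the regularity up to order $m$.
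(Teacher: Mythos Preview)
The paper does not prove this lemma at all: it simply states the result and cites \cite[Theorem~2.20]{PolyharmonicBVP_2010} and \cite[Theorem~15.2]{1959_SAADLN} (Agmon--Douglis--Nirenberg) for the proof. Your sketch is correct and in fact outlines precisely the classical ADN approach underlying those references --- proper ellipticity plus the complementing condition for the clamped-plate boundary operators, interior and tangential difference quotients, recovery of normal derivatives from the equation, and absorption of the lower-order term by interpolation --- so there is no genuine discrepancy beyond the fact that the paper defers to the literature while you have reproduced the argument.
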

We make the following assumption based on the above two lemmas which talks about the regularity result of the solution of biharmonic problem.
\begin{assumption}\label{assumption.regularity.forward}
For a given $k \in \N_0=\N \cup \{0\}$, there exists an $s\ge 2$ such that	for every $f \in H^k(\O)$, the solution $u$ to \eqref{weak.forward} is in $H^s(\O)$ and $\|u\|_s \le C\|f\|_k$ where $C>0$ is independent of $f$.
\end{assumption}
Throughout the paper, the symbols $k$ and $s$ indicate the relationship between $f$ and $u$ given by the above assumption.

\subsection{Discrete formulation and Abstract results}\label{sec.abstract.forward}
This section deals with the discrete formulation of \eqref{weak.forward} and a unified convergence analysis.

\smallskip

Let $V_h$ be a finite element space in which the approximate solution to \eqref{weak.forward} is sought. Here, $\O$ is partitioned into elements (e.g. triangles/rectangles) and the approximation space $V_h$ is made of piecewise polynomials of degree atmost $p$ on this partition.

The finite element  formulation corresponding to \eqref{weak.forward} seeks $u_{h} \in V_h$ such that
\begin{align}\label{weak.forward.fem}
& a_h(u_h,v_h)=(f,v_h) \quad \forall\, v_h \in V_h,
\end{align}
where $a_h:(V+V_h) \times (V+V_h) \to \R$ is a bounded linear form with norm $\|\cdot\|_h$. Assume that $a_h(\cdot,\cdot)$ is coercive in $V_h$ with respect to $\|\cdot\|_h$. Thus the discrete problem \eqref{weak.forward.fem} is well-posed and it holds
\begin{equation}\label{stability.fem}
\| u_h\|_h \le C\|f \|,
\end{equation}
where $C>0$ is independent of $f$ and $h$. The choice of $a_h(\cdot,\cdot)$ and $\|\cdot\|_h$ for conforming and nonconforming FEMs are described in Section \ref{sec:fem}.

%
%

\medskip

In the finite dimensional setting, let $T_h$ denotes the finite element solution operator of the biharmonic problem. That is, $T_h: L^2(\O) \rightarrow L^2(\O)$ with its range contained in $V_h$ is such that 
\begin{equation}\label{operatorTh}
T_h f=u_h,
\end{equation}
where $u_h$ is the solution to \eqref{weak.forward.fem} with source $f \in L^2(\O)$. Therefore, \eqref{weak.forward.fem} can be rewritten as
\begin{equation}\label{weak.forwardoperator.ncfem}
a_h(T_h f,v_h)=(f,v_h) \quad \forall\, v_h \in V_h.
\end{equation}
 In the following, the notation $a\lesssim b$ means there exists a generic mesh and source independent constant $C$ such that $a\leq Cb$ unless otherwise specified. 

\medskip

We prove the error of the forward problem in an abstract way under the following assumption.
\begin{assumption}\label{assumption.forwardoperators}
There exists {\it interpolation operator} $I_h \in L(V,V_h)$ and {\it enrichment operator} $E_h \in L(V_h,V)$ with following properties that lead to non-negative parameters $\delta_1,\delta_2,\delta_3,\delta_4$ and $\delta_5$.
\begin{itemize}
	\item[\property{1}]
For $v\in V\cap H^{s}(\Omega)$, $\sup_{\|v\|_s=1}\|v-I_hv\|_{h}=\delta_1(s).$

\item[\property{2}] For any $v_{h}, w_h \in V_h$ and $\chi \in V \cap H^s(\O)$,
\begin{itemize}
\item [$(a)$]
$ \| v_{h}-E_hv_{h}\|\le \delta_2\dist(v_h,V).$
\item [$(b)$] $\|v_{h}-E_hv_{h}\|_h\le \delta_3\dist(v_h,V). $
\item[$(c)$] $\sup_{\|v_h\|_h=1}\sup_{\|\chi\|_s=1}|a_{h}(v_h-E_{h}v_{h},\chi)|= \delta_4.$
\item[$(d)$] $\sup_{\|v_h\|_h=1}\sup_{\|w_h\|_h=1}|a_h(v_h-E_hv_h,w_h)|= \delta_5.$
\end{itemize}
\end{itemize}	
\end{assumption}
For instance, for conforming FEMs, there exists an interpolation operator $I_h$ with $\delta_1(s)\lesssim h^{\min\{p+1,\;s\}-2}$ and the enrichment operator $E_h$ is the identity operator $\rm{Id}$. Hence, in this case, the properties \property{1} and \property{2}$(a)-(d)$ are satisfied. We will discuss these properties in details for conforming and nonconforming FEMs in Section \ref{sec:fem}.

\medskip

Define 
\begin{equation}\label{defn.betas}
\beta(s):=\delta_1(s)+\delta_2+\delta_4.
\end{equation}
The next theorem discusses the error estimate for $Tf$ in $H^2$ norm that is useful to prove the error estimate of the forward direction of the problem that we are interested in, see Theorem \ref{thm.forwarderror} below.
\begin{theorem}[$H^2$ error estimate]\label{thm.uerror}
	Let $f \in H^k(\O)$ and let $T$ and $T_h$ be the solution operators introduced in \eqref{operatorT} and \eqref{operatorTh}, respectively. Then, under Assumption \ref{assumption.regularity.forward}, \property{1} and \property{2}$(a).(c)$ in Assumption \ref{assumption.forwardoperators}, there exists a constant $C>0$ independent of $f$ and $h$ such that
	\begin{equation}
	\| Tf-T_{h} f\|_{h} \le C  \beta(s)\|f\|_k,
	\end{equation}
	where $\beta(s)$ is as in \eqref{defn.betas}.
\end{theorem}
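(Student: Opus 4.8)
The plan is to follow the standard nonconforming-FEM error argument (a Strang/medius-type estimate) built around the interpolation operator $I_h$ and the enrichment operator $E_h$. Set $u:=Tf$ and $u_h:=T_hf$, so that $u\in V\cap H^s(\O)$ by Assumption \ref{assumption.regularity.forward}, with $\|u\|_s\lesssim\|f\|_k$. First I would insert the interpolant and use the triangle inequality,
$$\|Tf-T_hf\|_h\le \|u-I_hu\|_h+\|I_hu-u_h\|_h,$$
bounding the first term immediately by \property{1} and the regularity estimate, namely $\|u-I_hu\|_h\le\delta_1(s)\|u\|_s\lesssim\delta_1(s)\|f\|_k$. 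The bulk of the work is the second term.

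Writing $w_h:=I_hu-u_h\in V_h$, coercivity of $a_h$ on $V_h$ gives $C_c\|w_h\|_h^2\le a_h(w_h,w_h)$, and since $a_h(u_h,w_h)=(f,w_h)$ I would expand
$$a_h(w_h,w_h)=a_h(I_hu-u,w_h)+\bigl(a_h(u,w_h)-(f,w_h)\bigr).$$
The first summand is controlled by boundedness of $a_h$ together with \property{1}, giving $|a_h(I_hu-u,w_h)|\lesssim\delta_1(s)\|u\|_s\|w_h\|_h$. The second summand is the consistency error, which is the crux. To handle it I would push $w_h$ into $V$ via the enrichment operator: since $E_hw_h\in V$, the continuous equation \eqref{weak.forwardoperator} yields $a(u,E_hw_h)=(f,E_hw_h)$, and using that $a_h$ agrees with $a$ on $V\times V$ I would rewrite
$$a_h(u,w_h)-(f,w_h)=a_h(u,w_h-E_hw_h)+(f,E_hw_h-w_h).$$
For the first term, symmetry of $a_h$ and \property{2}$(c)$ give $|a_h(u,w_h-E_hw_h)|=|a_h(w_h-E_hw_h,u)|\le\delta_4\|w_h\|_h\|u\|_s$; for the second, \property{2}$(a)$ together with $\dist(w_h,V)\le\|w_h\|_h$ (take $0\in V$) gives $|(f,E_hw_h-w_h)|\le\delta_2\|f\|\,\dist(w_h,V)\le\delta_2\|f\|\|w_h\|_h$.

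Collecting these bounds, dividing by $\|w_h\|_h$, and using $\|u\|_s\lesssim\|f\|_k$ and $\|f\|\le\|f\|_k$ yields $\|w_h\|_h\lesssim(\delta_1(s)+\delta_2+\delta_4)\|f\|_k=\beta(s)\|f\|_k$; combined with the interpolation bound this closes the argument. I expect the main obstacle to be the consistency term $a_h(u,w_h)-(f,w_h)$: because $w_h\notin V$ in the nonconforming case this term does not vanish, and the whole role of $E_h$ and \property{2} is to split it into two pieces measuring, respectively, how far $a_h(u,\cdot)$ sees the nonconformity ($\delta_4$) and how far $w_h$ is from its enrichment in $L^2$ ($\delta_2$). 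Two structural facts I am quietly relying on and would flag explicitly are that $a_h$ is symmetric (so that \property{2}$(c)$, stated with the discrete argument first, applies to $a_h(u,w_h-E_hw_h)$) and that $a_h$ restricts to $a$ on $V\times V$; both hold for the biharmonic form $a_h(v,w)=\sum_{K}\int_K\Delta v\,\Delta w$.
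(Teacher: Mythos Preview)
Your argument is correct and essentially coincides with the paper's proof: the same triangle inequality with $I_hu$, the same coercivity step on $w_h$, the same splitting $a_h(w_h,w_h)=a_h(I_hu-u,w_h)+\bigl(a_h(u,w_h)-(f,w_h)\bigr)$, and the same treatment of the consistency term via $E_hw_h$ using \property{2}$(a)$ and \property{2}$(c)$. The only cosmetic differences are that the paper takes $w_h=u_h-I_hu$ (opposite sign) and leaves implicit the two structural points you rightly flag, namely symmetry of $a_h$ (to flip the arguments in \property{2}$(c)$) and $a_h|_{V\times V}=a$; your explicit mention of $\dist(w_h,V)\le\|w_h\|_h$ is also tacit in the paper.
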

\begin{proof}
	Let $u=Tf$ and $u_h = T_h f$. The triangle inequality with  $I_h u$ and \property{1} in Assumption \ref{assumption.forwardoperators} lead to
	\begin{align}\label{h2errorIM}
	\|u-u_h\|_h &\le \|u-I_h u\|_h+\|I_h u-u_h\|_h\lesssim\delta_1(s)\|u\|_{s}+\|I_h u-u_h\|_h.
	\end{align}
Let $w_h=u_h-I_h u$. A Cauchy-Schwarz inequality and \property{1} in Assumption \ref{assumption.forwardoperators} show that
	\begin{align}\label{wh.interpolation}
		\|w_h\|_h^2&=a_h(u_h-u,w_h)+a_h(u-I_h u,w_h)\lesssim a_h(u_h-u,w_h)+ \delta_1(s)\|u\|_{s}\|w_h\|_h.
	\end{align} The definitions of $a_h(\cdot,\cdot)$, and $a(\cdot,\cdot)$, the Cauchy-Schwarz inequality and \property{2}$(a).(c)$ in Assumption \ref{assumption.forwardoperators} imply
	\begin{align*}
	a_h(u_h-u,w_h)&=(f,w_h)-a_h(u,w_h)\\
	&=(f,w_h)-a_h(u,w_h -E_h w_h)) -a_h(u,E_h w_h)\\
	&=(f,w_h-E_h w_h)-a_h(u,w_h-E_h w_h))\lesssim \big(\delta_2\|f\|+\delta_4\|u\|_{s}\big)\|w_h \|_h.
	\end{align*}
This and \eqref{wh.interpolation} prove $\|I_h u-u_h\|_h \lesssim \delta_2\|f\|+(\delta_1(s)+\delta_4)\|u\|_{s}$. The combination of this, Assumption \ref{assumption.regularity.forward} and \eqref{h2errorIM} concludes the proof.
\end{proof}
\subsubsection{Measurement model and error estimate}\label{sec.measurement}
This section presents the measurement model and is followed by the approximation error of the forward problem. The inverse problem is to determine stable approximations for the source field $f$ of the biharmonic equation, provided the  displacement field $u$ is given only at a finite number of locations. We assume that $u$ can only be measured using a finite set of sensors, and that the output of those sensors can be modeled as continuous linear functionals on the field $u$. The measurement $m \in \R^N$ is thus modeled as
\begin{align}\label{measurement}
m=\begin{pmatrix}
	\langle \phi_1,u\rangle\\\
	 \langle \phi_2,u \rangle\\
	\cdot\\
	\cdot\\
	\cdot\\
	\langle \phi_N,u \rangle \\
\end{pmatrix}=\Phi u,
\end{align}
where the measurement functionals $u \rightarrow \langle \phi_i,u\rangle $ are assumed to be continuous in $V$, i.e, $\phi_i \in V'$ for $i=1,\cdots,N$. Here, $u$ is the solution to \eqref{weak.forward} with some source $f$. Thus, for a given source $f$, the measurement $m$ can be recast as 
\begin{equation}\label{measurement.f}
m=\Phi Tf.
\end{equation}
This is now the forward direction of the problem we are particularly interested in. The corresponding finite element approximation of this forward problem is
\begin{equation}\label{measurement.cfem}
m_h=\Phi T_hf.
\end{equation}
For any $v \in V$,
$$\| \Phi v \| \le \left(\sum_{i=1}^{N}\| \phi_i \|_{-2}^2\right)^{1/2}\| v \|_2,$$
where $\|\cdot\|$ denotes the Euclidean norm on $\R^N$.  Since the functionals $\phi_i$ are fixed for a particular problem, we include their contribution as a constant $C$ and write the last inequality as
\begin{equation*}
\| \Phi v \| \le C \| v\|_2.
\end{equation*}
This, \eqref{stability} and \eqref{stability.fem} imply
\begin{equation}\label{stability.measurement}
\| \Phi Tf \| \le C\| Tf \|_2 \le C \|f \| \mbox{ and } \| \Phi T_hf \| \le C\| T_hf \|_2 \le C \|f \|.
\end{equation}
Note that the above $C$ depends on $N$, but independent of  $f$.

\begin{remark}
Even though the constant $C$ appearing in \eqref{stability.measurement} depends on $N$, we can get rid of the dependency of $N$ in the proof of the boundedness of $\Phi$ by defining a new norm on $\R^N$. For that, let $w_1,\cdots,w_N$ be such that $\sum_{i=1}^{N}w_i \le C_0$ for all $N \in \N$ and for $a=(a_i)_{i=1}^N$ and $b=(b_i)_{i=1}^N$ in $\R^N$, let $\langle a, b\rangle_w:=\sum_{i=1}^N w_ia_ib_i$. This implies $\| a\|_w^2=\sum_{i=1}^N w_ia_i^2$. Hence,
$$\| \Phi v\|_w^2=\sum_{i=1}^Nw_i |\langle \phi_i,v \rangle|^2 \le (\sum_{i=1}^Nw_i\| \phi_i \|^2)\| v\|^2.$$
Assume that for all $N \in \N$, there exists a $\beta>0$ such that $\|\phi_i\| \le \beta$ for all $i=1,\cdots,N$. Then, we have $\| \Phi v\|_w^2 \le \beta^2C_0\| v\|_2^2.$

\end{remark}

 To establish the error estimates, consider the auxiliary problem that seeks $\xi_i \in V$ such that
\begin{equation}\label{aux}
a(\xi_i,v)=(\phi_i,v)\quad \forall\, v \in V,\,\; i=1,\cdots,N.
\end{equation}
This and the definition of the solution operator given in \eqref{operatorT} lead to
$$T\phi_i=\xi_i \quad i=1,\cdots,N.$$

\begin{assumption}[Measurement function regularity]\label{assumption}
Given the measurement functionals $u \rightarrow (\phi_i,u)$, the measurement functions $\xi_i$ are in $H^{r}(\O)$, $r\ge 2$ and fixed for the particular problem. 
\end{assumption}
Define
\begin{equation}\label{def.widetilde}
\widetilde{\beta}(r,s):=\delta_1(r)\beta(s)(1+\delta_3)+(\delta_2+\delta_5)(1+\delta_1(r)).
\end{equation}
\begin{theorem}[Approximation error of the forward problem]\label{thm.forwarderror} Let $f \in H^k(\O)$. Let $u \in H^2_0(\O)\cap H^{s}(\O)$ solves \eqref{weak.forward} and $u_h \in V_h$ be the corresponding finite element solution to \eqref{weak.forward.fem}. Under Assumptions \ref{assumption.regularity.forward}, \ref{assumption.forwardoperators} and \ref{assumption}, there exists a constant $C(N)>0$ independent of $h$ and $f$ such that 
	$$\| \Phi Tf -\Phi T_h f\| \le C(N)\widetilde{\beta}(r,s)\| f\|_{k}.$$
\end{theorem}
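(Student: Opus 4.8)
The plan is to bound the Euclidean norm componentwise and reduce everything to a single duality estimate. Writing $u=Tf$ and $u_h=T_hf$, the $i$-th entry of $\Phi Tf-\Phi T_hf$ is $(\phi_i,u-u_h)$, so $\|\Phi Tf-\Phi T_hf\|\le\sqrt N\,\max_i|(\phi_i,u-u_h)|$. Since the measurement functions $\xi_i=T\phi_i$ are fixed for the problem with $\|\xi_i\|_r$ bounded by Assumption \ref{assumption}, it suffices to estimate one $|(\phi_i,u-u_h)|$ and absorb the dependence on $N$ and on $\|\xi_i\|_r,\|\phi_i\|$ into $C(N)$. The argument is a goal-oriented duality built on the auxiliary problem \eqref{aux}, which gives $a(\xi_i,v)=(\phi_i,v)$ for all $v\in V$ and, after discretisation, $a_h(\xi_{i,h},v_h)=(\phi_i,v_h)$ for all $v_h\in V_h$.

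First I would represent the error. Using $\xi_i,u,E_hu_h\in V$ and $a=a_h$ on $V\times V$, the auxiliary problem yields $(\phi_i,u)=a_h(\xi_i,u)$ and $(\phi_i,E_hu_h)=a_h(\xi_i,E_hu_h)$, hence
\[
(\phi_i,u-u_h)=\underbrace{a_h(\xi_i,u-E_hu_h)}_{A}+\underbrace{(\phi_i,E_hu_h-u_h)}_{B}.
\]
The term $B$ is immediate from \property{2}$(a)$ with the crude bound $\dist(u_h,V)\le\|u_h\|_h$ and \eqref{stability.fem}, giving $|B|\lesssim\delta_2\|f\|$. For $A$ I insert the interpolant $I_h\xi_i$ and split $A=a_h(\xi_i-I_h\xi_i,u-E_hu_h)+a_h(I_h\xi_i,u-E_hu_h)=:A_1+A_2$. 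The principal term $A_1$ is handled by Cauchy--Schwarz in $\|\cdot\|_h$, \property{1} at regularity $r$, and the triangle inequality with \property{2}$(b)$ (using $\dist(u_h,V)\le\|u-u_h\|_h$) to get $\|u-E_hu_h\|_h\le(1+\delta_3)\|u-u_h\|_h$, which by Theorem \ref{thm.uerror} is $\lesssim(1+\delta_3)\beta(s)\|f\|_k$; thus $A_1\lesssim\delta_1(r)(1+\delta_3)\beta(s)\|f\|_k$, matching the first summand of $\widetilde\beta(r,s)$.

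The delicate term is $A_2=a_h(I_h\xi_i,u-E_hu_h)$, and this is where the structure of $\widetilde\beta$ must be respected. Writing $u-E_hu_h=(u-u_h)+(u_h-E_hu_h)$, the second piece is handled by symmetry of $a_h$ and \property{2}$(d)$, contributing $\lesssim\delta_5(1+\delta_1(r))\|f\|_k$. For $a_h(I_h\xi_i,u-u_h)$ I would use both primal equations, $a_h(I_h\xi_i,u_h)=(f,I_h\xi_i)$ and $a(E_hI_h\xi_i,u)=(f,E_hI_h\xi_i)$, to obtain the consistency identity
\[
a_h(I_h\xi_i,u-u_h)=a_h(I_h\xi_i-E_hI_h\xi_i,\,u)-(f,I_h\xi_i-E_hI_h\xi_i).
\]
I expect the main obstacle to be exactly the first term here: estimating $a_h(I_h\xi_i-E_hI_h\xi_i,u)$ naively by \property{2}$(c)$ produces a bare $\delta_4\|u\|_s$ contribution, which is \emph{not} controlled by $\widetilde\beta$ (indeed $\delta_4$ enters $\widetilde\beta$ only through $\beta(s)$ via Theorem \ref{thm.uerror}). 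The remedy, and the heart of the proof, is to split $u=(u-u_h)+u_h$ once more: the piece against $u-u_h$ is bounded by Cauchy--Schwarz with \property{2}$(b)$ and the sharp distance estimate $\dist(I_h\xi_i,V)\le\|\xi_i-I_h\xi_i\|_h\le\delta_1(r)\|\xi_i\|_r$, giving $\lesssim\delta_3\delta_1(r)\beta(s)\|f\|_k$; the piece against $u_h$ is bounded by \property{2}$(d)$, giving $\lesssim\delta_5(1+\delta_1(r))\|f\|_k$; and $(f,I_h\xi_i-E_hI_h\xi_i)$ is bounded by \property{2}$(a)$ with the same distance estimate, giving $\lesssim\delta_2\delta_1(r)\|f\|_k$. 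Collecting the bounds for $A_1$, $A_2$ and $B$ reproduces precisely $\delta_1(r)\beta(s)(1+\delta_3)+(\delta_2+\delta_5)(1+\delta_1(r))=\widetilde\beta(r,s)$ from \eqref{def.widetilde}, and the factor $\sqrt N$ together with the bounds on $\|\xi_i\|_r$ yields the constant $C(N)$, completing the proof.
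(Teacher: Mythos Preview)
Your proof is correct and is essentially the same argument as the paper's, with the two splitting operations (insert $I_h\xi_i$ into the $\xi_i$-slot; decompose $u-E_hu_h=(u-u_h)+(u_h-E_hu_h)$) performed in the opposite order. The paper writes $a(\xi_i,u-E_hu_h)=T_1+T_2+T_3$ by first splitting the second argument and then inserting $I_h\xi_i$ and $E_hI_h\xi_i$ into $T_1$; you first insert $I_h\xi_i$ to get $A_1+A_2$ and then split the second argument inside $A_2$. The atomic pieces and their bounds coincide (your $A_1$ covers the paper's first summand of $T_1$ together with $T_2$ in a single Cauchy--Schwarz step via $\|u-E_hu_h\|_h\le(1+\delta_3)\|u-u_h\|_h$, while the remaining pieces match term by term), and both routes assemble precisely $\widetilde\beta(r,s)$.
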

\begin{proof}
	Let $u=Tf$ and $u_h=T_h f$. For $i=1,\cdots,N$, \eqref{aux}, a Cauchy Schwarz inequality and \property{2}.$(a)$ in Assumption \ref{assumption.forwardoperators}, \eqref{stability.fem} and \eqref{stability.measurement} show that
\begin{align}\label{phiu-uh}
(\phi_i,u-u_h)&=(\phi_i,u-E_h u_h)+(\phi_i,E_h u_h-u_h)\nonumber\\
&\lesssim a(\xi_i,u-E_h u_h)+\delta_2\|\phi_i\|\|u_h\|_h\lesssim a(\xi_i,u-E_h u_h)+\delta_2\|f\|.
\end{align}	
The first term in the right hand side of \eqref{phiu-uh} can be rewritten as
\begin{align}\label{forwarderr}
a(\xi_i,u-E_h u_h)&=a_h(\xi_i,u-u_h)+a_h(\xi_i-I_h \xi_i, u_h-E_h u_h)+a_h(I_h \xi_i, u_h-E_h u_h)\nonumber \\
&=:T_1+T_2+T_3.
\end{align}
Cauchy-Schwarz inequalities, \property{1} and \property{2}$(b)$ in Assumption \ref{assumption.forwardoperators}, and  Theorem \ref{thm.uerror} prove
\begin{align}
T_1&= a_h(\xi_i-I_h \xi_i,u-u_h)+a_h(I_h \xi_i-E_h I_h \xi_i,u- u_h)+a_h(E_h I_h \xi_i,u- u_h)\nonumber\\
&\lesssim (\|\xi_i-I_h \xi_i\|_h +\|I_h\xi_i-E_hI_h \xi_i\|_h) \|u-u_h\|_h+a_h(E_hI_h \xi_i,u- u_h)\nonumber \\
&\lesssim \delta_1(r)\beta(s)(1+\delta_3)\|f\|_k\|\xi_i\|_r+a_h(E_h I_h \xi_i,u- u_h)\label{t1.forward}
\end{align}
with $\|I_h\xi_i-E_hI_h \xi_i\|_h\le \delta_3\|I_h\xi_i-\xi_i\|_h\le \delta_1(r)\delta_3\|\xi_i\|_r$ in the end. The triangle inequality with $\xi_i$ and \property{1} in Assumption \ref{assumption.forwardoperators} verify $\|I_h\xi_i\|_h \lesssim (1+\delta_1(r))\|\xi_i\|_r$. This, the definitions of $a(\cdot,\cdot)$ and $a_h(\cdot,\cdot)$, \property{1} and \property{2}$(a).(d)$ in Assumption \ref{assumption.forwardoperators}, and \eqref{stability.fem} lead to
\begin{align*}
a_h(E_h I_h \xi_i,u- u_h)
&=a( E_h I_h \xi_i,u)-a_h(E_h I_h \xi_i -I_h \xi_i, u_h)-a_h(I_h \xi_i, u_h)\\
&=(f,E_h I_h \xi_i-I_h \xi_i)-a_h(E_h I_h \xi_i -I_h \xi_i, u_h)\nonumber\\
&\lesssim \delta_1(r)\delta_2\|f\|\|\xi_i\|_{r}+\delta_5\| I_h\xi\|_h\|u_h\|_h\nonumber\\
& \lesssim (\delta_1(r)\delta_2+\delta_5(1+\delta_1(r)))\|f\|\|\xi_i\|_{r}.
\end{align*}
Consequently, \eqref{t1.forward} implies
\begin{align}
T_1&\lesssim \big(\delta_1(r)\beta(s)(1+\delta_3)+\delta_1(r)\delta_2+\delta_5(1+\delta_1(r))\big)\|f\|_k\|\xi_i\|_r.\label{t1.forward1}
\end{align}
A Cauchy Schwarz inequality, \property{1} and \property{2}$(b)$ in Assumption \ref{assumption.forwardoperators}, and Theorem \ref{thm.uerror} prove
\begin{align}
T_2&\lesssim \|\xi_i-I_h \xi_i \|_h\|u_h-E_h u_h \|_h\lesssim \delta_1(r)\delta_3\|\xi_i\|_r\|u_h-u \|_h  \lesssim \delta_1(r)\beta(s)\delta_3\|f\|_k\|\xi_i\|_r.\label{t2.forward}
\end{align}
Since $\|I_h\xi_i\|_h \lesssim (1+\delta_1(r))\|\xi_i\|_r$, \property{2}$(d)$ in Assumption \ref{assumption.forwardoperators} shows
\begin{equation}\label{t3.forward}
T_3 \lesssim \delta_5\|I_h\xi_i\|_h\|u_h\|_h \lesssim \delta_5(1+\delta_1(r))\|\xi_i\|_r\|f\|_k
\end{equation}
with \eqref{stability.fem} in the last step. The combination of \eqref{t1.forward1}-\eqref{t3.forward} in \eqref{forwarderr} reads
$$a(\xi_i,u-E_h u_h)\lesssim \big(\delta_1(r)(\delta_2+\beta(s)(1+\delta_3))+\delta_5(1+\delta_1(r))\big)\|f\|_k\|\xi_i\|_{r}.$$ This with \eqref{phiu-uh} result in
$$(\phi_i, u-u_h) \lesssim \big(\delta_1(r)\beta(s)(1+\delta_3)+(\delta_2+\delta_5)(1+\delta_1(r))\big)\|f\|_k.$$
Consequently, the desired estimate follows from the definition of $\Phi$.
\end{proof}

\subsection{Finite Element Methods} \label{sec:fem}
The applications of the results in Section \ref{sec.forwardproblem} to various schemes are discussed in this section.

\smallskip

Let $\cT_h$ be a regular and conforming triangulation of $\overline{\Omega}$ into closed triangles, rectangles or quadrilaterals. Set $h_K=$ diameter of $K \; \;  \forall\, K \in \cT_h $ and define the discretization parameter $h:=\max_{K\in\cT_h}h_K$. 

\subsubsection{Conforming FEMs} \label{sec.cfem.forward}
Two examples of conforming finite elements, namely the Argyris triangle and \BFS rectangle, are presented below. For conforming finite elements, the finite element space $V_\C$ is a subspace of the underlying Hilbert space $V$.

\medskip

$\bullet$ \textsc{The Argyris triangle  \cite{ciarlet1978finite}:}
The Argyris triangle  (Figure ~\ref{fig.cfem}, left) is a triplet $(K,\cP_K, \Sigma_K)$ where $K$ is a triangle with vertices $a_1,\,a_2,\,a_3$ and $a_{ij}=\frac{1}{2}(a_i+a_j),\, 1\le i <j \le 3$ denote the midpoints of the edges of $K$, $\cP_K$ = $\cP_5(K)$, space of all polynomials of degree $\le$ 5 in two variables defined on
$K$ (dim $\cP_K=21$), and $\Sigma_K$ denotes the degrees of freedom given by: for $p \in \cP_K$,
\[
\begin{aligned}
\Sigma_K=\bigg\{p(a_i),\, \partial_1 p(a_i),\,& \partial_2 p(a_i),\,\partial_{11} p(a_i), \,\partial_{12}p(a_i),\, \partial_{22} p(a_i),\, 1\le i \le 3;\\
& \frac{\partial p}{\partial n}(a_{ij}), \,1\le i <j \le 3\bigg\}.
\end{aligned}
\]

\begin{figure}
	\begin{center}
		\begin{minipage}[H]{0.25\linewidth}
			{\includegraphics[width=3.3cm]{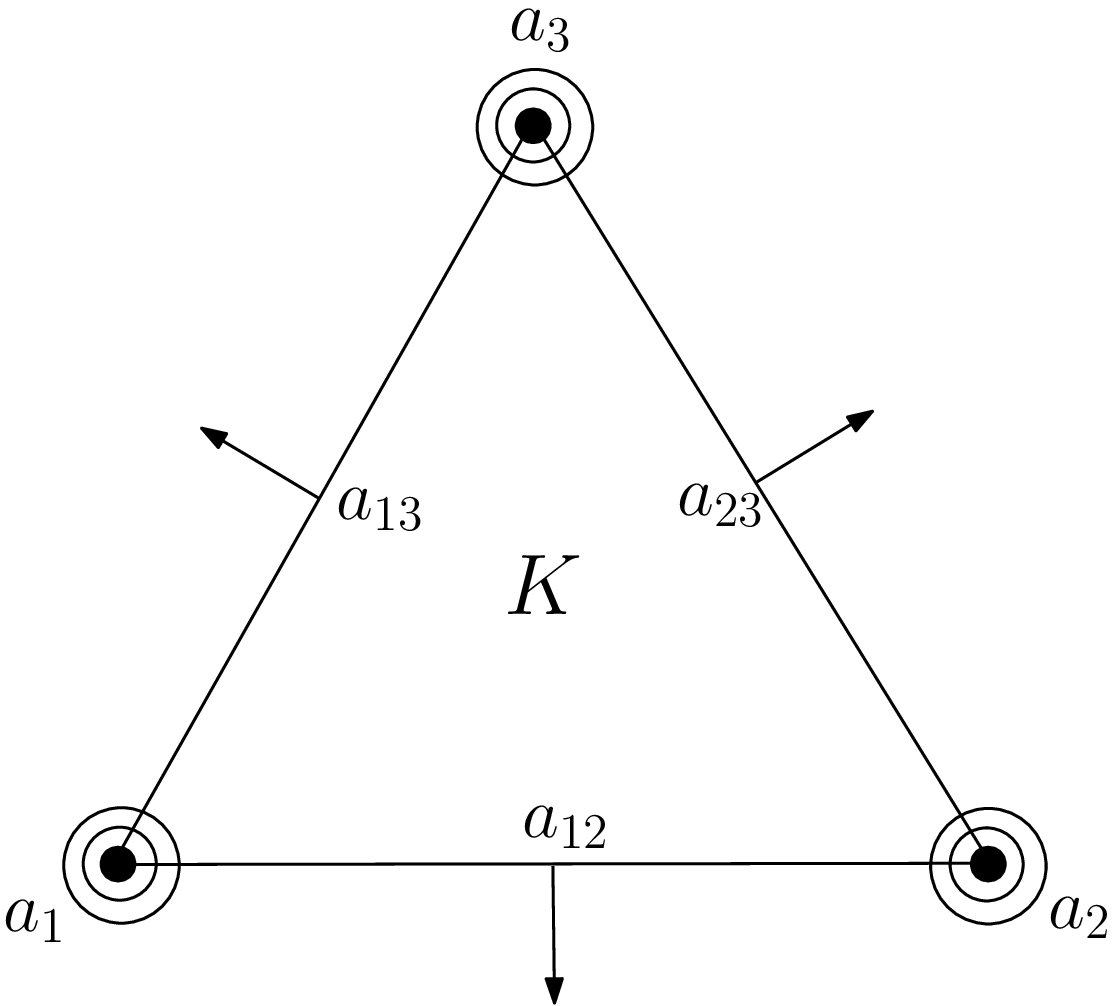}}
		\end{minipage}
		\begin{minipage}[H]{0.25\linewidth}
			{\includegraphics[width=3.7cm]{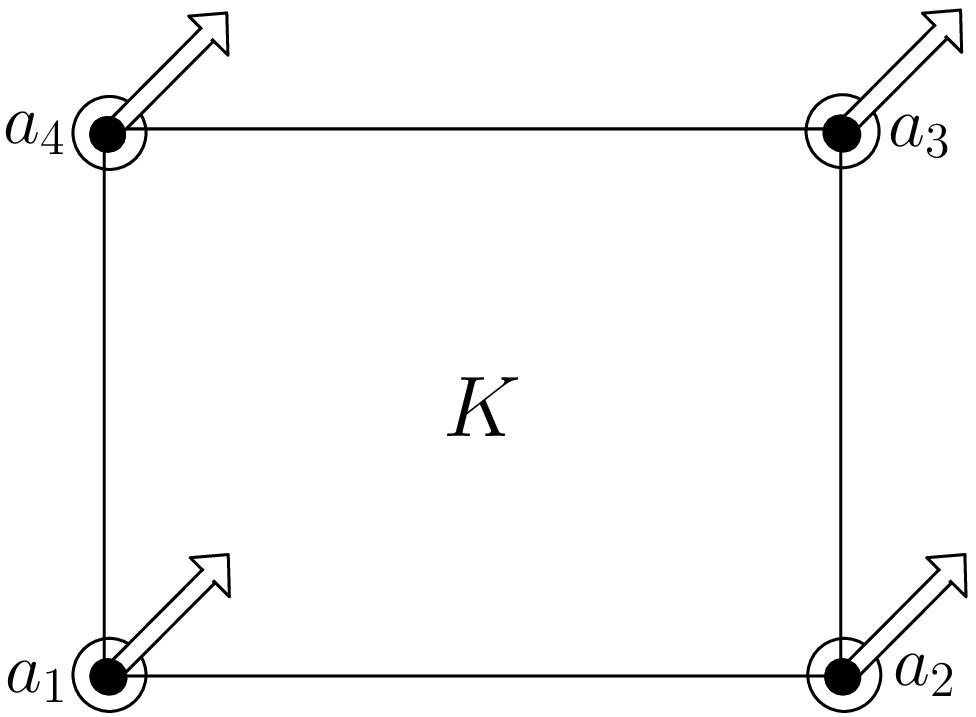}}
		\end{minipage}
		\begin{minipage}[H]{0.25\linewidth}
		{\includegraphics[width=3.3cm]{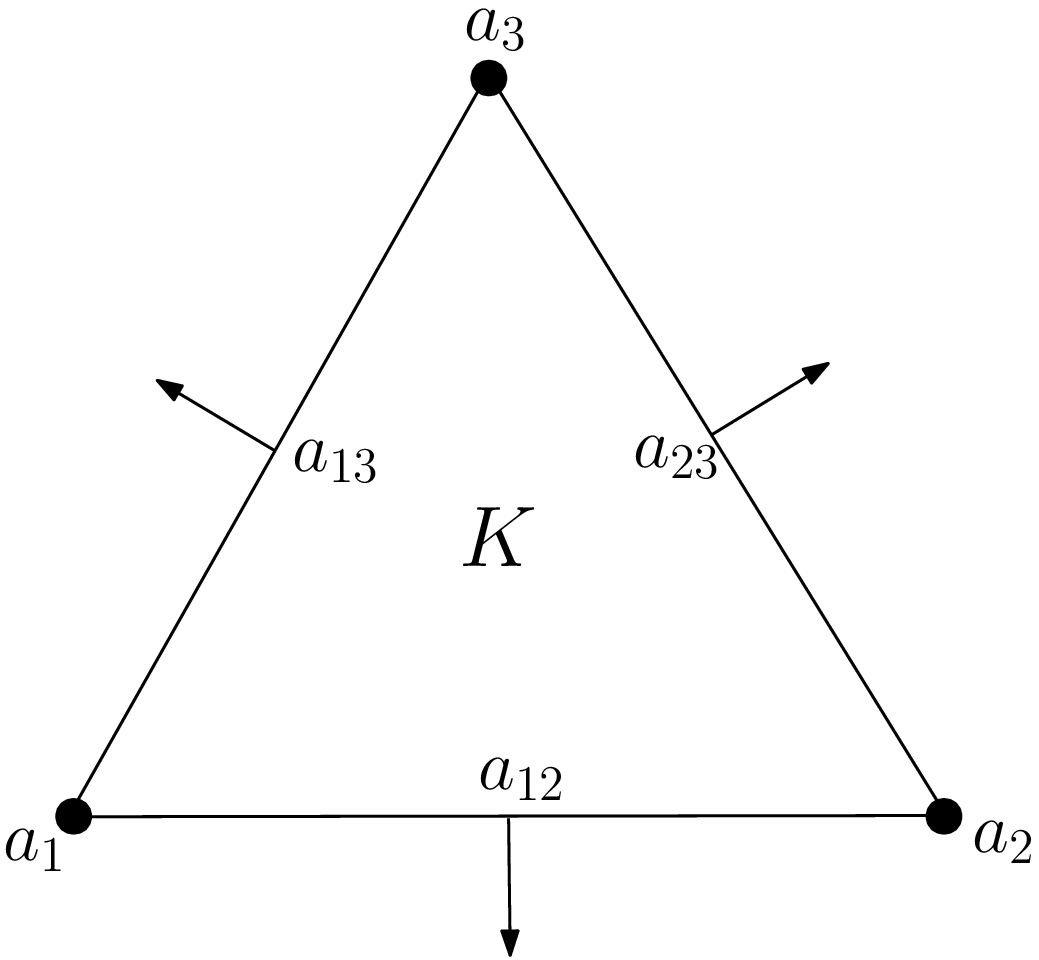}}
	\end{minipage}
		\caption{Argyris triangle (left), Bogner-Fox-Schmit rectangle (center) and Morley triangle (right) }\label{fig.cfem}
	\end{center}
\end{figure}

$\bullet$ \textsc{Bogner-Fox-Schmit rectangle \cite{ciarlet1978finite}: }The Bogner-Fox-Schmit rectangle (Figure ~\ref{fig.cfem}, center) is a triplet $(K,\cP_K, \Sigma_K)$ where $K$ is a rectangle with vertices $a_i, \, 1 \le i \le 4$, $\cP_K$ = $Q_3(K)$, the polynomials of degree $\le$ 3 in both variables (dim $\cP_K=16$), and $\Sigma_K$ is given by:
$$	\Sigma_K=\big\{p(a_i),\,\partial_1 p(a_i), \,\partial_2 p(a_i),\,\partial_{12}p(a_i),\, 1\le i \le 4 \big\}.$$

The conforming finite element spaces associated with \BFS  and Argyris elements are contained in
$ C^1(\overline\Omega)\cap H^2(\Omega)$. Define
\begin{align*}
V_\C&=\left\{v_\C\in C^1(\overline\Omega):v_\C|_{K}\in \cP_K \quad \forall\, K\in\cT_h \text{ with }v_\C|_{\partial\Omega}=0, \frac{\partial v_\C}{\partial n}\big{|}_{\partial\Omega}=0\right\}\subset  V,
\end{align*}
where
\begin{align*} 
\cP_K &= \left\{ 
\begin{array}{l l}
\cQ_3(K)& \; \text{for \BFS element}, \\
\cP_5(K) & \; \text{for Argyris element.}
\end{array} \right.
\end{align*}
The following lemma talks about the interpolation estimates for the \BFS or Argyris FEMs.

\begin{lemma}[Interpolant]\cite{ciarlet1978finite}\label{interpolant.cfem} For $v\in V \cap H^{s}(\Omega)$,
	\begin{align*}
	&\|v-I_\C v\|_{\ell} \leq C h^{\min\{p+1,\;s\}-\ell}\|v\|_{s} \qquad\text{ for } \ell=0,1,2,
	\end{align*}
	where $p=3$ (resp. $5$) for the \BFS element  (resp. Argyris element), and $C>0$ is independent of $h$ and $v$.
\end{lemma}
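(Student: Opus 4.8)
The plan is to prove the estimate locally on each element and then sum, using the classical Bramble--Hilbert lemma together with a scaling argument to a reference element. Since the \BFS and Argyris elements are $C^1$-conforming, the global interpolant is obtained by gluing the local interpolants $I_K v := (I_\C v)|_K$ defined through the degrees of freedom $\Sigma_K$, and the matching of traces and normal derivatives across interelement edges guarantees $I_\C v \in V_\C \subset V$. Thus, for $\ell \le 2$, the broken $H^\ell$-norm coincides with $\|\cdot\|_\ell$, and it suffices to establish the element-wise bound
\[
\|v - I_K v\|_{\ell,K} \lesssim h_K^{\min\{p+1,s\}-\ell}\,\|v\|_{s,K}, \qquad \ell=0,1,2,
\]
for each $K \in \cT_h$; squaring, summing over $K$, and using $h_K \le h$ then yields the claim.

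For the local estimate I would transform to a fixed reference triangle/rectangle $\hat K$ via the affine map $F_K(\hat{\x}) = B_K \hat{\x} + b_K$, with shape-regularity giving $\|B_K\| \lesssim h_K$ and $\|B_K^{-1}\| \lesssim h_K^{-1}$. Writing $\hat v = v \circ F_K$ and denoting by $\hat I$ the reference interpolation operator, the decisive algebraic property is polynomial reproduction: $\hat I q = q$ for all $q \in \cP_p(\hat K)$, where $p=3$ for \BFS (using $\cP_3 \subset \cQ_3$) and $p=5$ for Argyris. Hence $\hat I - \mathrm{Id}$ annihilates $\cP_p(\hat K)$, and the Bramble--Hilbert lemma, together with the boundedness of $\hat I$ on $H^s(\hat K)$, gives $\|\hat v - \hat I \hat v\|_{\ell,\hat K} \lesssim |\hat v|_{\min\{p+1,s\},\hat K}$. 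Mapping back and tracking the powers of $B_K$ and of the Jacobian in the chain rule converts the reference seminorm into the $h_K$-weighted physical quantity, producing exactly the factor $h_K^{\min\{p+1,s\}-\ell}$; bounding the remaining seminorm by $\|v\|_{s,K}$ finishes the local estimate.

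The $\min\{p+1,s\}$ I would treat in two regimes: when $s \ge p+1$ the Bramble--Hilbert lemma is applied at order $p+1$ to obtain the optimal rate $h_K^{p+1-\ell}$, while for $2 \le s < p+1$ (including noninteger $s$) I would invoke the fractional Bramble--Hilbert estimate, or interpolate between the integer-order bounds using Sobolev space interpolation, to obtain $h_K^{s-\ell}$.

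The main obstacle is the well-definedness and uniform boundedness of $\hat I$ on $H^s(\hat K)$: the functionals in $\Sigma_K$ include point values of first derivatives (\BFS and Argyris) and of second derivatives (Argyris), which are continuous on $H^s(\hat K)$ only once the embedding $H^s(\hat K) \hookrightarrow C^2(\hat K)$ holds, i.e. for $s$ above the critical Sobolev threshold in two dimensions. A secondary technical difficulty, specific to Argyris, is that the element is not affine-equivalent in the literal sense, since the edge normal-derivative functionals do not transform covariantly under $F_K$; the scaling step must therefore be carried out in the almost-affine framework, estimating $D^\ell(v\circ F_K)$ directly via the chain rule rather than through a commuting interpolation diagram. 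Both points are classical and are worked out in detail in \cite{ciarlet1978finite}, which I would cite for the verification.
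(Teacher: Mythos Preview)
Your outline is the standard Bramble--Hilbert/scaling argument and is correct, including the two caveats you flag (Sobolev regularity needed for the nodal functionals, and the almost-affine treatment of the Argyris normal-derivative degrees of freedom). The paper does not give its own proof of this lemma at all: it is stated with a direct citation to \cite{ciarlet1978finite} and used as a black box, so what you have written is precisely the argument that reference supplies.
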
 

{\bf Discussion of \property{1}-\property{2} in Assumption \ref{assumption.forwardoperators}:} For conforming FEMs, 
$$V_h:=V_\C, \;a_h(\cdot,\cdot):=a(\cdot,\cdot),\;\|\cdot\|_h:=|\cdot|_2,\;I_h:=I_\C,\;\mbox{ and } E_h:=E_\C.$$
The interpolation operator in Lemma \ref{interpolant.cfem} satisfies the condition in \property{1} with $\delta_1(s)\lesssim h^{\min\{p+1,s\}-2}$ where $p=3$ (resp. $5$) for the \BFS element  (resp. Argyris element). Since the operator $E_h$ for the conforming FEMs is the identity operator $\rm{Id}$ as $V_\C \subset V$, the conditions $(a)-(d)$ in \property{2} are trivially satisfied with $\delta_2=\delta_3=\delta_4=\delta_5=0$. As a result,
Theorems \ref{thm.uerror} and \ref{thm.forwarderror} lead to
$$\|Tf-T_hf\|_2\le Ch^{\min\{p+1,s\}-2}\|f\|_k $$
and
\begin{equation}\label{thm.forwarderror.cfem}
\| \Phi Tf -\Phi T_h f\| \le C(N)h^{\min\{p+1,\;s\}+\min\{p+1,\;r\}-4}\| f\|_{k}.
\end{equation}
\subsubsection{Nonconforming FEM}\label{sec.ncfem.forward}
The well-known nonconforming finite element \cite{ciarlet1978finite}, the Morley element, is discussed below.

\medskip

 For a triangle $K \in \cT_h$ with vertices $a_1,\,a_2,\,a_3$, let $a_{12},\, a_{23}$ and $a_{13}$ denote the midpoint of the edges opposite to the vertices $a_3,\, a_1$ and $a_2$, respectively (Figure ~\ref{fig.cfem}, right). The Morley finite element is a triplet $(K,\cP_K, \Sigma_K)$ where $K$ is a triangle in $\mesh$, $\cP_K$ = $\Poly_2(K)$, space of all polynomials of degree $\le$ 2 in two variables defined on $K$ (dim $\cP_K=6$) and $\Sigma_K$ denotes the degrees of freedom given by: $$\Sigma_K=\bigg\{p(a_i), 1\le i \le 3; \frac{\partial p}{\partial n}(a_{ij}), 1\le i <j \le 3\bigg\}.$$
The nonconforming {\it Morley element space} $V_\M$  is defined by
\[
V_{\M}:=\left \{ v_\M \in {\mathcal P}_2(\cT_h){{\Bigg |}}
\begin{aligned}
& v_\M \text{ is continuous at the interior vertices}  \text{ and vanishes at the } \\
& \text{vertices of }\partial \Omega; 
D_{\NC}{ v_\M} \text{ is continuous at the midpoints of interior}
\\
& \text{edges and vanishes at the  midpoints of the edges of } \partial \Omega
\end{aligned}
\right\}
\]
and  is equipped with the norm $\|\cdot\|_{\NC}$ defined by  $ \|v_\M\|_{\NC}= \bigg(\sum_{K\in \mathcal{T}_h}\|D^2_\NC v_\M\|_{L^2(K)}^2\bigg)^{1/2}$.  Here $D_{\NC} \cdot$ and  $D^2_\NC \cdot$ denote the piecewise gradient and second derivative of the arguments on triangles  $K \in \mathcal{T}_h$. For $v \in V$, we have $\|v\|_2=\|v\|_\NC$ and thus $\| \cdot \|_\NC$ denotes the norm in $V+V_\M$. The discrete bilinear form is defined by $$a_h(\eta_\M,\chi_\M)=:a_\NC(\eta_\M,\chi_\M)=\sit D^2 \eta_\M:D^2\chi_\M \dx \quad \forall\, \eta_\M,\chi_\M \in V_ {\M},
$$ where $:$ denotes the scalar product of matrices.

\smallskip

Some auxiliary results needed in the discussion are stated below.

\begin{lemma}[\it Morley interpolation operator] \label{Morley_Interpolation} 
	\cite{CCDGJH14, DG_Morley_Eigen, CCP} 
	The Morley interpolation operator 
	$I_{\rm M}: V \rightarrow V_{\rm M}$ defined by
	\begin{align*}
	& (I_{\rm M} v)(z)=v(z) \text{ for any } \text{ vertex $z$ of } \cT_h \text{ and } 
	v\in V, \\
	&
	\int_E\frac{\partial I_{\rm M} v}{\partial n_E}\ds=\int_E\frac{\partial v}{\partial n_E}\ds \text{ for any  edge } E  \text{ of } \cT_h \text{ and } 
	v \in V,
	\end{align*}
	satisfies the integral mean property $D^2_{\rm NC} I_{\rm M} =\Pi_0 D_{\rm NC}^2$ (where $\Pi_0$ denotes the projection onto the space of piecewise constants ${\mathcal P}_0(\cT_h)$) of the Hessian.  For $K \in \cT_h$ with diameter $h_K$, we have the approximation and stability properties as
\[	\|h_K^{-2}(1-I_{\rm M})v\|_{L^2(K)}+\|h_K^{-1}D_{\rm NC}(1-I_{\rm M})v\|_{L^2(K)} \lesssim
	\|D_{\rm NC}^2 (1-I_{\rm M})v\|_{L^2(K)}. \]
	Moreover, for $v \in V \cap H^{2+\gamma}(\Omega)$, 
	\[\|D_{\rm NC}^2 (1-I_{\rm M})v\|\lesssim h^{\gamma} \| v\|_{{2+\gamma}},\] where $\gamma \in (1/2,1]$ is the index of elliptic regularity. 
	Also, if $v\in V \cap H^{s}(\Omega)$, then
	\begin{align*}
	&\|v-I_{\rm M}v\|_{h} \lesssim h^{\min\{3,\;s\}-2}\|v\|_{s}.
	\end{align*}
\end{lemma}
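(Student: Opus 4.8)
The statement collects several standard facts about the Morley interpolant, and I would organize the argument around one central identity, the integral-mean (Hessian-projection) property, from which both approximation bounds follow, together with a separate reference-element argument for the scaled stability estimate.

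\textbf{Step 1 (the integral-mean identity).} Fix $K\in\cT_h$ and set $w:=(1-I_{\rm M})v$. Since $I_{\rm M}v|_K\in\cP_2(K)$, the Hessian $D^2(I_{\rm M}v)$ is constant on $K$, so it suffices to show $\int_K D^2_{\rm NC}w\dx=0$. For fixed $k,l\in\{1,2\}$ the divergence theorem (applied in the $x_l$ variable to $\partial_k w$) gives
\[
\int_K\partial_{kl}w\dx=\int_{\partial K}(\partial_k w)\,n_l\ds,
\]
where $n=(n_1,n_2)$ is the outward unit normal. On each edge $E\subset\partial K$ the normal $n$ and the unit tangent $t$ are constant and $\partial_k w=(\partial_n w)n_k+(\partial_t w)t_k$, whence
\[
\int_{\partial K}(\partial_k w)\,n_l\ds=\sum_{E\subset\partial K}\Big(n_k n_l\int_E\partial_n w\ds+t_k n_l\int_E\partial_t w\ds\Big).
\]
The first integral vanishes because the edge degrees of freedom impose $\int_E\partial_n(v-I_{\rm M}v)\ds=0$, while $\int_E\partial_t w\ds$ is the difference of the values of $w$ at the endpoints of $E$, which are zero by the vertex degrees of freedom. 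Hence $\int_K\partial_{kl}w\dx=0$ for all $k,l$, and averaging gives $D^2_{\rm NC}I_{\rm M}v=\Pi_0 D^2_{\rm NC}v$.

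\textbf{Step 2 (approximation estimates).} Step~1 yields, on every $K$, the pointwise identity $D^2_{\rm NC}(1-I_{\rm M})v=(1-\Pi_0)D^2_{\rm NC}v$, so that $\|D^2_{\rm NC}(1-I_{\rm M})v\|_{L^2(K)}$ is exactly the $L^2$-error of projecting the Hessian onto constants. For $v\in V\cap H^{2+\gamma}(\O)$ the Bramble--Hilbert lemma gives $\|(1-\Pi_0)D^2 v\|_{L^2(K)}\lesssim h_K^{\gamma}|v|_{H^{2+\gamma}(K)}$; squaring, summing over $K$ and bounding $h_K\le h$ produces $\|D^2_{\rm NC}(1-I_{\rm M})v\|\lesssim h^{\gamma}\|v\|_{2+\gamma}$. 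The identical argument with $g=D^2 v\in H^{s-2}(K)$, using that projection onto constants has order $\min\{1,s-2\}=\min\{3,s\}-2$, and recalling $\|\cdot\|_h=\|D^2_{\rm NC}\cdot\|$, gives $\|v-I_{\rm M}v\|_h\lesssim h^{\min\{3,s\}-2}\|v\|_s$.

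\textbf{Step 3 (scaled stability estimate).} On the reference triangle $\hat K$ consider the subspace of $H^2(\hat K)$ of functions $\hat w$ that vanish at the three vertices and satisfy $\int_{\hat E}\partial_{\hat n}\hat w\,\d\hat s=0$ on each edge. If $\|\hat D^2\hat w\|_{L^2(\hat K)}=0$ then $\hat w$ is affine and, vanishing at three non-collinear points, is identically zero; a Peetre--Tartar (quotient/compactness) argument then yields $\|\hat w\|_{L^2(\hat K)}+\|\hat\nabla\hat w\|_{L^2(\hat K)}\lesssim\|\hat D^2\hat w\|_{L^2(\hat K)}$ on this subspace. Applying this to $\widehat{(1-I_{\rm M})v}$, which lies in the subspace, and pulling back through the affine map $F_K(\hat x)=B_K\hat x+b_K$ (with $\|B_K\|\sim h_K$ and $|\det B_K|\sim h_K^2$) supplies the weights $h_K^{-2}$ and $h_K^{-1}$ on the two lower-order terms, giving the claimed element-wise bound.

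\textbf{Main obstacle.} The decisive ingredient is the integral-mean identity of Step~1: once it is available, the approximation bounds are classical projection estimates and the stability bound is a reference-element Poincaré inequality. The only genuinely delicate point is the affine equivalence used in Step~3 — that the edge condition $\int_E\partial_n w\ds=0$ is preserved under the pull-back to $\hat K$, since normal derivatives do not transform covariantly — which is precisely the bookkeeping carried out in the cited references.
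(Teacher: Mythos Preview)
The paper does not supply its own proof of this lemma; it is stated with citations to the references \cite{CCDGJH14, DG_Morley_Eigen, CCP} and then used as a black box. Your argument is correct and is essentially the standard one found in those sources: the integral-mean identity via integration by parts and the Morley degrees of freedom, the approximation bounds as immediate consequences of the resulting Hessian-projection identity combined with Bramble--Hilbert for $\Pi_0$, and the scaled stability inequality by a reference-element Poincar\'e/Peetre--Tartar argument plus affine scaling.

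One small remark on your ``main obstacle'': the affine-invariance concern in Step~3 is genuine but resolves cleanly once you observe that the vertex conditions already force $\int_E\partial_t w\ds=0$, so the pair of edge conditions is equivalent to $\int_E\nabla w\ds=0$ (vector-valued), and \emph{this} condition is manifestly preserved under affine pull-back. With that observation the subspace on $\hat K$ is exactly the one you describe, and no further bookkeeping is needed.
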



\begin{lemma}[\it Enrichment operator]\label{hctenrich} \cite{DG_Morley_Eigen,SCNNDS} For any $v_{\rm M} \in 
	V_{{\rm M}}$,  there exists an enrichment operator $E_{\rm M}:{V}_{{\rm M}}\to V$ such that for all $v_{\rm M} \in V_{{\rm M}}$,
	\begin{align*}
	&(a)\, I_{\rm M} E_{\rm M} v_{\rm M}= v_{\rm M}, \;
	(b)\,\Pi_0(v_{\rm M} - E_{\rm M}  v_{\rm M}) =0, \;
	\; (c)\,\Pi_0D_{\rm NC}^2(v_{\rm M} - E_{\rm M}  v_{\rm M}) =0, \\
	& (d)\,\| h_{K}^{-2}(v_{\rm M}-E_{\rm M}v_{\rm M})\| + \| h_{K}^{-1}D_{\rm NC}(v_{\rm M}-E_{\rm M}v_{\rm M})\| + \| D^2_{\rm NC}(v_{\rm M}-E_{\rm M}v_{\rm M})\| \lesssim \min_{v \in V}\| D_{\rm NC}^2(v_{\rm M}-v)\|. 
	\end{align*}
\end{lemma}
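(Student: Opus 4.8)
The plan is to realise $E_{\rm M}$ as a \emph{companion} (right--inverse) operator built on the Hsieh--Clough--Tocher (HCT) macro element, a $C^1$ conforming finite element space $S\subset V=H^2_0(\Omega)$ whose local degrees of freedom are the value and the gradient at each vertex together with the normal derivative at each edge midpoint. Given $v_{\rm M}\in V_{\rm M}$, I would first define an averaged HCT function $\widehat E v_{\rm M}\in S$ by prescribing its degrees of freedom from those of $v_{\rm M}$: the vertex values are set to $v_{\rm M}(z)$ (single valued, since $v_{\rm M}$ is continuous at interior vertices and vanishes at boundary vertices), the vertex gradients are set to the arithmetic mean of $\nabla(v_{\rm M}|_K)(z)$ over the triangles $K$ sharing $z$, and the edge normal derivatives are chosen so that the integral means $\int_E \partial_n(\widehat E v_{\rm M})$ agree with $\int_E\partial_n v_{\rm M}$; all degrees of freedom attached to $\partial\Omega$ are set to zero so that $\widehat E v_{\rm M}\in H^2_0(\Omega)$. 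Finally I would correct $\widehat E v_{\rm M}$ by interior $C^1$ bubble functions, which vanish together with their gradient on each element boundary and hence leave every vertex and edge functional untouched, in order to match cell integral means and obtain $E_{\rm M}v_{\rm M}$ with $\Pi_0(v_{\rm M}-E_{\rm M}v_{\rm M})=0$; this is what secures property $(b)$.

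Property $(a)$ is then immediate: $I_{\rm M}E_{\rm M}v_{\rm M}$ and $v_{\rm M}$ share the same vertex values and the same edge normal--derivative integrals, which form a unisolvent set of Morley degrees of freedom, so the two functions coincide. Property $(c)$ follows from $(a)$ together with the integral--mean property $D^2_{\rm NC}I_{\rm M}=\Pi_0 D^2_{\rm NC}$ of Lemma \ref{Morley_Interpolation}: since $E_{\rm M}v_{\rm M}\in V$,
\[\Pi_0 D^2_{\rm NC}(E_{\rm M}v_{\rm M})=D^2_{\rm NC}I_{\rm M}(E_{\rm M}v_{\rm M})=D^2_{\rm NC}v_{\rm M},\]
and as $D^2_{\rm NC}v_{\rm M}$ is piecewise constant we have $\Pi_0 D^2_{\rm NC}v_{\rm M}=D^2_{\rm NC}v_{\rm M}$, whence $\Pi_0 D^2_{\rm NC}(v_{\rm M}-E_{\rm M}v_{\rm M})=0$. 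One checks in passing that the bubble correction preserves both $(a)$ and $(c)$, since an interior $C^1$ bubble has vanishing Morley data and satisfies $\int_K D^2 b=0$.

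The crux is the weighted estimate $(d)$, and this is where the main difficulty lies. My plan is to bound each of the three terms element by element by a scaling (reference--element) argument combined with equivalence of norms on the finite--dimensional local space. The function $v_{\rm M}-E_{\rm M}v_{\rm M}$ is a piecewise polynomial whose defining degrees of freedom are precisely the \emph{deviations} of the Morley data from their averaged counterparts, namely the differences of the vertex gradients inherited from adjacent triangles and the differences of edge normal derivatives; these are exactly the jumps of $v_{\rm M}$ and of $D_{\rm NC}v_{\rm M}$ across interelement edges, together with their traces on $\partial\Omega$. A scaling argument then converts the weighted norms on the left of $(d)$ into a sum of the form $\sum_{E}h_E^{-1}\|[v_{\rm M}]_E\|_{L^2(E)}^2+\sum_E h_E\|[D_{\rm NC}v_{\rm M}]_E\|_{L^2(E)}^2$ of scaled jump contributions. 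The decisive step is to dominate these jumps by the right--hand side of $(d)$: for every $v\in V$ the jumps of $v_{\rm M}$ coincide with those of $v_{\rm M}-v$, because $v\in H^2_0(\Omega)$ is jump--free, and a trace inequality combined with a Poincar\'e estimate on edge patches bounds the scaled jump sum by $\|D^2_{\rm NC}(v_{\rm M}-v)\|^2$; taking the infimum over $v\in V$ delivers the claimed bound.

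The genuinely technical part, and the one I expect to be the main obstacle, is this last jump--to--energy estimate: showing that $\min_{v\in V}\|D^2_{\rm NC}(v_{\rm M}-v)\|$ controls the whole collection of lower--order edge jumps uniformly in $h$. This requires the Morley degrees of freedom—continuity of $v_{\rm M}$ at vertices and continuity of the integral mean of $\partial_n v_{\rm M}$ across edges—to be exploited carefully, so that the jump functionals annihilate the conforming part, after which a patchwise Poincar\'e--Friedrichs inequality supplies the missing powers of $h_E$. By contrast, the remaining pieces—the bubble correction enforcing $(b)$ and the elementwise reference--configuration estimates—are routine.
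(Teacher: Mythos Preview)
The paper does not prove this lemma; it simply quotes the result from the cited references \cite{DG_Morley_Eigen,SCNNDS}, so there is no ``paper's own proof'' to compare against. Your outline is precisely the standard construction of the companion operator found in those references (HCT averaging plus a $C^1$ interior bubble to enforce the cell--mean condition), and the arguments you sketch for $(a)$--$(d)$ are correct, including the neat derivation of $(c)$ from $(a)$ via $D^2_{\rm NC}I_{\rm M}=\Pi_0D^2_{\rm NC}$ and the jump--to--energy reduction for $(d)$.
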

\begin{lemma}\cite[ Lemma 4.2]{ScbSungZhang}\label{Anc.bound} 
	For $\chi \in H^{2+\gamma}(\Omega)$ and $\phi_{\rm M} \in V_{\rm M}$, it  holds that 
	$$a_{\rm NC}(\chi,E_{\rm M}\phi_{\rm M}-\phi_{\rm M})\lesssim h^\gamma\| \chi \|_{2+\gamma}\| \phi_{\rm M} \|_{\rm NC}.$$
\end{lemma}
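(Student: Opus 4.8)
The plan is to exploit the orthogonality built into the enrichment operator, namely property $(c)$ of Lemma \ref{hctenrich}, which states that the piecewise-constant projection $\Pi_0 D^2_{\rm NC}(\phi_{\rm M}-E_{\rm M}\phi_{\rm M})$ vanishes on every triangle. Writing $w:=E_{\rm M}\phi_{\rm M}-\phi_{\rm M}$ and using the definition $a_{\rm NC}(\chi,w)=\sum_{K\in\cT_h}\int_K D^2\chi:D^2_{\rm NC}w\dx$, I would first insert the elementwise mean of $D^2\chi$ at no cost: since $\int_K D^2_{\rm NC}w\dx=|K|\,\Pi_0 D^2_{\rm NC}w|_K=0$ by property $(c)$, the constant matrix $\Pi_0 D^2\chi|_K$ integrates against $D^2_{\rm NC}w$ to zero on each $K$, so that
$$a_{\rm NC}(\chi,w)=\sum_{K\in\cT_h}\int_K (D^2\chi-\Pi_0 D^2\chi):D^2_{\rm NC}w\dx.$$

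Next I would apply the Cauchy--Schwarz inequality elementwise and then the discrete Cauchy--Schwarz inequality over the triangles to obtain
$$a_{\rm NC}(\chi,w)\le\Big(\sum_{K\in\cT_h}\|D^2\chi-\Pi_0 D^2\chi\|_{L^2(K)}^2\Big)^{1/2}\Big(\sum_{K\in\cT_h}\|D^2_{\rm NC}w\|_{L^2(K)}^2\Big)^{1/2}.$$
The second factor equals $\|D^2_{\rm NC}(\phi_{\rm M}-E_{\rm M}\phi_{\rm M})\|$, which by property $(d)$ of Lemma \ref{hctenrich} is controlled by $\min_{v\in V}\|D^2_{\rm NC}(\phi_{\rm M}-v)\|$; choosing the admissible $v=0\in V$ bounds this by $\|\phi_{\rm M}\|_{\rm NC}$. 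For the first factor I would invoke the standard approximation property of the $L^2$-projection onto piecewise constants (a fractional Bramble--Hilbert / Deny--Lions estimate): for $0<\gamma\le 1$ and $g\in H^\gamma(K)$ one has $\|g-\Pi_0 g\|_{L^2(K)}\lesssim h_K^\gamma|g|_{H^\gamma(K)}$. Applying this with $g=D^2\chi$ and summing gives
$$\Big(\sum_{K\in\cT_h}\|D^2\chi-\Pi_0 D^2\chi\|_{L^2(K)}^2\Big)^{1/2}\lesssim h^\gamma\Big(\sum_{K\in\cT_h}|\chi|_{H^{2+\gamma}(K)}^2\Big)^{1/2}\le h^\gamma\|\chi\|_{2+\gamma},$$
where in the last step I use that $\chi\in H^{2+\gamma}(\Omega)$ globally, so the broken seminorm is dominated by the full $H^{2+\gamma}(\Omega)$ norm. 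Combining the two factors yields the claim.

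The conceptual crux, and the step I expect to be the real obstacle, is recognising and correctly using property $(c)$: it is precisely the vanishing of $\Pi_0 D^2_{\rm NC}w$ that produces the gain of $h^\gamma$, since it permits replacing $D^2\chi$ by its oscillation $D^2\chi-\Pi_0 D^2\chi$ without changing the value of the form. The remaining technical care concerns the fractional-order approximation estimate in the admissible range $\gamma\in(1/2,1]$ and the comparison of the broken Gagliardo seminorm with the global one; the latter holds because each $K\times K$ is contained in $\Omega\times\Omega$, so the elementwise double integrals defining $|\chi|_{H^{2+\gamma}(K)}^2$ sum to at most $|\chi|_{H^{2+\gamma}(\Omega)}^2$.
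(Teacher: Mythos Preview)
Your proof is correct and proceeds along a genuinely different route from the one the paper sketches. The paper indicates that the result ``can be proved using an integration by parts, and the enrichment estimate in energy and piecewise $H^1$ norms in Lemma \ref{hctenrich}$(d)$'': that approach integrates the Hessian pairing by parts on each triangle, producing volume terms involving $D_{\rm NC}(\phi_{\rm M}-E_{\rm M}\phi_{\rm M})$ (hence the need for the weighted $H^1$ part of Lemma \ref{hctenrich}$(d)$) together with edge terms that are controlled via trace inequalities and the structure of $E_{\rm M}$.

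Your argument bypasses integration by parts entirely by invoking property $(c)$ of Lemma \ref{hctenrich}: the vanishing of $\Pi_0 D^2_{\rm NC}(\phi_{\rm M}-E_{\rm M}\phi_{\rm M})$ lets you replace $D^2\chi$ by its oscillation $D^2\chi-\Pi_0 D^2\chi$, after which a Cauchy--Schwarz inequality and the fractional $L^2$-projection estimate deliver the factor $h^\gamma$. You then only need the energy (i.e.\ $D^2_{\rm NC}$) part of Lemma \ref{hctenrich}$(d)$, not the piecewise $H^1$ part. This is arguably cleaner at fractional regularity, since it avoids the interpretation of integration by parts when $\chi$ is merely in $H^{2+\gamma}$ with $\gamma<1$; the trade-off is that it imports the fractional Bramble--Hilbert/Deny--Lions estimate as an external ingredient. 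Both approaches are standard and valid.
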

The above result can be proved using an integration by parts, and the enrichment estimate in energy and piecewise $H^1$ norms in Lemma \ref{hctenrich} $(d)$. 
Hence, we arrive at the following lemma.

\begin{lemma}[\it Bound for $a_{\rm NC}(\cdot,\cdot)$]\label{anc.bound}\noindent
	For $\chi \in H^{s}(\Omega)$ and $\phi_{\rm M} \in V_{\rm M}$, it  holds that 
	$$a_{\rm NC}(\chi,E_{\rm M}\phi_{\rm M}-\phi_{\rm M})\lesssim h^{\min\{1,s-2\}}\| \chi \|_{s}\| \phi_{\rm M} \|_{\rm NC}.$$
\end{lemma}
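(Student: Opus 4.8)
The plan is to derive the estimate by interpolating between two endpoint regularities, $s=2$ and $s=3$, and then to use a Sobolev embedding for the range $s>3$. The starting point is the observation that, for a fixed $\phi_{\rm M}\in V_{\rm M}$, the quantity to be bounded is a \emph{linear} functional of $\chi$, namely $L_{\phi_{\rm M}}(\chi):=a_{\rm NC}(\chi,E_{\rm M}\phi_{\rm M}-\phi_{\rm M})$, and I would control its operator norm on the Sobolev scale $H^s(\O)$.

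First I would record the two endpoint bounds. For $s=2$, the boundedness of $a_{\rm NC}(\cdot,\cdot)$ together with Lemma \ref{hctenrich}$(d)$ applied with $v=0\in V$ (which gives $\|E_{\rm M}\phi_{\rm M}-\phi_{\rm M}\|_{\rm NC}\lesssim\|\phi_{\rm M}\|_{\rm NC}$) yields $|L_{\phi_{\rm M}}(\chi)|\lesssim\|\chi\|_2\|\phi_{\rm M}\|_{\rm NC}$, i.e.\ the exponent $h^0$. For $s=3$, I would invoke the already established Lemma \ref{Anc.bound} with the admissible index $\gamma=1$, obtaining $|L_{\phi_{\rm M}}(\chi)|\lesssim h\,\|\chi\|_3\|\phi_{\rm M}\|_{\rm NC}$, i.e.\ the exponent $h^1$.

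Next I would interpolate these two bounds. Using that the Sobolev spaces form an interpolation scale, $[H^2(\O),H^3(\O)]_\theta=H^{2+\theta}(\O)$ for $\theta\in[0,1]$, the operator-interpolation theorem applied to $L_{\phi_{\rm M}}$ gives, for $s=2+\theta\in[2,3]$,
$$|L_{\phi_{\rm M}}(\chi)|\lesssim \big(\|\phi_{\rm M}\|_{\rm NC}\big)^{1-\theta}\big(h\,\|\phi_{\rm M}\|_{\rm NC}\big)^{\theta}\|\chi\|_{s}=h^{\,s-2}\,\|\chi\|_{s}\|\phi_{\rm M}\|_{\rm NC},$$
which is exactly the claimed bound, since $\min\{1,s-2\}=s-2$ on this range. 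For $s>3$ I would instead use the continuous embedding $H^s(\O)\hookrightarrow H^3(\O)$, so that $\|\chi\|_3\lesssim\|\chi\|_s$, and apply the $s=3$ endpoint directly to get $|L_{\phi_{\rm M}}(\chi)|\lesssim h\,\|\chi\|_s\|\phi_{\rm M}\|_{\rm NC}$; here $\min\{1,s-2\}=1$, so the two ranges combine into the single estimate with exponent $h^{\min\{1,s-2\}}$.

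The main obstacle is the low-regularity range $s\in(2,\tfrac52]$: a direct integration-by-parts argument in the spirit of Lemma \ref{Anc.bound} is not available there, because the normal-trace terms of $D^2_{\rm NC}\chi$ require $\chi\in H^{2+\gamma}$ with $\gamma>\tfrac12$, so a plain substitution $\gamma=s-2$ into Lemma \ref{Anc.bound} only settles $s\in[\tfrac52,3]$. I would therefore have to justify the interpolation step carefully, namely that the identity $[H^2(\O),H^3(\O)]_\theta=H^{2+\theta}(\O)$ holds on the polygonal domain $\O$ and that $L_{\phi_{\rm M}}$ is a genuinely bounded linear functional on both endpoint spaces, so that the interpolation theorem for linear operators applies with the stated interpolated norm $h^{\theta}$.
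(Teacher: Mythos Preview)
Your proof is correct and takes a different route from the paper. The paper does not give a separate argument for this lemma; it merely remarks that the integration-by-parts proof of Lemma~\ref{Anc.bound} (combined with the enrichment bounds of Lemma~\ref{hctenrich}$(d)$) yields the result, and writes ``Hence, we arrive at the following lemma.'' You instead use Lemma~\ref{Anc.bound} only at the single endpoint $\gamma=1$, pair it with the trivial $s=2$ bound, and interpolate along $[H^2(\O),H^3(\O)]_\theta=H^{2+\theta}(\O)$. The benefit of your approach is exactly what you identify: for $s\in(2,\tfrac52]$ the edge-trace terms in a direct integration-by-parts argument are not available, and interpolation bypasses this entirely. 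An alternative direct fix for that range---closer in spirit to the paper and avoiding interpolation---is to invoke the orthogonality $\Pi_0 D^2_{\rm NC}(\phi_{\rm M}-E_{\rm M}\phi_{\rm M})=0$ from Lemma~\ref{hctenrich}$(c)$, so that $a_{\rm NC}(\chi,E_{\rm M}\phi_{\rm M}-\phi_{\rm M})=\sum_K\int_K(D^2\chi-\Pi_0 D^2\chi):D^2_{\rm NC}(E_{\rm M}\phi_{\rm M}-\phi_{\rm M})$, and then apply the approximation property of $\Pi_0$ in $H^{s-2}$ together with Lemma~\ref{hctenrich}$(d)$; this gives the bound for all $s\ge 2$ in one step. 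Either way, your argument stands.
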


{\bf Discussion of \property{1}-\property{2} in Assumption \ref{assumption.forwardoperators}:} In this case, 
$$V_h:=V_\M,\;a_h(\cdot,\cdot):=a_\NC(\cdot,\cdot),\;\|\cdot\|_h:=\|\cdot\|_\NC,\;I_h:=I_\M,\;\mbox{ and } E_h:=E_\M.$$ 
The definition of $a_\NC(\cdot,\cdot)$ and the norm $\|\cdot\|_\NC$ verify the coercivity and boundedness properties of $a_\NC(\cdot,\cdot)$. From Lemma \ref{Morley_Interpolation} and \property{1}, we have $\delta_1(s)\lesssim h^{\min\{1,s-2\}}$. Lemma \ref{hctenrich}$(d)$ and \property{2}$(a)-(b)$ show that $\delta_2\lesssim h^2, \delta_3\lesssim 1$. Lemma \ref{anc.bound} and \property{2}$(c)$ imply $\delta_4\lesssim h^{\min\{1,s-2\}}$.
Since the piecewise second derivatives of $v_\M \in V_\M$ are constants, the orthogonality property in Lemma \ref{hctenrich}$(c)$ and \property{2}$(d)$ yield $\delta_5=0$. Consequently,
$$\|Tf-T_hf\|_h\le Ch^{\min\{1,s-2\}}\|f\|_k $$
and
\begin{equation}\label{thm.forwarderror.ncfem}
\| \Phi Tf -\Phi T_h f\| \le C(N)h^{\min\{1,s-2\}+\min\{1,r-2\}}\| f\|_{k}.
\end{equation}
As seen above, the order of convergence of the error estimate in piecewise $H^2$ norm for Morley FEM is atmost 1 even if $u \in H^s(\O)$ for $s \ge 3$. This is due to the fact that $V_\M$ is a low-order scheme.

\begin{remark}
	It is interesting to note that the first term in the right hand side of \eqref{wh.interpolation} for conforming FEM is 0, that is, $a(u_\C-u,w_\C)=0$ for all $w_\C \in V_\C$ since $V_\C \subset V$. Also, since the Morley interpolation operator $I_\M$ in Lemma \ref{Morley_Interpolation} satisfies $D^2_{\rm NC} I_{\rm M} =\Pi_0 D_{\rm NC}^2$ and the second derivatives of $w_\M \in V_\M$ are constants, the second term in the right hand side of \eqref{wh.interpolation} for Morley nonconforming FEM is 0.
	
	\smallskip
	
	For $f \in L^2(\O)$ with $u,\xi \in H^{2+\gamma}(\O)$, results for Argyris, \BFS conforming FEMs and Morley FEM yield
	$$\|Tf-T_hf\|_2\le Ch^{\gamma}\|f\| ,\quad \| \Phi Tf -\Phi T_h f\| \le C(N)h^{2\gamma}\| f\|.$$
\end{remark}

\section{Inverse problem}\label{sec.inverseproblem}
The forward problem is analysed in Section \ref{sec.forwardproblem} whereas this section deals with the inverse problem which can be referred to as the inverse of the forward problem. Given the measurement $m$ of the displacement field $u$, the inverse problem is to determine the source field $f$ of the biharmonic equation.  We will find approximations of the solution of the inverse problem by taking a finite number of displacement readings. The inverse problem is ill-posed as the solution does not depend continuously on the measurement data as the operator $T$ is a compact operator of infinite rank. In order to obtain a stable approximate solution, we consider reconstructions obtained using the Tikhonov regularization method \cite{IP_HEMHAN, MTN_book_LOE}.
\medskip

Let $F$ be a closed subspace of $H^k(\O)$ where the source field reconstruction exists, $k \in \N_0$. We also consider a symmetric bilinear form $b : F \times F \rightarrow \R$, which is continuous (resp. coercive) in $F$ with continuity (resp. coercivity) constant $C_1$ (resp. $C_2$) with respect to the norm $\|\cdot\|_k$. For instance, $F$ can be taken as $L^2(\O)$ (resp. $H^1_0(\O)$) for $k=0$ with $b(f,g)=(f,g)$ (resp. $k=1$ with $b(f,g)=\int_\O \nabla f\cdot \nabla g \dx$).
\smallskip

We may observe that for every  $f \in F$, 
\begin{equation}\label{h-2tol2}
\| f\|_{-2} \le \|  f \|_k.
\end{equation}
Let the actual source field be denoted by $f_{\t}$ and assume that $f_{\t} \in F$. The reconstructed regularised approximation of the source field $f_\alpha$ is then obtained as a Tikhonov regularized solution of the inverse problem, defined by the minimization problem
$$f_\alpha=\argmin_{f \in F}\big\{\|m-\Phi Tf\|^2+\alpha b(f,f)\big\},$$
where $\alpha>0$ is the regularization parameter to be chosen appropriately, and $m=\Phi Tf_{\t}$. Define
$$J(f):=\|m-\Phi Tf\|^2+\alpha b(f,f).$$
Thus, $J(f_\a)=\min_{f \in F}J(f)$. A necessary condition for $f_\a$ to be a minimiser of $J(\cdot)$ is that the derivative of $J(\cdot)$ vanishes at $f_\a$, so that
\begin{equation*}
\lim_{\beta \to 0}\frac{J(f_\a + \beta f)-J(f_\a)}{\beta}=0 \fl f \in F.
\end{equation*}
Consider ${J(f_\a + \beta f)-J(f_\a)}$. Using the definition of $J(\cdot)$ and the bilinear and symmetric properties of $b(\cdot,\cdot)$, we obtain
\begin{align*}
{J(f_\a + \beta f)-J(f_\a)}&
&=2\beta^2\Phi Tf-2\beta(m-\Phi Tf_\a)^*(\Phi Tf)+2\beta\alpha b(f_\a,f)+\beta^2 \alpha b(f,f).
\end{align*}
Dividing the above equation by $\beta$ and taking the limit $\beta \to 0$, we deduce
$$(\Phi Tf_\a)^*(\Phi Tf)+\alpha b(f_\a,f) -m^*(\Phi Tf)=0.$$

\medskip

Therefore, the minimisation problem can be written equivalently in variational form: seeks $f_\alpha \in F$ such that
\begin{equation}\label{weak.inverse}
B(f_\alpha,g)=l(g) \quad \forall\, g \in F
\end{equation}
where
\begin{equation}\label{weak.inverse.Bandl}
B(f,g)=(\Phi Tf)^*(\Phi Tg)+\alpha b(f,g) \mbox{ and } l(g)=m^*(\Phi Tg).
\end{equation}

The proof of the next lemma follows from a Cauchy-Schwarz inequality, the continuity and coercivity properties of $b(\cdot,\cdot)$, \eqref{stability.measurement} and \eqref{h-2tol2} and hence is skipped.
\begin{lemma}\label{lem.bilinearB}
	The bilinear form $B(\cdot,\cdot)$ on $F$ is continuous and coercive.
\end{lemma}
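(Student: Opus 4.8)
The plan is to decompose $B(f,g)=(\Phi Tf)^*(\Phi Tg)+\alpha\,b(f,g)$ into its data-fidelity part and its regularisation part, and to verify continuity and coercivity on $F$ by bounding each part separately.

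For continuity, I would first treat the term $(\Phi Tf)^*(\Phi Tg)$, which is simply the Euclidean inner product of the two vectors $\Phi Tf,\Phi Tg\in\R^N$. A Cauchy--Schwarz inequality on $\R^N$ gives $|(\Phi Tf)^*(\Phi Tg)|\le\|\Phi Tf\|\,\|\Phi Tg\|$, and I would control each factor by the chain $\|\Phi Tf\|\le C\|Tf\|_2\le C\|f\|_{-2}\le C\|f\|_k$, where the first inequality is \eqref{stability.measurement}, the second is the forward stability \eqref{stability}, and the third is \eqref{h-2tol2} (valid because $F\subset H^k(\O)$). This bounds the first term by $C^2\|f\|_k\|g\|_k$. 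The regularisation term is handled directly by the continuity of $b(\cdot,\cdot)$, namely $\alpha|b(f,g)|\le\alpha C_1\|f\|_k\|g\|_k$. Summing gives $|B(f,g)|\le(C^2+\alpha C_1)\|f\|_k\|g\|_k$, i.e.\ continuity.

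For coercivity, I would use that the data-fidelity term is a squared Euclidean norm and hence non-negative: $B(f,f)=\|\Phi Tf\|^2+\alpha\,b(f,f)\ge\alpha\,b(f,f)$. The coercivity of $b(\cdot,\cdot)$ then yields $\alpha\,b(f,f)\ge\alpha C_2\|f\|_k^2$, so $B(f,f)\ge\alpha C_2\|f\|_k^2$ with $\alpha>0$ fixed, which is coercivity with constant $\alpha C_2$.

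The argument is essentially routine; the only subtlety worth flagging is that coercivity cannot come from the data-fidelity term. Since $\Phi$ has finite rank, $\Phi T$ has finite-dimensional range and $\|\Phi Tf\|^2$ vanishes on an infinite-dimensional subspace of $F$, so the lower bound on $B(f,f)$ rests entirely on the regularisation form $\alpha\,b(\cdot,\cdot)$; this is precisely why $b$ is required to be coercive in the full $\|\cdot\|_k$ norm. The step needing the most attention is therefore the passage from the $L^2$/$H^{-2}$-level estimate \eqref{stability.measurement}--\eqref{stability} to a bound in $\|\cdot\|_k$, which is exactly where \eqref{h-2tol2} and the inclusion $F\subset H^k(\O)$ enter.
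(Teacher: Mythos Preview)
Your proof is correct and follows exactly the route the paper indicates: the paper states (without writing out details) that the lemma follows from a Cauchy--Schwarz inequality, the continuity and coercivity of $b(\cdot,\cdot)$, \eqref{stability.measurement} and \eqref{h-2tol2}, which are precisely the ingredients you use. Your additional remark on why coercivity must come entirely from the regularisation term is a nice clarification but not required for the argument.
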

%
%
Also, the linear functional $l(\cdot)$ in \eqref{weak.inverse} is continuous in the Hilbert space $F$. Hence, by the Lax-Milgram lemma, there exists a unique solution $f_\alpha$ to \eqref{weak.inverse} and it holds that
\begin{equation}\label{stability.fr}
\|f_\alpha\|_k \le \frac{C(N)}{{\alpha C_2}}\|m \|,
\end{equation}
where $C(N)>0$ is a constant depending on $N$, but independent of $\alpha$ and $m$.

\smallskip

However, it is possible to get a better bound for $f_\alpha$. For that, we consider the operator approach of the variational form \eqref{weak.inverse}.

{{Define $\mathcal{A} \in \mathcal{L}(F,\R^N)$ and $\mathcal{B} \in \mathcal{L}(F,F)$ by
		$$(\mathcal{A^*A}f)(g):=(\Phi Tf)^*(\Phi Tg) \mbox{ and } (\mathcal{B}f)(g):=b(f,g).$$
		Then, \eqref{weak.inverse} can be rewritten as:
		\begin{equation}\label{operator}
		(\mathcal{A^*A}+\alpha \mathcal{B})f_\a=\mathcal{A^*}m.
		\end{equation}
Note that $\mathcal{A^*A}$ and $\mathcal{B}$ are positive self-adjoint operators in $F$. Since $\mathcal{A^*A}$ is positive, the coercivity property of $b(\cdot,\cdot)$ leads to
		\begin{align*}
		\|(\mathcal{A^*A}+\alpha \mathcal{B})f \|_k \|f \|_k& \ge ( (\mathcal{A^*A}+\alpha \mathcal{B})f,f)
		=( \mathcal{A^*A}f,f)+(\alpha \mathcal{B}f,f)\ge \alpha C_2 \| f\|_k^2\end{align*}
		for every $f \in F$. Thus,
		$$\|(\mathcal{A^*A}+\alpha \mathcal{B})f \|_k \ge \alpha C_2 \| f\|_k \quad \forall\, f \in F.$$
		Hence, $\mathcal{A^*A}+\alpha \mathcal{B}$ is injective with its range closed, and its inverse from the range is continuous. Since $\mathcal{A^*A}+\alpha \mathcal{B}$ is also self-adjoint, we have 
		$$R(\mathcal{A^*A}+\alpha \mathcal{B})=N(\mathcal{A^*A}+\alpha \mathcal{B})^\perp=F.$$
		Thus, $\mathcal{A^*A}+\alpha \mathcal{B}$ is onto as well and hence it is invertible. Consequently, \eqref{operator} results in
		\begin{equation}\label{operator.falpha}
f_\a=(\mathcal{A^*A}+\alpha \mathcal{B})^{-1}\mathcal{A^*A}f_{\t}.
		\end{equation}
		Observe that
		$(\mathcal{A^*A}+\alpha \mathcal{B})^{-1}\mathcal{A^*A}=I-\alpha\mathcal{B}(\mathcal{A^*A}+\alpha \mathcal{B})^{-1}.$ Since $(\alpha\mathcal{B}(\mathcal{A^*A}+\alpha \mathcal{B})^{-1}f,f)\ge 0$ for every $f \in F$, 
		\begin{align*}
		( (\mathcal{A^*A}+\alpha \mathcal{B})^{-1}\mathcal{A^*A}f,f )&=( f,f)-(\alpha\mathcal{B}(\mathcal{A^*A}+\alpha \mathcal{B})^{-1}f,f)\le ( f,f).
		\end{align*}
		Since $(\mathcal{A^*A}+\alpha \mathcal{B})^{-1}\mathcal{A^*A}$ is self-adjoint, the definition of operator norm and \eqref{operator.falpha} show
		\begin{equation}\label{bound.falpha}
\| f_\a \|_k =\| (\mathcal{A^*A}+\alpha \mathcal{B})^{-1}\mathcal{A^*A}f_{\t}\|_k\le \|f_{\t}\|_k.
		\end{equation}}}

\subsection{Error between the actual and reconstructed sources}
This section deals with the error estimate between the actual source field $f_{\t}$ and the reconstructed source field $f_\alpha$. More precisely, we will show that $f_\a$ lies in a finite dimensional space (see \eqref{def.falpha.basis} below) and then derive the error between $f_\a$ and the best approximation obtainable for $f_{\t}$ in this space (since $f_{\t}$ does not necessarily belong to this space).

\smallskip

Define $d=\frac{1}{\alpha}(m-\Phi Tf_\a)$. Then, \eqref{weak.inverse} can be rewritten as: seeks $f_\a \in F$ such that
$$b(f_\a,g)=d^*(\Phi Tg) \quad \forall\, g \in F.$$ 
The definition of the measurement operator given by \eqref{measurement}, \eqref{aux} and the forward problem \eqref{weak.forwardoperator} imply
\begin{equation}\label{error.b}
b(f_\a,g)=\sum_{i=1}^{N}d_i( \phi_i, Tg)=\sum_{i=1}^{N}d_ia(\xi_i,Tg)=\sum_{i=1}^{N}d_i(\xi_i,g) \quad \forall\, g \in F.
\end{equation}
Let $i\in \{1,\ldots,N\}$. Define $\eta_i \in F$ as the solution to the problem given by: seeks $\eta_i \in F$ such that
\begin{equation}\label{basis_eta}
b(\eta_i,g)=(\xi_i,g) \quad \forall\, g \in F.\end{equation}
This in \eqref{error.b} leads to
\begin{equation}\label{def.falpha.basis}
f_\alpha=\sum_{i=1}^{N}d_i\eta_i.
\end{equation}
Since $\eta_i$ does not depend on $\alpha$ or $m$, this shows that any source reconstruction obtained by solving \eqref{weak.inverse} lies in the same finite dimensional space spanned by $\eta_i$.

\medskip
%
%

{{Let $P_N: F\to F$  be the orthogonal projection onto the span of $\{\eta_1,\cdots,\eta_N\}$ w.r.t. the inner product induced by the bilinear form $b(\cdot,\cdot)$. Thus
		\begin{equation}\label{def.projection}
		b(P_Nf_{\t},\eta_i)=b(f_{\t},\eta_i) \quad i=1,\cdots,N.
		\end{equation}
This orthogonal projection $P_N f_{\t}$ is the best approximation that can be obtained for $f_{\t}$ in the space which is the span of $\{\eta_1,\cdots,\eta_N\}$ where $f_\a$ belongs to.

\smallskip

To state the error estimate, let $L$ be the symmetric and positive definite matrix defined by $L=[b(\eta_j,\eta_i)]_{1\le i,j\le N}$
and $\max { \sigma{(L)}}:=\lambda_N$, where $\sigma(L)$ denotes the set of all eigenvalues of $L$. For obtaining an estimate for $\|f_{\t}-P_Nf_{\t}\|_k$, additional assumptions has to be imposed on $f_{\t}$.
		\begin{theorem}\label{thm.err}
			Let $f_{\t} \in F$ and $m=\Phi Tf_{\t}$. Let $f_{\a}$ be the reconstructed source field that solves \eqref{weak.inverse}. Then, 
			$$\|  f_{\t}-f_\a \|_k \le \| f_{\t}-P_Nf_{\t}\|_k +\frac{\alpha}{\lambda_N}\|f_{\t}\|_k.$$
		\end{theorem}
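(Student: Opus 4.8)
The plan is to insert the best approximation $P_N f_\t$ and split by the triangle inequality,
\[
\|f_\t - f_\a\|_k \le \|f_\t - P_N f_\t\|_k + \|P_N f_\t - f_\a\|_k,
\]
so that the first summand is already the first term in the asserted bound and the whole problem collapses to proving $\|P_N f_\t - f_\a\|_k \le \frac{\alpha}{\lambda_N}\|f_\t\|_k$. The structural fact I would exploit is \eqref{def.falpha.basis}: both $f_\a$ and $P_N f_\t$ live in the finite-dimensional space $W := \mathrm{span}\{\eta_1,\ldots,\eta_N\}$, so their difference is governed entirely by the Gram matrix $L=[b(\eta_j,\eta_i)]$, and the estimate becomes a question of linear algebra.

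First I would coordinatise both elements in the basis $\{\eta_i\}$. Writing $P_N f_\t=\sum_i c_i\eta_i$, the projection identity \eqref{def.projection} together with \eqref{basis_eta} gives $Lc=g$, where $g_i:=b(f_\t,\eta_i)=(\xi_i,f_\t)$. Writing $f_\a=\sum_i d_i\eta_i$, I would insert $d=\frac1\alpha(m-\Phi Tf_\a)$ with $m=\Phi Tf_\t$ and use the identity $(\phi_i,Tg)=(\xi_i,g)$ (from \eqref{aux} and \eqref{weak.forwardoperator}, as already used in \eqref{error.b}) to obtain $\alpha d_i=(\xi_i,f_\t-f_\a)=g_i-(Ld)_i$, i.e. $(L+\alpha I)d=g$. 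Subtracting the two coordinate equations yields the key Galerkin-type identity
\[
b(P_N f_\t - f_\a,\eta_i)=\alpha\, d_i,\qquad i=1,\ldots,N.
\]

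From here everything is explicit. Testing this identity against the coefficients of $P_N f_\t - f_\a=\sum_i(c_i-d_i)\eta_i$ and writing $\|\cdot\|_b:=b(\cdot,\cdot)^{1/2}$ for the energy norm gives $\|P_N f_\t - f_\a\|_b^2=\alpha^2\, d^{\top}L^{-1}d$. Substituting $d=(L+\alpha I)^{-1}Lc$ and diagonalising the symmetric positive definite matrix $L$ reduces the quotient $\|P_N f_\t - f_\a\|_b^2/\|P_N f_\t\|_b^2$ to a maximum over the eigenvalues $\mu$ of $L$ of the elementary expression $\alpha^2/(\mu+\alpha)^2$. Since $P_N$ is the $b$-orthogonal projection we have $\|P_N f_\t\|_b\le\|f_\t\|_b$, and the continuity and coercivity of $b$ (constants $C_1,C_2$) convert the energy norm into $\|\cdot\|_k$; for the model choice $b=(\cdot,\cdot)$ on $L^2$ one has $C_1=C_2=1$ and the clean factor $\alpha/\lambda_N$ is preserved.

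The step I expect to be the crux — and would verify most carefully — is the eigenvalue bound $\alpha^2/(\mu+\alpha)^2\le \alpha^2/\lambda_N^2$. This quantity is largest for the worst (smallest) mode, through $\mu+\alpha\ge\mu_{\min}$, so the spectral quantity that actually makes the argument close is the \emph{smallest} eigenvalue of $L$; at this point I would pin down the ordering convention behind the symbol $\lambda_N$, since it is precisely this conditioning of the measurement Gram matrix that controls how fast the regularised reconstruction approaches the best approximation $P_N f_\t$ in $W$ as $\alpha\to 0$.
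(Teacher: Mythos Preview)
Your approach is correct and takes a genuinely different route from the paper. Both proofs begin with the same triangle inequality and focus on bounding $\|P_Nf_\t-f_\a\|_k$, but then diverge. The paper stays at the operator level: it first shows $\Phi TP_Nf_\t=\Phi Tf_\t$ (so that $P_Nf_\t$ and $f_\t$ produce the same data $m$), rewrites \eqref{weak.inverse} to obtain the operator identity
\[
(\mathcal{A}^*\mathcal{A}+\alpha\mathcal{B})(P_Nf_\t-f_\a)=\alpha\,\mathcal{A}^*L^{-1}\mathcal{A}f_\t,
\]
and then invokes the contraction bound \eqref{bound.falpha} for $(\mathcal{A}^*\mathcal{A}+\alpha\mathcal{B})^{-1}\mathcal{A}^*\mathcal{A}$. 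Your argument instead coordinatises both $f_\a$ and $P_Nf_\t$ in the finite-dimensional span $W=\mathrm{span}\{\eta_i\}$, obtains the two linear systems $Lc=g$ and $(L+\alpha I)d=g$, and finishes by a direct spectral computation on the Gram matrix $L$. Your route is more elementary and self-contained (no appeal to \eqref{bound.falpha} or to the operator machinery around \eqref{operator}); the paper's route is slicker once that machinery is in place and avoids the explicit norm-equivalence constants $\sqrt{C_1/C_2}$ that your energy-norm argument picks up when $b$ is not literally the $H^k$ inner product.

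Your hesitation at the end is well placed. The function $\mu\mapsto \alpha^2/(\mu+\alpha)^2$ is decreasing, so the controlling spectral quantity is indeed the \emph{smallest} eigenvalue of $L$. The paper defines $\lambda_N:=\max\sigma(L)$ but then, in passing from the displayed operator identity to $\frac{\alpha}{\lambda_N}\|(\mathcal{A}^*\mathcal{A}+\alpha\mathcal{B})^{-1}\mathcal{A}^*\mathcal{A}f_\t\|_k$, effectively uses $\|L^{-1}\|=1/\lambda_N$; this only holds if $\lambda_N$ denotes the smallest eigenvalue. So your diagnosis is exactly right: the symbol $\lambda_N$ must be read as $\min\sigma(L)$ for the stated bound to hold, and the ``$\max$'' in the paper's definition appears to be a slip.
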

		\begin{proof}
	By triangle inequality, we have
			\begin{equation}\label{error.triangle}
			\|  f_{\t}-f_\a \|_k \le \| f_{\t}-P_Nf_{\t}\|_k +\|P_Nf_{\t} - f_\a \|_k.
			\end{equation}
	To estimate the last term in the right hand side of \eqref{error.triangle}, consider \eqref{def.projection}. From \eqref{basis_eta}, \eqref{weak.forwardoperator}, \eqref{aux} and \eqref{measurement}, we obtain
			\begin{equation}\label{bofft}
		b(f_{\t},\eta_i)=(\xi_i,f_{\t})=a(\xi_i, Tf_{\t})=(\phi_i,Tf_{\t})=(\Phi Tf_{\t})_i
			\end{equation}
		for $i=1,\cdots,N.$ Since $	b(P_Nf_{\t},\eta_i)=b(f_{\t},\eta_i)$ for all $i=1,\cdots,N$, it follows from the above that $(\phi_i,Tf_{\t})=(\phi_i,TP_Nf_{\t})$ and hence
			\begin{equation}\label{PNf.f}
	\Phi TP_Nf_{\t}=m=\Phi Tf_{\t}.
			\end{equation}
		As a consequence, \eqref{weak.inverse} becomes
		\begin{align*}
		(\Phi Tf_\a)^*(\Phi Tg)+\alpha b(f_\a,g)&=(\Phi TP_Nf_{\t})^*(\Phi Tg) \quad \forall\, g \in F.
		\end{align*}
		Hence,
		\begin{equation}\label{eq.PNftfa}
		(\Phi T(P_Nf_{\t}-f_\a))^*(\Phi Tg)+\alpha b(P_Nf_{\t}-f_\a,g)=\alpha b(P_Nf_{\t},g).
		\end{equation}
Consider the right hand side of \eqref{eq.PNftfa}. Since the range of $P_N$ is the span of $\{\eta_1,\cdots,\eta_N\}$, writing $P_N g:=\sum_{i=1}^{N}c_i\eta_i$, by \eqref{def.projection},\eqref{bofft} and \eqref{PNf.f} , we have
\begin{align*}
b(P_Nf_{\t},g)&=b(P_Nf_{\t},P_N g)=\sum_{i=1}^{N}c_ib(P_Nf_{\t},\eta_i)\\
&=\sum_{i=1}^{N} c_i(\Phi TP_Nf_{\t})_i=c^*\Phi TP_Nf_{\t}=c^*\Phi Tf_{\t}.
\end{align*}
Therefore, \eqref{eq.PNftfa} can be expressed as
\begin{equation}\label{eq.PNftruefalpha}
	(\Phi T(P_Nf_{\t}-f_\a))^*(\Phi Tg)+\alpha b(P_Nf_{\t}-f_\a,g)=\alpha c^*\Phi Tf_{\t}.
\end{equation}
The coefficient vector $c$ of $P_N g$ is estimated now. Analogous arguments as in \eqref{bofft} leads to $ b(g,\eta_i)=(\Phi Tg)_i$ for $i=1,\cdots,N.$ Also, $b(P_Ng,\eta_i)=\sum_{j=1}^{N} c_jb(\eta_j,\eta_i)$. The definition of $P_N$ given by \eqref{def.projection} then implies
$$c=L^{-1}\Phi Tg.$$
%
This with \eqref{eq.PNftruefalpha} result in
\begin{align*}
	(\Phi T(P_Nf_{\t}-f_\a))^*(\Phi Tg)+\alpha b(P_Nf_{\t}-f_\a,g)&=\alpha(\Phi Tg)^*L^{-1}\Phi Tf_{\t}.
\end{align*}	
In operator approach, the above equation reduces to
$$(\cA^*\cA+\alpha \cB)(P_Nf_{\t}-f_\a)=\alpha \cA^*L^{-1}\cA f_{\t}.$$
Since $\cA^*\cA+\alpha \cB$ is invertible (\eqref{operator}-\eqref{bound.falpha}), we get
$$\|P_Nf_{\t}-f_\a \|_k=\frac{\alpha}{ \lambda_N}\|(\cA^*\cA+\alpha \cB)^{-1}\cA^*\cA f_{\t} \|_k\le \frac{\alpha}{\lambda_N}\|f_{\t}\|_k. $$
The combination of this and \eqref{error.triangle} concludes the proof.
		\end{proof}
		Suppose the measurement $m$ is noisy. That is, for $\delta>0$, we may have $m^\delta$ in place of $m$ such that $\|m-m^\delta\| \le \delta$.%
		
		\begin{theorem}\label{thm.f-falphadelta}Let $f_{\t} \in F$ and $m=\Phi Tf_{\t}$. Let $P_Nf_{\t}$ be as in \eqref{def.projection} and $f_{\a}^\delta$ solves \eqref{weak.inverse} with noisy measurement $m^\delta$. Then, 
		\begin{equation}\label{error.conts}
	\|  f_{\t}-f_\a^\delta \|_k \le \| f_{\t}-P_Nf_{\t}\|_k +\frac{\alpha}{\lambda_N}\|f_{\t}\|_k+\frac{\delta}{2\sqrt{\alpha}}.
		\end{equation}
		\end{theorem}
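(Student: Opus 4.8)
The plan is to compare the noisy reconstruction $f_\a^\delta$ against the exact‑data reconstruction $f_\a$ already controlled by Theorem \ref{thm.err}, and to absorb the extra term $\frac{\delta}{2\sqrt{\alpha}}$ entirely into the noise propagation of the regularised inverse. First I would introduce $f_\a$ solving \eqref{weak.inverse} with the exact data $m=\Phi Tf_{\t}$ alongside $f_\a^\delta$ solving the same variational problem with $m^\delta$ (well posed by Lemma \ref{lem.bilinearB}), and split by the triangle inequality
\begin{equation*}
\| f_{\t}-f_\a^\delta\|_k \le \| f_{\t}-f_\a\|_k + \| f_\a-f_\a^\delta\|_k .
\end{equation*}
Theorem \ref{thm.err} bounds the first summand by $\| f_{\t}-P_Nf_{\t}\|_k+\frac{\alpha}{\lambda_N}\|f_{\t}\|_k$, so the whole statement reduces to proving the stability bound $\| f_\a-f_\a^\delta\|_k\le\frac{\delta}{2\sqrt{\alpha}}$.

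For the second summand I would pass to the operator form \eqref{operator}. Since $f_\a$ and $f_\a^\delta$ satisfy $(\cA^*\cA+\alpha\cB)f_\a=\cA^*m$ and $(\cA^*\cA+\alpha\cB)f_\a^\delta=\cA^*m^\delta$ respectively, subtracting gives
\begin{equation*}
(\cA^*\cA+\alpha\cB)(f_\a-f_\a^\delta)=\cA^*(m-m^\delta),
\end{equation*}
and the invertibility of $\cA^*\cA+\alpha\cB$ established in \eqref{operator}–\eqref{bound.falpha} yields $f_\a-f_\a^\delta=(\cA^*\cA+\alpha\cB)^{-1}\cA^*(m-m^\delta)$. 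The task is therefore the uniform operator estimate $\|(\cA^*\cA+\alpha\cB)^{-1}\cA^*\|\le\frac{1}{2\sqrt{\alpha}}$, after which $\|m-m^\delta\|\le\delta$ immediately closes the bound.

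To obtain this estimate I would exploit the positive self‑adjoint structure of $\cA^*\cA$ together with the functional calculus: through the spectral decomposition of $\cA^*\cA$ the operator $(\cA^*\cA+\alpha\cB)^{-1}\cA^*$ acts as the scalar multiplier $\frac{\sqrt{t}}{t+\alpha}$ on the spectrum $t\ge 0$, and $\sup_{t\ge 0}\frac{t}{(t+\alpha)^2}=\frac{1}{4\alpha}$ (the maximum being attained at $t=\alpha$), so the operator norm is exactly $\frac{1}{2\sqrt{\alpha}}$. This is the classical Tikhonov noise‑amplification estimate; combining it with the two bounds above produces \eqref{error.conts}.

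I expect the main obstacle to be making this spectral argument rigorous relative to the bilinear form $b(\cdot,\cdot)$ rather than the plain $H^k$ inner product. The multiplier form of $(\cA^*\cA+\alpha\cB)^{-1}\cA^*$ is transparent precisely when $\cB$ is the identity of the working inner product on $F$, i.e. when $F$ is viewed with the $b$‑induced inner product in which $\cB$ is positive and self‑adjoint; the clean constant $\frac{1}{2\sqrt{\alpha}}$ in \eqref{error.conts} signals that the estimate is carried out in that geometry. If one instead insists on the $H^k$ inner product, the symmetrisation $\widetilde{\cA}=\cA\,\cB^{-1/2}$ reduces the problem to the standard Tikhonov operator $(\widetilde{\cA}^*\widetilde{\cA}+\alpha I)^{-1}\widetilde{\cA}^*$ at the price of tracking the equivalence constants $C_1,C_2$ of $b$, which would otherwise intrude on the constant $\frac{1}{2\sqrt{\alpha}}$.
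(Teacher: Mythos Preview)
Your proposal is correct and follows essentially the same route as the paper: the same triangle-inequality split, Theorem~\ref{thm.err} for $\|f_{\t}-f_\a\|_k$, and the operator identity $f_\a-f_\a^\delta=(\cA^*\cA+\alpha\cB)^{-1}\cA^*(m-m^\delta)$ combined with the spectral bound $\sup_{t\ge0}\sqrt{t}/(t+\alpha)=1/(2\sqrt{\alpha})$ for the noise term. The paper simply invokes this last estimate by citing spectral theory \cite[Corollary~4.4]{MTN_book_LOE}; your discussion of the $\cB$ versus identity issue (working in the $b$-inner product or symmetrising via $\cA\cB^{-1/2}$) is a point the paper leaves implicit in that citation.
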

		\begin{proof}
			The triangle inequality leads to
			\begin{equation}\label{err.tri1}
			\|  f_{\t}-f_\a^\delta \|_k \le \|  f_{\t}-f_{\a}\|_k+\|f_{\a} - f_\a^\delta  \|_k.
			\end{equation}
			Given the true source field $f_{\t}$, the noisy measurement $m^\delta$ with additive noise $n$ is obtained as
			$$ m^\delta=m+n$$
			where $m=\Phi Tf_{\t}$ and $\|n\| \le \delta$. The definition of \eqref{operator} and spectral theory \cite[Corollary 4.4]{MTN_book_LOE} show that
			\begin{align*}
		\|f_{\a} - f_\a^\delta \|_k &=\|(\mathcal{A^*A}+\alpha \mathcal{B})^{-1}\mathcal{A^*}m-(\mathcal{A^*A}+\alpha \mathcal{B})^{-1}\mathcal{A^*}m^\delta\|_k \le \frac{\delta}{2\sqrt{\alpha}}.
			\end{align*}
%
%
The combination of Theorem \ref{thm.err} and the above inequality leads to the desired estimate in \eqref{error.conts}. 
		\end{proof}}}	
\begin{remark}\label{rem.regularization}
By choosing the regularization parameter $\alpha$ appropriately, depending on the data error $\delta$, we obtain an estimate for $\|f_{\rm true} - f_\alpha^\delta\|_k$ of the order of  $\|f_{\rm true} - P_Nf_{\rm true}\|_k$. From the expression on right hand side of \eqref{error.conts}, it is clear that an optimal choice of $\alpha$ depending on $\delta$ for a fixed $N$ is $ \alpha \sim  \delta^{2/3}$. For this choice, the estimate in \eqref{error.conts} leads to 
$$\|f_{\rm true} - f_\alpha^\delta\|_k 
\leq  
\|f_{\rm true} - P_Nf_{\rm true}\|_k + C \delta^{2/3}$$
so that for small enough data error $\delta$, we obtain, 
$$\|f_{\rm true} - f_\alpha^\delta\|_k \leq  C \|f_{\rm true} - P_Nf_{\rm true}\|_k.$$
\end{remark} 
 We are interested in the numerical approximation of the reconstructed source field $f_\a$ and the error estimate which are discussed below.
%
 \subsection{Discretisation of the inverse problem}
This section deals with the discretisation of the inverse problem \eqref{weak.inverse} which solves $f_\a$. The function $f_\a$ is in the infinite-dimensional space $F$, which is
typically impossible to completely handle in a computer based environment. We
need to come down to a finite-dimensional space by replacing $F$ with a finite dimensional space, which is not necessarily a subspace of $F$.

\medskip

 Let $\cT_\tau$ be a regular triangulation of $\overline{\O}$ with mesh size $\tau$ (see Section \ref{sec:fem} for details). We discretise $F$ with a piecewise polynomial finite element space $F^\tau$. For approximation using conforming FEMs,
 $$F^\tau:=F^\C=\{f \in F\,| \,f_{|K} \in \cP_q(K) \quad \forall\, K \in \cT_\tau\}$$ 
 and for nonconforming FEMs,
  \[
 F^\tau:=F^\NC=\left \{ \begin{aligned}&f \in L^2(\O)\,| \,f_{|K} \in \cP_q(K) \quad \forall\, K \in \cT_\tau,
 \text{degrees of freedom consist of} \\
 & \text{point values of the functions and its derivatives upto
 	certain order}
 \end{aligned}
 \right\}
 \]
 where $\cP_q(K)$ is the space of polynomials of degree $\le q$ on element $K$. The following are a few special cases:
 \begin{itemize}
 	\item For $k=0$, $F^\tau \subset L^2(\O)=F$.
 	\item For $k=1$ with $F=H^1_0(\O)$, one example of $F^\C$ (resp. $F^\NC)$ is given by the conforming (resp. nonconforming) $\cP_1$ finite element space \cite{ciarlet1978finite,CCDGMS}.
\item For $k=2$ with $F=V$, we can take $F^\C=V_\C$, the Argyris or the \BFS finite element and $F^\NC=V_\M$, the Morley finite element space.
 \end{itemize}
 Define $\|\cdot\|_{k,\tau}^2:=\|D_{\rm NC}^k\cdot\|^2$, where $D^k_\NC$ denotes the piecewise $k^{th}$ derivatives, and for conforming FEMs, $D^k_\NC=D^k$.
 \begin{assumption}[Interpolation of the reconstruction space]\label{assumption.inverseinterpolation} There exists an interpolation operator $I^\tau:F \to F^\tau$ such that
 	$$\| g-I^\tau g \|_{k,\tau} \le C\tau^{\min\{q+1,t\}-k}\|g\|_t \quad \forall\, g \in F \cap H^t(\O),$$
 	where $C>0$ is independent of $\tau$ and $g$.
 \end{assumption}
The well-known property of the interpolation operator $I^\tau$ in the above assumption can be found in \cite{ciarlet1978finite}. The particular case with $F=V$ and $F^\tau=V_\C$ (resp. $V_\M$) is stated in Lemma \ref{interpolant.cfem} (resp. Lemma \ref{Morley_Interpolation}).

\smallskip

The discrete problem corresponding to the weak formulation \eqref{weak.inverse} seeks 
$f_{\alpha,h}^\tau\in F^\tau$ such that
\begin{equation}\label{weak.inverse.modified}
B^\tau(f_{\alpha,h}^\tau,g^\tau)=l^\tau(g^\tau) \quad \forall\, g^\tau \in F^\tau,
\end{equation}
where for all $f^\tau, g^\tau \in F^\tau$,
\begin{equation}\label{weak.inverse.modified.Bandl}
B^\tau(f^\tau,g^\tau)=(\Phi T_h f^\tau)^*(\Phi T_h g^\tau)+\alpha b^\tau(f^\tau,g^\tau) \mbox{ and } l^\tau(g^\tau)=m^*(\Phi T_h g^\tau).
\end{equation}
The only difference of \eqref{weak.inverse.modified} with \eqref{weak.inverse} is that the operator $T$ (resp. the continuous bilinear form $b(\cdot,\cdot)$) is replaced by $T_h$ (resp. the discrete bilinear form $b^\tau(\cdot,\cdot)$) and hence computable.

\begin{assumption}\label{assumption.companion}
	We assume that the discrete bilinear form $b^\tau(\cdot,\cdot)$ is continuous in $F+F^\tau$ and coercive in $F^\tau$ with respect to the norm $\|\cdot\|_{k,\tau}$. Also, there exists an operator $E^\tau:F^\tau \rightarrow F$ such that, for $f^\tau,g^\tau \in F^\tau$,
\begin{itemize}
	\item[$(a)$] 	$\sup_{\| f^\tau\|_{k,\tau}=1}\| f^\tau-E^\tau f^\tau\| = \delta_6$
	\item[$(b)$] $\sup_{\| f^\tau\|_{k,\tau}=1}	\| f^\tau-E^\tau f^\tau\|_{k,\tau}= \delta_7$
\item[$(c)$]	 $\sup_{\| f^\tau\|_{k,\tau}=1}\sup_{\| g^\tau\|_{k,\tau}=1}b^\tau(f^\tau-E^\tau f^\tau, g^\tau) =\delta_8$,
\end{itemize}
where $\delta_6,\delta_7,\delta_8$ are non-negative parameters.	
\end{assumption}
For conforming FEMs, $F^\tau \subset F$ and $b^\tau(\cdot,\cdot)=b(\cdot,\cdot)$. In this case, $E^\tau=\rm{Id}$  and hence the above assumption is trivially satisfied. Assumption \ref{assumption.companion} is discussed in details for conforming and nonconforming FEMs in Section \ref{sec.feminverse}.

\smallskip

The proof of the next lemma follows similar to Lemma \ref{lem.bilinearB} and hence it is skipped.
\begin{lemma}\label{lem.bilinearBh}
	The bilinear form $B^\tau(\cdot,\cdot)$ is continuous and coercive in $F^\tau$.
\end{lemma}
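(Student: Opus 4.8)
The statement asserts continuity and coercivity of $B^\tau(\cdot,\cdot)$ on $F^\tau$ with respect to $\|\cdot\|_{k,\tau}$. The plan is to mirror exactly the structure used for the continuous bilinear form $B(\cdot,\cdot)$ in Lemma \ref{lem.bilinearB}, replacing the continuous objects by their discrete counterparts. Recall from \eqref{weak.inverse.modified.Bandl} that $B^\tau(f^\tau,g^\tau)=(\Phi T_h f^\tau)^*(\Phi T_h g^\tau)+\alpha\, b^\tau(f^\tau,g^\tau)$, so there are two summands to control, and the reasoning for each is essentially independent.

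For \textbf{continuity}, I would first bound the data-fidelity term. Apply a Cauchy--Schwarz inequality in $\R^N$ to $(\Phi T_h f^\tau)^*(\Phi T_h g^\tau)\le\|\Phi T_h f^\tau\|\,\|\Phi T_h g^\tau\|$, then invoke the discrete measurement stability \eqref{stability.measurement}, namely $\|\Phi T_h v\|\le C\|v\|$, to get a bound by $C^2\|f^\tau\|\,\|g^\tau\|$. These are $L^2$ norms, so the remaining step is to absorb them into $\|\cdot\|_{k,\tau}$; since $F^\tau$ is finite-dimensional and we only need a continuity constant (no $h$- or $\tau$-uniformity is claimed in the statement), the $L^2$ norm is controlled by $\|\cdot\|_{k,\tau}$ on this space. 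The second summand is handled directly by the continuity of $b^\tau(\cdot,\cdot)$ on $F+F^\tau$ assumed in Assumption \ref{assumption.companion}. Summing the two bounds gives $B^\tau(f^\tau,g^\tau)\lesssim\|f^\tau\|_{k,\tau}\|g^\tau\|_{k,\tau}$.

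For \textbf{coercivity}, I would discard the nonnegative data term: since $(\Phi T_h f^\tau)^*(\Phi T_h f^\tau)=\|\Phi T_h f^\tau\|^2\ge 0$, we have $B^\tau(f^\tau,f^\tau)\ge\alpha\, b^\tau(f^\tau,f^\tau)$, and then the coercivity of $b^\tau(\cdot,\cdot)$ in $F^\tau$ (again from Assumption \ref{assumption.companion}) yields $B^\tau(f^\tau,f^\tau)\ge\alpha C_2\|f^\tau\|_{k,\tau}^2$. The main subtlety, and the one place I would be careful rather than routine, is the treatment of the $L^2$-versus-$\|\cdot\|_{k,\tau}$ norm comparison in the continuity argument: unlike the continuous case where \eqref{h-2tol2} gives a clean $\|f\|_{-2}\le\|f\|_k$, here one must either argue that $\|\Phi T_h\cdot\|$ is controlled by $\|\cdot\|_{k,\tau}$ on the finite-dimensional $F^\tau$ or reproduce a measurement-stability estimate in the discrete norm. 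Since the paper explicitly notes the proof is analogous to Lemma \ref{lem.bilinearB} and omits it, I expect this is the only nontrivial bookkeeping step; everything else is a direct transcription using Cauchy--Schwarz, \eqref{stability.measurement}, and Assumption \ref{assumption.companion}.
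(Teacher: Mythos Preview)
Your proposal is correct and matches the paper's intended approach exactly: the paper states that the proof ``follows similar to Lemma~\ref{lem.bilinearB}'' and skips it, and your outline (Cauchy--Schwarz on the measurement term via \eqref{stability.measurement}, continuity/coercivity of $b^\tau$ from Assumption~\ref{assumption.companion}, and nonnegativity of $\|\Phi T_h f^\tau\|^2$ for coercivity) is precisely that analogy. The norm-comparison step you flag can in fact be made $\tau$-uniform in all the paper's examples via discrete Poincar\'e/Friedrichs inequalities for the nonconforming spaces, but since the lemma as stated does not assert uniformity your finite-dimensional argument already suffices.
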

By Lax-Milgram Lemma, there exists a unique solution $f_{\alpha,h}^\tau$ to \eqref{weak.inverse.modified} and the coercivity result of $B^\tau(\cdot,\cdot)$ shows $\|f_{\alpha,h}^\tau\|_{k,\tau} \lesssim \|m\|$.

\subsection{Error estimate for the reconstructed regularised solution}
This section deals with the error estimate between the reconstructed regularised approximation of source field $f_\alpha \in F$ and its numerical approximation $f_{\alpha,h}^\tau \in F^\tau$ for different regularisation schemes, i.e. for $k \in \N_0$.

\smallskip

The solution $f_\a$ to \eqref{weak.inverse} will often have more regularity than $H^k(\O)$, which is stated in the following assumption, but the actual regularity is problem specific.

\smallskip

\begin{assumption}[Regularity of the source field reconstruction]\label{assumption.f_reg}
	There is an $\ell\ge k$ such that the solution $f_\alpha$ to \eqref{weak.inverse} is in $H^{\ell}(\O)$ and it holds that
	$$\|f_\alpha\|_{\ell} \le C\|m \|,$$
	where $C>0$ is independent of $m$.	
\end{assumption}
From \eqref{def.falpha.basis}, it is observed that the regularity of $f_\a$ is determined by the regularity of the reconstruction basis functions $\eta_i$ which is the solution of \eqref{basis_eta} ($b(\eta_i,g)=(\xi_i,g) \quad \forall\, g \in F.$). Consequently, the regularity of $f_\a$ depends on the regularity of $\xi_i$ and the bilinear form $b(\cdot,\cdot)$. For instance, if $b(f,g)=(f,g)$ with $k=0$, then $\eta_i=\xi_i$ and a combination of \eqref{def.falpha.basis} and Assumption \ref{assumption} yields $f_\a \in  H^r(\O)$.

To simplify the notation, define $$\delta_9:=\tau^{\min\{q+1,\ell\}-k}. $$
Recall the notation $\widetilde{\beta}(r,s)$ defined in \eqref{def.widetilde}. 
\begin{theorem}[Total error]\label{thm.totalerror}
	Let $f_\a$ be the solution to \eqref{weak.inverse} and $f_{\a,h}^\tau$ be the solution to \eqref{weak.inverse.modified}. Suppose Assumption \ref{assumption.regularity.forward} holds with $s$, Assumption \ref{assumption.forwardoperators}, Assumption \ref{assumption.f_reg} (resp. \ref{assumption}) holds with $\ell$ (resp. $r$), Assumptions \ref{assumption.inverseinterpolation} and \ref{assumption.companion} hold. Then there exists a constant $C>0$ independent of $m, h$ and $\tau$ such that
	$$\|f_\alpha-f_{\alpha,h}^\tau \|_{k,\tau}\le C(\alpha,N) \big(\delta_6+\delta_8(1+\delta_9)+(1+\delta_7)(\delta_9+\widetilde{\beta}(r,s) \big)\|m\|.$$	
\end{theorem}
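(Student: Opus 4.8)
The plan is to bound $\|f_\alpha-f_{\alpha,h}^\tau\|_{k,\tau}$ by interpolating through $I^\tau f_\alpha$ and estimating the discrete error $\|I^\tau f_\alpha-f_{\alpha,h}^\tau\|_{k,\tau}$ via the coercivity of $B^\tau(\cdot,\cdot)$. First I would apply the triangle inequality
\[
\|f_\alpha-f_{\alpha,h}^\tau\|_{k,\tau}\le \|f_\alpha-I^\tau f_\alpha\|_{k,\tau}+\|I^\tau f_\alpha-f_{\alpha,h}^\tau\|_{k,\tau},
\]
and control the first term by Assumption \ref{assumption.inverseinterpolation} together with Assumption \ref{assumption.f_reg}, giving a contribution of order $\delta_9\|m\|$ since $f_\alpha\in H^\ell(\O)$ with $\|f_\alpha\|_\ell\le C\|m\|$. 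For the second term, set $w^\tau:=f_{\alpha,h}^\tau-I^\tau f_\alpha\in F^\tau$ and use the coercivity of $B^\tau$ in $F^\tau$ to write $C\|w^\tau\|_{k,\tau}^2\le B^\tau(w^\tau,w^\tau)=B^\tau(f_{\alpha,h}^\tau,w^\tau)-B^\tau(I^\tau f_\alpha,w^\tau)$, then substitute the discrete equation \eqref{weak.inverse.modified} for the first piece.

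The core of the argument is to compare the two bilinear forms. After using $B^\tau(f_{\alpha,h}^\tau,w^\tau)=l^\tau(w^\tau)=m^*(\Phi T_h w^\tau)$, I would rewrite $B^\tau(I^\tau f_\alpha,w^\tau)$ using the definition \eqref{weak.inverse.modified.Bandl} and insert the continuous equation \eqref{weak.inverse} satisfied by $f_\alpha$ (with $m=\Phi Tf_{\t}$) as an intermediary. The resulting error splits into three kinds of terms: (i) differences between $T$ and $T_h$ applied to the relevant arguments, controlled by Theorem \ref{thm.forwarderror} and the bound \eqref{stability.measurement}, which produces the factor $\widetilde{\beta}(r,s)$; (ii) interpolation errors $f_\alpha-I^\tau f_\alpha$ carried through $b^\tau(\cdot,\cdot)$ and through $\Phi T_h$, producing the factor $\delta_9$; and (iii) the mismatch between $b$ and $b^\tau$ on $F^\tau$, where I would introduce the enrichment operator $E^\tau$ from Assumption \ref{assumption.companion} and split $b^\tau(I^\tau f_\alpha,w^\tau)$ using $E^\tau$, invoking properties $(a)$--$(c)$ to extract the factors $\delta_6$, $\delta_7$, $\delta_8$. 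At each stage the stability bounds $\|f_{\alpha,h}^\tau\|_{k,\tau}\lesssim\|m\|$, $\|f_\alpha\|_k\le\|f_{\t}\|_k$, and $\|I^\tau f_\alpha\|_{k,\tau}\lesssim(1+\delta_7)\|f_\alpha\|_\ell$ should be used to replace norms of the solutions by $\|m\|$, absorbing $\alpha$- and $N$-dependent constants into $C(\alpha,N)$.

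The hard part will be the consistency bookkeeping in term (iii): because $F^\tau$ is not a subspace of $F$ in the nonconforming case, one cannot directly plug $w^\tau$ into the continuous form \eqref{weak.inverse}, so the enrichment $E^\tau w^\tau\in F$ must be inserted and the commutator $b(f_\alpha,E^\tau w^\tau-w^\tau)$ together with $b^\tau(E^\tau I^\tau f_\alpha-I^\tau f_\alpha,\cdot)$ handled carefully. I expect the cleanest route is to first prove a discrete analogue of the forward estimate, namely that $\Phi T_h$ acting on $F^\tau$ inherits the bound $\|\Phi Tg-\Phi T_h g\|\lesssim\widetilde{\beta}(r,s)\|g\|$ uniformly, so that Theorem \ref{thm.forwarderror} can be applied verbatim to the reconstruction arguments. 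Combining the interpolation estimate, the $T$ versus $T_h$ estimate, and the three enrichment estimates, and collecting coefficients, yields precisely
\[
\|f_\alpha-f_{\alpha,h}^\tau\|_{k,\tau}\le C(\alpha,N)\big(\delta_6+\delta_8(1+\delta_9)+(1+\delta_7)(\delta_9+\widetilde{\beta}(r,s))\big)\|m\|,
\]
with the grouping of $(1+\delta_7)$ reflecting the stability of $I^\tau f_\alpha$ in the $\|\cdot\|_{k,\tau}$ norm and the $(1+\delta_9)$ factor on $\delta_8$ arising from carrying $f_\alpha$ through both $b^\tau$ and its interpolant.
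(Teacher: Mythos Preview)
Your proposal is correct and follows essentially the same route as the paper: triangle inequality through $I^\tau f_\alpha$, coercivity of $B^\tau$ on the discrete residual, then the three-term splitting $B^\tau(f_\alpha-f_{\alpha,h}^\tau,f^\tau)=B^\tau(f_\alpha,f^\tau-E^\tau f^\tau)+\big(B^\tau(f_\alpha,E^\tau f^\tau)-B(f_\alpha,E^\tau f^\tau)\big)+\big(l(E^\tau f^\tau)-l^\tau(f^\tau)\big)$, with Theorem~\ref{thm.forwarderror} supplying the $\widetilde{\beta}(r,s)$ contributions. One small bookkeeping correction: the factor $(1+\delta_7)$ does not come from the stability of $I^\tau f_\alpha$ but from that of the enriched test function, $\|E^\tau f^\tau\|_k\le(1+\delta_7)\|f^\tau\|_{k,\tau}$, while the factor $(1+\delta_9)$ on $\delta_8$ arises from $\|I^\tau f_\alpha\|_{k,\tau}\le(1+\delta_9)\|f_\alpha\|_\ell$; also, the paper applies $E^\tau$ only to the test function and never forms $E^\tau I^\tau f_\alpha$.
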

\begin{proof}
The triangle inequality with $I^\tau f_{\alpha}$ and Assumption \ref{assumption.inverseinterpolation} lead to
\begin{align}
\|f_\alpha-f_{\alpha,h}^\tau \|_{k,\tau} &\le \|f_\alpha-I^\tau f_{\alpha} \|_{k,\tau} +\|I^\tau f_{\alpha}-f_{\alpha,h}^\tau \|_{k,\tau}\nonumber\\
&\lesssim \delta_9\|f_\alpha\|_\ell+\|I^\tau f_{\alpha}-f_{\alpha,h}^\tau \|_{k,\tau}.\label{trig}
\end{align}
Consider $\|f^\tau\|_{k,\tau}:= \|I^\tau f_\alpha-f_{\alpha,h}^\tau \|_{k,\tau}$. Notice that $f^\tau \in F^\tau$. The coercivity and continuity of $B^\tau(\cdot,\cdot)$ in Lemma \ref{lem.bilinearBh}, Assumption \ref{assumption.inverseinterpolation} and simple manipulation show
\begin{align}
\alpha C_2^\tau\|f^\tau \|_{k,\tau}^2\le B^\tau(f^\tau, f^\tau )&=B^\tau(I^\tau f_{\alpha}-f_{\alpha} ,f^\tau )+B^\tau(f_{\alpha}-f_{\alpha,h}^\tau ,f^\tau )\nonumber \\
&\le (C(N)+\alpha C_1^\tau) \| I^\tau f_{\alpha}-f_{\alpha}\|_{k,\tau} \|f^\tau\|_{k,\tau}\nonumber\\
&\qquad +B^\tau(f_{\alpha}-f_{\alpha,h}^\tau ,f^\tau )\nonumber\\
&\le C(C(N)+\alpha C_1^\tau) \delta_9\|f_\alpha\|_\ell \| f^\tau\|_{k,\tau}\nonumber\\
&\qquad +B^\tau(f_{\alpha}-f_{\alpha,h}^\tau ,f^\tau ),\label{trig1}
\end{align}
where $C_1^\tau$ (resp. $C_2^\tau$) denotes the continuity (resp. coercivity) constant of $b^\tau(\cdot,\cdot)$. Since $f^\tau \in F^\tau$, the last term in the right hand side of \eqref{trig1} can be rewritten as
\begin{align}\label{t1t2t3}
B^\tau(f_{\alpha}-f_{\alpha,h}^\tau ,f^\tau )&=B^\tau(f_{\alpha} ,f^\tau - E^\tau f^\tau )+B^\tau(f_{\alpha} , E^\tau f^\tau )-B(f_{\alpha} , E^\tau f^\tau )\nonumber\\
&\quad +B(f_{\alpha} , E^\tau f^\tau )-B^\tau(f_{\alpha,h}^\tau ,f^\tau )\nonumber\\
&=B^\tau(f_{\alpha} ,f^\tau - E^\tau f^\tau )+(B^\tau(f_{\alpha} , E^\tau f^\tau )-B(f_{\alpha} , E^\tau f^\tau ))\nonumber\\
&\quad +(l(E^\tau f^\tau )-l^\tau(f^\tau ))=:T_1+T_2+T_3
\end{align}
with \eqref{weak.inverse} and \eqref{weak.inverse.modified} in the last step. From Assumptions \ref{assumption.companion} $(c)$ and \ref{assumption.inverseinterpolation}, $ b^\tau(I^\tau f_\alpha, f^\tau - E^\tau f^\tau)\le C\delta_8(\delta_9+1)\|f_\alpha\|_\ell\|f^\tau\|_{k,\tau}.$ This, the definition of $B^\tau(\cdot,\cdot)$, a Cauchy-Schwarz inequality, \eqref{stability.measurement}, Assumptions \ref{assumption.inverseinterpolation},  \ref{assumption.companion} $(a).(b)$ and the continuity of $b^\tau(\cdot,\cdot)$ show that
\begin{align}
T_1&\le (\Phi T_h f_\alpha)^*\Phi T_h(f^\tau - E^\tau f^\tau)+\alpha b^\tau(f_\alpha-I^\tau f_\alpha, f^\tau - E^\tau f^\tau)\nonumber\\
&\qquad + C\delta_8(\delta_9+1)\|f_\alpha\|_\ell\|f^\tau\|_k\nonumber\\
&\le C\|f_\alpha\|\|f^\tau - E^\tau f^\tau\|+\alpha C^\tau_1 \| f_\alpha-I^\tau f_\alpha\|_{k,\tau} \|  f^\tau - E^\tau f^\tau\|_{k,\tau}\nonumber\\
&\qquad+ C\delta_8(\delta_9+1)\|f_\alpha\|_\ell\|f^\tau\|_{k,\tau} \nonumber\\
&\le C(\alpha,N) \big(\delta_6+ \delta_9\delta_7+\delta_8(\delta_9+1)\big)\|f_\alpha\|_\ell\|f^\tau\|_{k,\tau}.\nonumber
\end{align}
Since $f_\alpha, E^\tau f^\tau \in F$, $b^\tau(f_\alpha, E^\tau f^\tau)=b(f_\alpha, E^\tau f^\tau)$. The triangle inequality with $f^\tau$ and Assumption \ref{assumption.companion} $(b)$ read $\|E^\tau f^\tau\|_k \le (\delta_7+1)\|f^\tau\|_{k,\tau}$. This, the definition of $B^\tau(\cdot,\cdot)$ and $B(\cdot,\cdot)$, Cauchy-Schwarz inequalities, Theorem \ref{thm.forwarderror} and \eqref{stability.measurement} prove
\begin{align}
T_2&=(\Phi T_h f_\alpha)^* \Phi T_h (E^\tau f^\tau)-(\Phi T f_\alpha)^* \Phi T (E^\tau f^\tau)\nonumber\\
&=(\Phi (T_h-T)f_\alpha)^* \Phi T_h (E^\tau f^\tau)+(\Phi T f_\alpha)^*\Phi (T_h-T) (E^\tau f^\tau)\nonumber\\
&\le \|\Phi (T_h-T)f_\alpha\|\| \Phi T_h (E^\tau f^\tau)\|+\|\Phi T f_\alpha\|\|\Phi (T_h-T) (E^\tau f^\tau)\|\nonumber\\
&\le C(N) (\delta_7+1)\widetilde{\beta}(r,s)\|f_\alpha\|_k\|f^\tau\|_{k,\tau}.\nonumber
\end{align}
Theorem \ref{thm.forwarderror}, \eqref{stability.measurement} and Assumption \ref{assumption.companion} $(a). (b)$ result in
\begin{align}
T_3&=m^*(\Phi (T-T_h)(E^\tau f^\tau)-\Phi T_h(f^\tau-E^\tau f^\tau ))\nonumber\\
&\le \|m\|\big(\|\Phi (T-T_h)(E^\tau f^\tau) \|+\| \Phi T_h(f^\tau-E^\tau f^\tau )\|\big)\nonumber\\
&\le C(N) ((\delta_7+1)\widetilde{\beta}(r,s)+\delta_6)\|m\|\|f^\tau\|_{k,\tau}.\nonumber
\end{align}
The combination of $T_1-T_3$ in \eqref{t1t2t3} and then in \eqref{trig1} lead to
$$\|I^\tau f_{\alpha}-f_{\alpha,h}^\tau \|_k\le C(\alpha,N)\big( \delta_6+\delta_8(1+\delta_9)+(1+\delta_7)(\delta_9+\widetilde{\beta}(r,s))\big)\|m\|$$
with Assumption \ref{assumption.f_reg} in the end. This and \eqref{trig} conclude the proof.
\end{proof}
The analogous arguments as in Theorem \ref{thm.totalerror} lead to the following result.

\begin{corollary}\label{cor.thmnoisy}
	Suppose the measurement is noisy. Let $f_\a^\delta$ (resp. $f_{\a,h}^{\delta,\tau}$) be the solution to \eqref{weak.inverse} (resp. \eqref{weak.inverse.modified}) with $m$ replaced by $m^\delta$. Then under the Assumptions of Theorem \ref{thm.totalerror}, the following error estimate hold:
	$$\|f_\alpha^\delta-f_{\alpha,h}^{\delta,\tau} \|_k\le  C \big(\delta_6+\delta_8(1+\delta_9)+(1+\delta_7)(\delta_9+\widetilde{\beta}(r,s) \big)\|m^\delta\|,$$
	where $C>0$ is a constant independent of $m^\delta, h$ and $\tau$. 
\end{corollary}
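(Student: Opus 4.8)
The plan is to transcribe the proof of Theorem~\ref{thm.totalerror} step for step, replacing the exact data $m$ by the noisy data $m^\delta$ everywhere, and to verify that no genuinely new term appears. The starting point is the observation that $f_\a^\delta$ and $f_{\a,h}^{\delta,\tau}$ solve \eqref{weak.inverse} and \eqref{weak.inverse.modified} with the \emph{same} bilinear forms $B(\cdot,\cdot)$ and $B^\tau(\cdot,\cdot)$ as before; only the right-hand sides change, now reading $l(g)=(m^\delta)^*(\Phi Tg)$ and $l^\tau(g^\tau)=(m^\delta)^*(\Phi T_h g^\tau)$. Hence the well-posedness from Lemmas~\ref{lem.bilinearB} and \ref{lem.bilinearBh}, the a priori bounds \eqref{stability.fr} and $\|f_{\a,h}^{\delta,\tau}\|_{k,\tau}\lesssim\|m^\delta\|$, and Assumption~\ref{assumption.f_reg} in the form $\|f_\a^\delta\|_\ell\le C\|m^\delta\|$ all transfer with $m$ replaced by $m^\delta$ and with the same constants, since each of these estimates depends on the data only through its norm.

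First I would apply the triangle inequality with the interpolant $I^\tau f_\a^\delta$, exactly as in \eqref{trig}, using Assumption~\ref{assumption.inverseinterpolation} to bound $\|f_\a^\delta-I^\tau f_\a^\delta\|_{k,\tau}\lesssim\delta_9\|f_\a^\delta\|_\ell$ and thereby reducing the problem to estimating $\|f^\tau\|_{k,\tau}$, where now $f^\tau:=I^\tau f_\a^\delta-f_{\a,h}^{\delta,\tau}\in F^\tau$. Invoking the coercivity and continuity of $B^\tau(\cdot,\cdot)$ from Lemma~\ref{lem.bilinearBh} together with Assumption~\ref{assumption.inverseinterpolation}, as in \eqref{trig1}, the estimate reduces to bounding $B^\tau(f_\a^\delta-f_{\a,h}^{\delta,\tau},f^\tau)$.

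Next I would split this quantity into the three terms $T_1,T_2,T_3$ precisely as in \eqref{t1t2t3}, where \eqref{weak.inverse} and \eqref{weak.inverse.modified} with right-hand side $m^\delta$ enter in the last step. Each term is controlled by the identical chain of inequalities: $T_1$ via Assumption~\ref{assumption.companion}$(a)$--$(c)$, \eqref{stability.measurement} and Assumption~\ref{assumption.inverseinterpolation}; $T_2$ via Theorem~\ref{thm.forwarderror}, the bound $\|E^\tau f^\tau\|_k\le(\delta_7+1)\|f^\tau\|_{k,\tau}$, and \eqref{stability.measurement}; and $T_3$, the only term in which the data appears explicitly, via Theorem~\ref{thm.forwarderror}, \eqref{stability.measurement} and Assumption~\ref{assumption.companion}$(a),(b)$, now producing the factor $\|m^\delta\|$ in place of $\|m\|$. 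Collecting $T_1$--$T_3$ and using $\|f_\a^\delta\|_\ell\le C\|m^\delta\|$ yields the asserted bound on $\|f^\tau\|_{k,\tau}$, which combined with the triangle inequality \eqref{trig} gives the claim.

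Because the argument is a step-for-step transcription, there is essentially no new obstacle. The only point demanding care is to confirm that every a priori estimate borrowed from the forward analysis---in particular \eqref{stability.fr}, \eqref{stability.measurement} and Assumption~\ref{assumption.f_reg}---is insensitive to whether the data is noisy, so that $\|m\|$ is replaced by $\|m^\delta\|$ uniformly and no extra term scaling like $\delta/\sqrt{\alpha}$ (as in Theorem~\ref{thm.f-falphadelta}) is generated. Here, unlike in that theorem, one compares the continuous and discrete reconstructions for the \emph{same} noisy data, so the noise enters only through the common factor $\|m^\delta\|$ and never as a difference of perturbed data.
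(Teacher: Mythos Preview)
Your proposal is correct and is exactly the approach the paper takes: the paper states only that ``analogous arguments as in Theorem~\ref{thm.totalerror}'' yield the result, and you have spelled out precisely those analogous arguments, with the data $m$ replaced by $m^\delta$ throughout. Your additional remarks confirming that the a~priori bounds and Assumption~\ref{assumption.f_reg} depend on the data only through its norm, and that no $\delta/\sqrt{\alpha}$ term arises because both reconstructions use the \emph{same} noisy data, are accurate and make the transcription rigorous.
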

The result of next theorem follows from the triangle inequality, Corollary \ref{cor.thmnoisy} and Theorem \ref{thm.f-falphadelta}.
\begin{theorem}\label{thm.finalerror}
	Let $f_{\t} \in F$ and $P_Nf_{\t}$ be a projection of $f_{\t}$ given by \eqref{def.projection}. Suppose the measurement is noisy. Let $f_{\a,h}^{\delta,\tau}$ be the solution to \eqref{weak.inverse.modified} with $m$ replaced by $m^\delta$. Then, under the assumptions of Theorem \ref{thm.totalerror}, there exists a constant $C>0$ independent of $h$, $\tau$and $m$ such that
	\begin{align*}
\|  f_{\t}-f_{\alpha,h}^{\delta,\tau} \|_k &\le \| f_{\t}-P_Nf_{\t}\|_k +\frac{\alpha}{\lambda_N}\|f_{\t}\|_k+\frac{\delta}{2\sqrt{\alpha}}\\
&\qquad +C\big(\delta_6+\delta_8(1+\delta_9)+(1+\delta_7)(\delta_9+\widetilde{\beta}(r,s) \big)\|m^\delta\|.
	\end{align*}
\end{theorem}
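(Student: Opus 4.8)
The plan is to interpolate between the true source $f_{\t}$ and the fully discrete noisy reconstruction $f_{\alpha,h}^{\delta,\tau}$ through the \emph{continuous} noisy regularised solution $f_{\a}^\delta$ (the solution of \eqref{weak.inverse} with $m$ replaced by $m^\delta$), by a single application of the triangle inequality
$$\|f_{\t} - f_{\alpha,h}^{\delta,\tau}\|_k \le \|f_{\t} - f_{\a}^\delta\|_k + \|f_{\a}^\delta - f_{\alpha,h}^{\delta,\tau}\|_k.$$
This isolates two error contributions that have already been quantified separately: the regularisation-plus-noise error of the continuous inverse problem, and the discretisation error committed when the continuous reconstruction is replaced by its finite element counterpart. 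No new variational computation is needed; the content of the theorem is the assembly of the two earlier estimates.

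First I would bound the first term directly by Theorem \ref{thm.f-falphadelta}, which (applied with the noisy data $m^\delta$) yields exactly
$$\|f_{\t} - f_{\a}^\delta\|_k \le \|f_{\t} - P_N f_{\t}\|_k + \frac{\alpha}{\lambda_N}\|f_{\t}\|_k + \frac{\delta}{2\sqrt{\alpha}},$$
accounting for the first three terms on the right-hand side of the claim. The remaining term comes from the second piece of the splitting: since the hypotheses of Corollary \ref{cor.thmnoisy} coincide with those of Theorem \ref{thm.totalerror} and are assumed here, that corollary gives
$$\|f_{\a}^\delta - f_{\alpha,h}^{\delta,\tau}\|_k \le C\big(\delta_6 + \delta_8(1+\delta_9) + (1+\delta_7)(\delta_9 + \widetilde{\beta}(r,s))\big)\|m^\delta\|,$$
with $C$ independent of $h$, $\tau$ and $m$. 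Adding the two displayed bounds reproduces the stated inequality term for term.

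The step needing the most care, although it is conceptual rather than computational, is the consistent identification of the intermediate object $f_{\a}^\delta$ across the two invoked results and the compatibility of the norms. Theorem \ref{thm.f-falphadelta} measures the error in the continuous $\|\cdot\|_k$ norm on $F$, whereas Corollary \ref{cor.thmnoisy} controls a difference between an element of $F$ and an element of $F^\tau$. For conforming FEMs $F^\tau \subset F$ and the two norms agree, so the concatenation is immediate; for nonconforming FEMs one must read $\|\cdot\|_k$ as the piecewise norm $\|\cdot\|_{k,\tau}$ on $F + F^\tau$ (which restricts to $\|\cdot\|_k$ on $F$), exactly the convention already used in Theorem \ref{thm.totalerror}. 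Once this identification is fixed and it is checked that the same $f_{\a}^\delta$ feeds both estimates, the proof closes with no further effort, the final constant being inherited from Corollary \ref{cor.thmnoisy}.
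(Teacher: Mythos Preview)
Your proposal is correct and follows exactly the approach indicated in the paper, which states that the result ``follows from the triangle inequality, Corollary \ref{cor.thmnoisy} and Theorem \ref{thm.f-falphadelta}.'' Your additional remark on the norm identification between $\|\cdot\|_k$ and $\|\cdot\|_{k,\tau}$ in the nonconforming case is a welcome clarification that the paper leaves implicit.
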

\begin{remark}
For sufficiently small $\delta_i,\, i=1,\cdots,9$, the combination of Remark \ref{rem.regularization} and Theorem \ref{thm.finalerror} imply
$$\|f_{\rm true} - f_\alpha^\delta\|_k \leq  C \|f_{\rm true} - P_Nf_{\rm true}\|_k.$$
\end{remark}
 
\subsection{Application to conforming and nonconforming FEMs}\label{sec.feminverse}
The application of the results stated in the previous section when the solution $f_\a $ is approximated by the conforming ($F^\tau=F^\C$) and nonconforming ($F^\tau=F^\NC$) FEMs are discussed in this section.
\subsubsection{Conforming FEMs}\label{sec.cfem.totalerror}
For conforming FEMs, we have 
$$F^\tau \subset F,\,b^\tau(\cdot,\cdot)=b(\cdot,\cdot)\mbox{ and }\|\cdot \|_{k,\tau}=\|\cdot\|_k.$$
 In this case, $E^\tau=\rm{Id}$  and hence Assumption \ref{assumption.companion} is trivially satisfied with $\delta_6=\delta_7=\delta_8=0$. Thus, Theorem \ref{thm.totalerror} result in
\begin{equation}\label{thm.totalerror.cfem}
\|f_\alpha-f_{\alpha,h}^\tau \|_{k}\le C(\alpha,N) \big( \tau^{\min\{q+1,\ell\}-k}+\widetilde{\beta}(r,s)\big)\|m\|.
\end{equation}	
Recall $\widetilde{\beta}(r,s)$ defined in \eqref{def.widetilde} is the parameter appearing in the approximation error in Theorem \ref{thm.forwarderror} of the forward problem that employs $V_h$ to discretise the displacement $u$. 
\begin{itemize}
	\item If $V_h=V_\C$, then \eqref{thm.totalerror.cfem},\eqref{def.widetilde} and \eqref{thm.forwarderror.cfem} yield 
	\begin{equation}\label{thm.totalerror.cfemvc}
	\|f_\alpha-f_{\alpha,h}^\tau \|_{k}\le C(\alpha,N) \big( \tau^{\min\{q+1,\ell\}-k}+h^{\min\{p+1,\;s\}+\min\{p+1,\;r\}-4}\big)\|m\|.
	\end{equation}		
\item If $V_h=V_\M$, then a combination of \eqref{thm.totalerror.cfem}, \eqref{def.widetilde} and \eqref{thm.forwarderror.ncfem}  leads to
	\begin{equation*}\label{thm.totalerror.cfemvm}
\|f_\alpha-f_{\alpha,h}^\tau \|_{k}\le C(\alpha,N) \big( \tau^{\min\{q+1,\ell\}-k}+h^{\min\{1,s-2\}+\min\{1,r-2\}}\big)\|m\|.
	\end{equation*}	
\end{itemize}
In particular, if $f \in L^2(\O)$ with $u,\xi \in H^{2+\gamma}(\O)$ and $q+1\ge \ell$, then the above result reduces to
$$	\|f_\alpha-f_{\alpha,h}^\tau \|\lesssim \big(\tau^{\ell-k}+h^{2\gamma}\big)\|m\|.$$

\subsubsection{Nonconforming FEMs}\label{sec.ncfem.totalerror}
Some examples of nonconforming FEMs for the cases $k=1$ and $k=2$ are provided in this section. Compared with conforming finite elements, the nonconforming finite elements employ fewer degrees of freedom and hence are attractive. 

\smallskip

$\bullet$ {\bf{$k=1:$}} For $F=H^1_0(\O)$ and $b(f,g)=\int_\O \nabla f\cdot\nabla g \dx$ for all $f, g \in F$, choose $F^\tau$ as the nonconforming $P_1$ finite element space ($q=1$). The discrete bilinear form can be defined as the sum of the elementwise integral of the gradient of the functions, that is, $$b^\tau(f^\tau,g^\tau)=\sit \nabla f^\tau\cdot \nabla g^\tau \dx \quad\forall\, f^\tau, g^\tau \in F^\tau.$$ 
It can be checked that $b^\tau(\cdot,\cdot)$ is continuous in $F+F^\tau$ and coercive in $F^\tau$ with respect to the norm $\|\cdot\|_{k,\tau}:=\|\cdot\|_{1,\tau}$.
There exists an operator $E^\tau$ with the required property given by $\delta_6\lesssim \tau$, $\delta_7\lesssim 1$ and $\delta_8=0$ \cite[Proposition 2.3]{CCDGMS}. Thus, Assumption \ref{assumption.companion} holds true. Also, Assumption \ref{assumption.inverseinterpolation} is satisfied \cite{ciarlet1978finite}. Hence Theorem \ref{thm.totalerror} implies
\begin{equation*}\label{thm.totalerror.ncfemk1}
	\|f_\alpha-f_{\alpha,h}^\tau \|_{1,\tau}\le C(\alpha,N) \big( \tau^{\min\{1,\ell-1\}}+\widetilde{\beta}(r,s)\big)\|m\|.
	\end{equation*}
	As in Section \ref{sec.cfem.totalerror}, for $V_h=V_\C$ (resp. $V_h=V_\M$), the above estimate can be rewritten as
	$$\|f_\alpha-f_{\alpha,h}^\tau \|_{1,\tau}\le C(\alpha,N) \big( \tau^{\min\{1,\ell-1\}}+h^{\min\{p+1,\;s\}+\min\{p+1,\;r\}-4}\big)\|m\|.$$
	$$ (\mbox{ resp. } \|f_\alpha-f_{\alpha,h}^\tau \|_{1,\tau}\le C(\alpha,N) \big( \tau^{\min\{1,\ell-1\}}+h^{\min\{1,s-2\}+\min\{1,r-2\}}\big)\|m\|).$$
	
 \smallskip

 $\bullet$ {\bf{$k=2:$}} For $F=V$ and $b(\cdot,\cdot)=a(\cdot,\cdot)$, choose $F^\tau=V_\M$ ($q=2$). Then
 $$b^\tau(\cdot,\cdot)=a_h(\cdot,\cdot), I^\tau=I_\M, E^\tau=E_\M, \delta_6\lesssim \tau^2, \delta_7\lesssim1 \mbox{ and } \delta_8=0,$$ 
we refer Section \ref{sec.ncfem.forward} for more details. As a result, Assumptions \ref{assumption.inverseinterpolation} and \ref{assumption.companion} hold and Theorem \ref{thm.totalerror} shows
\begin{equation}\label{thm.totalerror.ncfemk2}
\|f_\alpha-f_{\alpha,h}^\tau \|_{2,\tau}\le  C(\alpha,N) \big( \tau^{\min\{1,\ell-2\}}+\widetilde{\beta}(r,s)\big)\|m\|.
\end{equation}
Consequently, 
for $V_h=V_\C$ (resp. $V_h=V_\M$), the above result simplifies to
$$\|f_\alpha-f_{\alpha,h}^\tau \|_{2,\tau}\le C(\alpha,N) \big( \tau^{\min\{1,\ell-2\}}+h^{\min\{p+1,\;s\}+\min\{p+1,\;r\}-4}\big)\|m\|.$$
	\begin{equation}\label{thm.totalerror.ncfemvm}(\mbox{ resp. } \|f_\alpha-f_{\alpha,h}^\tau \|_{2,\tau}\le C(\alpha,N) \big( \tau^{\min\{1,\ell-2\}}+h^{\min\{1,s-2\}+\min\{1,r-2\}}\big)\|m\|).
	\end{equation}
\section{Numerical Section}\label{sec.numericalsection}
This section is devoted to the implementation procedure to solve the discrete inverse problem and is followed by numerical results for the BFS conforming FEM and Morley nonconforming FEM. 

\subsection{Implementation procedure}\label{sec.implementation}
The implementation procedure to solve the discrete inverse problem \eqref{weak.inverse.modified} is described in this section.

\smallskip
Let $u_h \in V_h$ solves \eqref{weak.forward.fem} and $f_{\alpha,h}^\tau \in F^\tau$ solves \eqref{weak.inverse.modified}. Let $\{\phi_1,\cdots,\phi_{{m}_{1}}\}$ be the global basis functions of $V_h$ and  $\{\psi_1,\cdots,\psi_{{m}_{2}}\}$ be the global basis functions of $F^\tau$. Then 
$T_hf=u_h:=\sum_{i=1}^{{m}_{1}}u_i\phi_i$ and $f_{\alpha,h}^\tau:=\sum_{k=1}^{{m}_{2}}f_k\psi_k$, where $u_i$ and $f_k$ for $i=1,\cdots,{m}_{1}$ and $k=1,\cdots,{m}_{2}$ are the unknowns. Substituting this in \eqref{weak.forward.fem} and choosing $v_h=\phi_j$, we obtain the matrix equation of the discrete forward problem as
$$\bA\bu=\bF$$ where 
	$$\bA:=\left[\int_{\Omega}\Delta \phi_i\Delta \phi_j \dx\right]_{1\le i,j\le m_1},\,\bF=\left[\int_{\Omega}f\phi_j \dx\right]_{1\le j\le m_1} \mbox{ and } \bu=[u_i]_{1\le i\le m_1}.$$
Since $f_{\alpha,h}^\tau:=\sum_{k=1}^{m_2}f_k\psi_k$ with $\bfa=[f_k]_{1\le k\le m_2}$, a choice of $g^\tau=\psi_k$ in the discrete forward problem \eqref{weak.inverse.modified} reduces it into a matrix equation given by
$$\bB\bfa=\bl$$
where
	$$\bB:=\big[(\Phi T_h\psi_k)^*(\Phi T_h\psi_l)+\alpha b(\psi_k,\psi_l)\big]_{1\le k,l\le m_2} \mbox{ and }\bl=\big[m^*(\Phi T_h\psi_l)\big]_{1\le l\le m_2}.$$
Recall the definition of $\Phi(\cdot)$ given in \eqref{measurement}. Let's consider the case where the measurement functionals $u \rightarrow (\phi_i,u ) $ are chosen as the averages of $u$ over the sets $\omega_i \subset \Omega,\, i=1,\cdots,N$. Define
$$\bW:=\left[\frac{\int_{\omega_i}T_h\psi_k}{\omega_i}\right]_{1 \le i\le N, \\1 \le k \le m_2} \mbox{ and } \bC:= \big[b(\psi_k,\psi_l)\big]_{1 \le k,l \le m_2}. $$
Then $\bB=\bW^*\bW+\bC.$ Since $T_h\psi_k \in V_h$ for $k=1,\cdots,m_2$, $T_h\psi_k:=\sum_{i=1}^{m_1}\beta_i^k\phi_i$ and the unknowns $\bbeta^k$ can be computed using the forward matrix equation
$$\bA\bbeta^k=\bS(:,k), \mbox{ where } \bS=\left[\int_\O\phi_i\psi_j\right]_{1 \le i \le m_1\\ 1 \le j \le m_2}. $$
For the \BFS finite elements $V_h$ with 16 degree of freedoms in a rectangle, if $\omega_i, i=1,\cdots,N$ is chosen as one of the \BFS rectangle, say $R$, then
$$\int_{\omega_i}T_h \psi_k=\int_{R}\sum_{j=1}^{16}\beta_j^k\phi_{j_{|R}}=\sum_{j=1}^{16}\beta_j^k\int_R\phi_{j_{|R}}.$$
Consequently, the matrix $\bB$ and the vector $\bl$ can be computed and then the matrix formulation $\bB \bfa=\bl$ can be used to evaluate $f_{\alpha,h}^\tau$.

\smallskip \noindent
 The algorithm where the measurement functionals are chosen as the averages of $u$ over the sets $\omega_i \subset \Omega,\, i=1,\cdots,N$ and $b(f,g)=(f,g) \fl f,g\in F$ is described now.

\medskip 
\noindent		\textsc{\underline{Algorithm:}}\noindent\\
\begin{algof}
		\SetKwFunction{fassemble}{Assemble}
\SetKwHangingKw{assemble}{\fassemble}

\DontPrintSemicolon
\For {$\cT_j, j=0,1,2,\cdots$}{
	\aset{the regularization parameter $\alpha$ and the locations $\omega_i, i \le i \le N$}
	
	\acompute{the local basis functions, its gradients and Hessians of $V_h$}
	
		\acompute{the local basis functions of $F^\tau$}
	
	\assemble{the global matrices $\bA$, $\bS$ and $\bC$ }
	\acompute{the vector $\bbeta^k$ for $1\le k \le m_2$ and hence the matrix $\bW$}
		
		\assemble{the matrices $\bB$ and $\bl$}
	\acompute{the vector $\bfa$}
	end
}
\end{algof}	
\subsection{Numerical results for \BFS elements}\label{sec.numericalresultsBFS}
Numerical results for conforming FEM for $k=0,1,2$ are presented in this section to justify theoretical estimates. The \BFS finite elements with 16 degrees of freedom in a rectangle are considered to solve the forward problem as it has lesser number of degree of freedoms in comparison with the Argyris finite elements with 21 degrees of freedom in a triangle (Figure \ref{fig.cfem}). The finite dimensional subspace $F^\tau$ of $F$ is also chosen as the \BFS finite elements, that is $p=q=3$. For simplicity, $h=\tau$. In the refinement process, each rectangle is divided into 4 equal sub-rectangles by connecting the midpoint of the edges. Figure \ref{fig.square} shows the initial triangulation of a square domain and its uniform refinement for \BFS finite element.

\begin{figure}[h]
	\begin{center}
		\begin{minipage}[H]{0.4\linewidth}
			{\includegraphics[width=5cm]{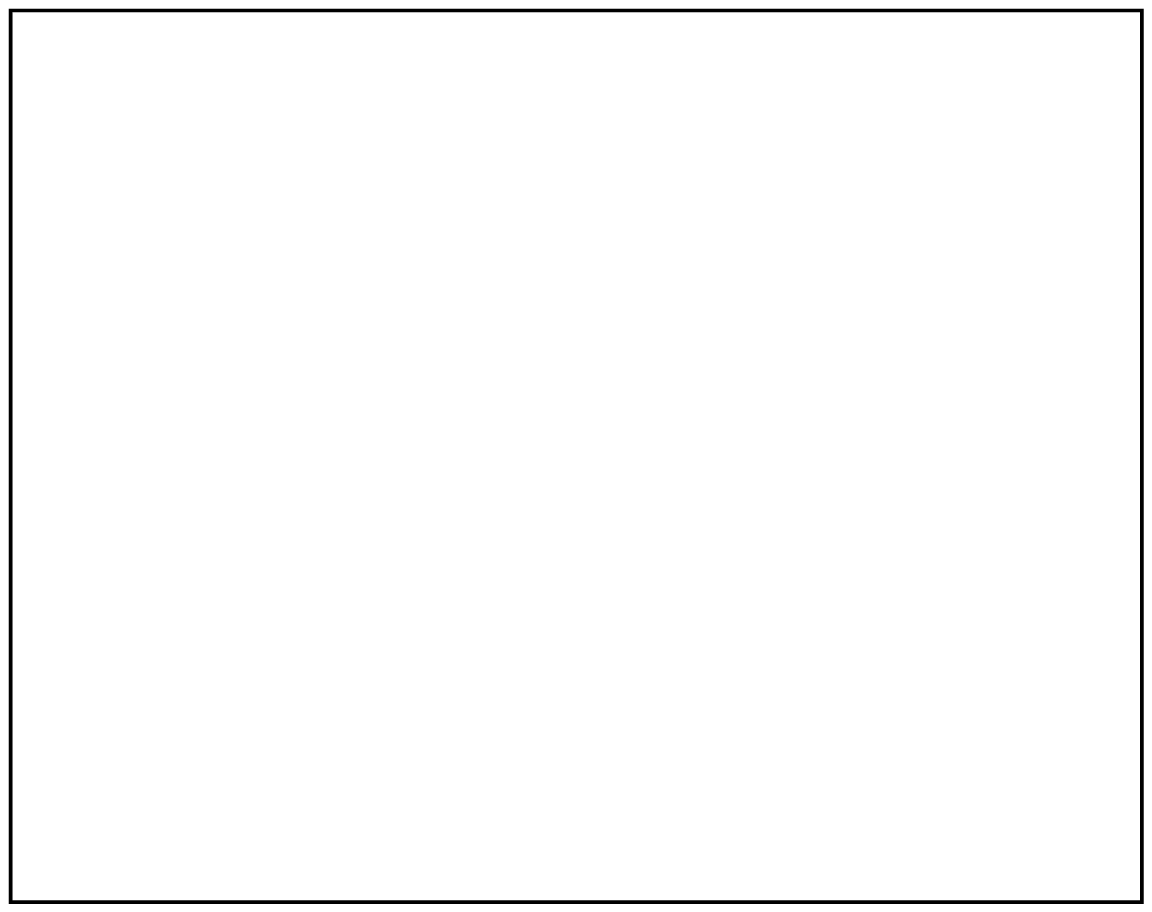}}
		\end{minipage}
		\begin{minipage}[H]{0.4\linewidth}
			{\includegraphics[width=5cm]{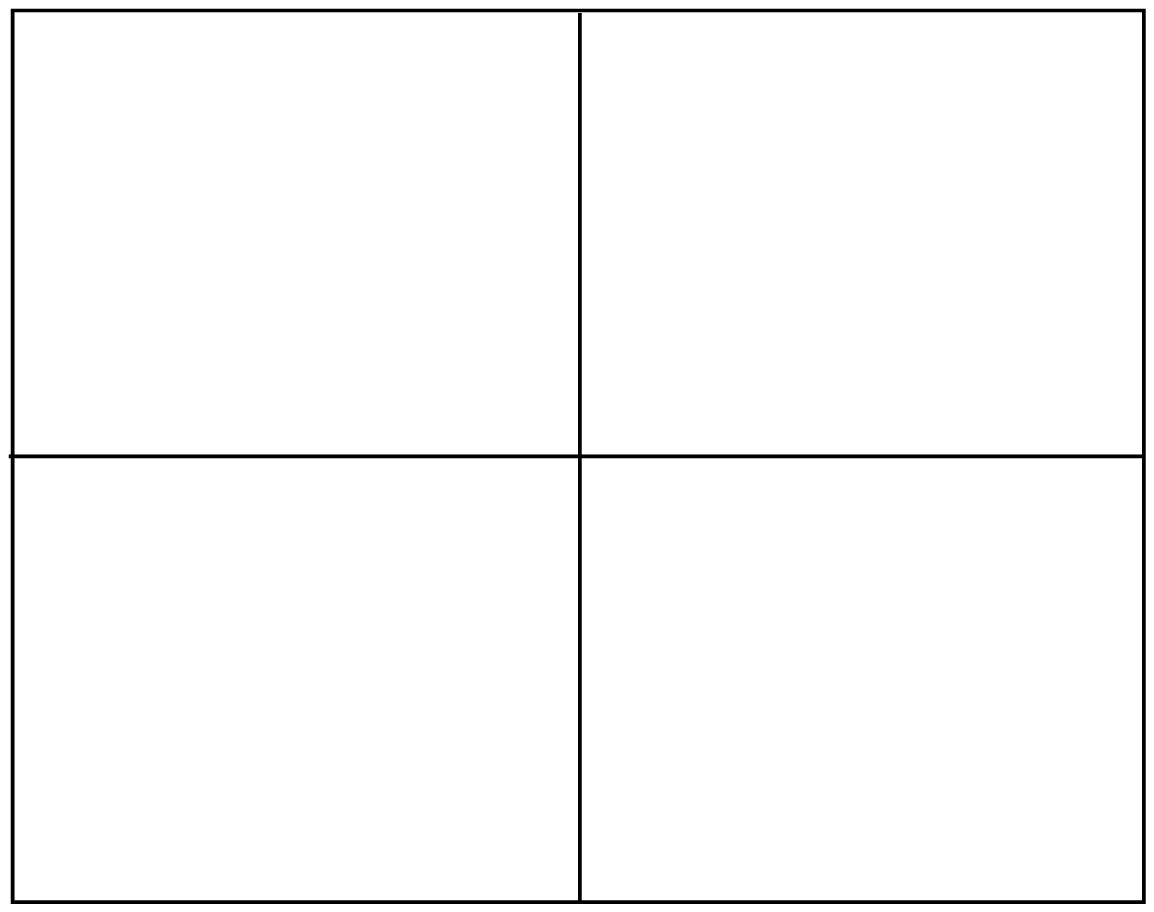}}
		\end{minipage}
		\caption{Initial triangulation and its red-refinement of square domain for \BFS finite element}\label{fig.square}
	\end{center}
\end{figure}

\smallskip 
Let $\ndof$ denotes the number of degrees of freedom of $V_h$ over $\O$. Let $m_{i}$ (resp. $f_{\alpha,i}$) be the discrete solution $m_h$ (resp. $f_{\alpha,h}^h$) of $m$ (resp. $f_\alpha$) at the $i$th level for $i=1,2,\cdots,L$. Define for $f \in F$, a closed subspace of $H^k(\O)$, 
$$ \err_{i}^k(m):=\|m-m_{i}\|=\|\Phi Tf-\Phi T_{i}f\|,$$
where $T_if=u_i$ is the discrete solution of $u \in V$ at the $i$th level. The corresponding order of convergence is computed using the formula 
$$ \order_{i}(m):=\log(\err_{i}^k(m)/\err_{i+1}^k(m))/\log(2).$$
Since $f_\alpha$ is unknown, the $H^k$ error $\| f_\alpha - f_{\alpha,i}\|_k$ is approximated by $\|f_{\alpha,i} -f_{\alpha,L}\|_k$. Define
$$\err_i^k(f_\alpha):=\|f_{\alpha,i} -f_{\alpha,L}\|_k.$$
Note that $\err_L^k(f_\alpha)=0$. The order for $f_\alpha$ is approximated by $$\order_i^k(f_\a):=\log(\err_i^k(f_\a)/\err_{L-1}^k(f_\a))/\log(2^{L-1-i}).$$
If the displacement $u$ is unknown, then in the implementation, $m=\Phi u$ is replaced by $m_i=\Phi u_i$ and $\err_i(m):=\|m_i -m_{L}\|.$

\smallskip
	
The results are presented for three particular cases $k=0,1,2$ and for two different domains (Section \ref{sec.squaredomain} and Section \ref{sec.lshapeddomain}) where the functions $\phi_j,\, j=1,\cdots,N$ are defined as
\[\phi_j(x)=\begin{cases}
\frac{1}{|\omega_j|} \quad &\mbox{ if } x \in \omega_j,\,\omega_j \subset \O\\
0 \quad &\mbox{ otherwise.}
\end{cases}\]
This and \eqref{measurement} imply that the $j$th element of the measurement $m$ is the average of $u$ over the set $\omega_j$.

\smallskip

From the characterization of Sobolev spaces using Fourier transform \cite{Kesavan}, an alternative definition of $H^s(\R^2)$ for any given $s \in [0,\infty)$ is given by
$$H^s(\R^2):=\{v \in L^2(\R^2);(1+|\xi|^2)^{s/2}\hat{v} \in L^2(\R^2)\},$$
where $\hat{v}$ is the Fourier transform of $v$. In a similar way, $H^{-s}(\R^2)$ can be defined. This, the definition of $\phi_j(\cdot)$ and the fact that $\int \frac{1}{(1+|\xi|^2)^t }d\xi$ is finite only when $2t>1$ prove that $\phi_j \in H^{1/2-\beta}(\O), \beta>0$.
\begin{figure}[h!!]
	\begin{minipage}[H]{0.4\linewidth}
		{\includegraphics[width=5cm]{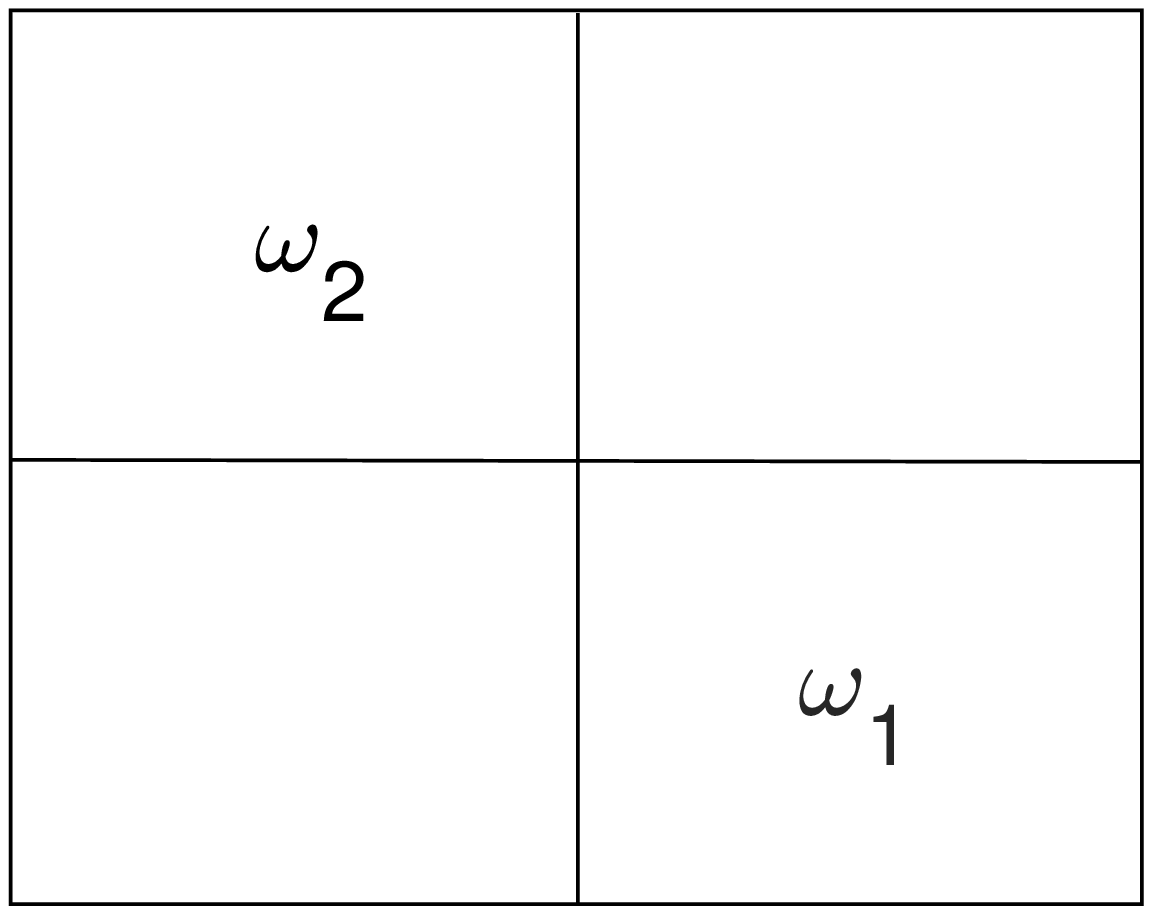}}
	\end{minipage}
	\begin{minipage}[H]{0.4\linewidth}
		{\includegraphics[width=5cm]{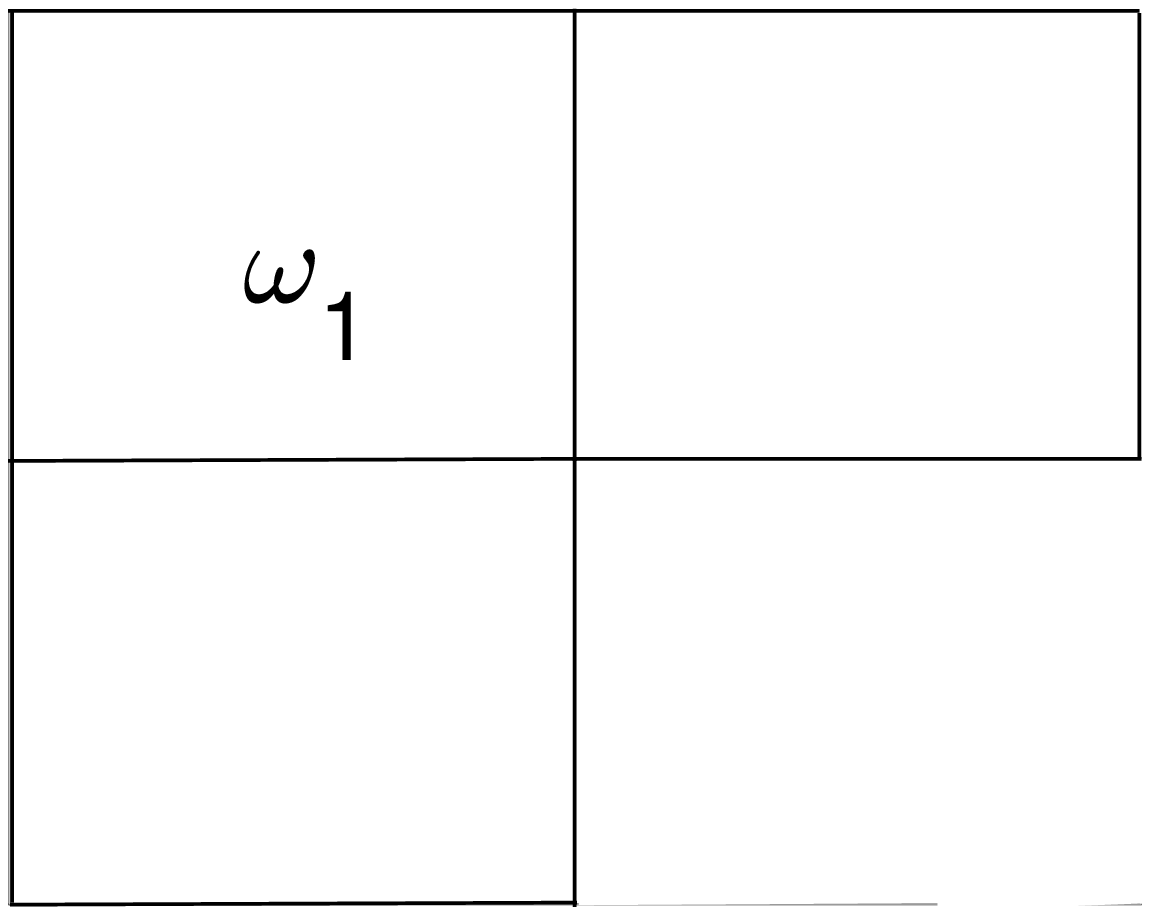}}
	\end{minipage}
	\caption{Location of $\omega_1$ and $\omega_2$ for square domain (left) and location of $\omega_1$ for L-shaped domain (right), \BFS FEM}\label{fig.square.w12.boundary}
\end{figure}
\subsubsection{\bf{Square domain}}\label{sec.squaredomain}
Let the computational domain be $\Omega=(0,1)^2$. The sets $\omega_j,\,j=1,2$ are illustrated in Figure \ref{fig.square.w12.boundary}, left. Since $\phi_j \in H^{1/2-\beta}(\O)$ and $\O$ is convex, the weak formulation of the auxiliary problem \eqref{aux} and Lemma \ref{bih_reg_res} lead to $\xi_j \in H^4(\O)$, that is, $r=4$ in Assumption \ref{assumption}.

\medskip

Consider the case $F=L^2(\O)$ which yields $k=0$. Lemma \ref{bih_reg_res} shows that $u \in H^4(\O)$. Hence, from Assumption \ref{assumption.regularity.forward}, $s=4$.
 The bilinear form $b(f,g)$ is taken as the $L^2$ inner product given by $b(f,g)=(f,g)$ for all $f,g \in L^2(\O)$. It can easily shown that $b(\cdot,\cdot)$ is a coercive continuous linear form on $F$. The combination of this, \eqref{basis_eta}, \eqref{def.falpha.basis} and $\xi_j \in H^4(\O)$ read $f_\a \in  H^4(\O)$, that is, $\ell=4$. 

\smallskip

The conditions $q\ge\ell-1$ and $p \ge \max\{r,s\}-1$ are satisfied. As a consequence of Theorems \ref{thm.forwarderror} and \ref{thm.totalerror} for conforming FEMs (see \eqref{thm.forwarderror.cfem} and \eqref{thm.totalerror.cfem} for details),
\begin{equation}\label{bfs.squarek0}
\|m-m_h\|= \mathcal{O}(h^{4}) \mbox{ and }\|f_\alpha-f_{\alpha,h}^h \| = \mathcal{O}(h^{4}).
\end{equation}
Two examples are considered in this case: the displacement $u$ is known (Example 1) and $u$ is unknown (Example 2). The number of unknowns/degrees of freedom $\ndof$ at the last level ($h=0.0221,\, L=6$) is 15876.

\smallskip

$\bullet$ {\bf{ Example 1: }}The model problem is constructed in such a way that the displacement $u$ is known. The data is chosen as $u=x^2 y^2 (1-x)^2 (1-y)^2 \in V$. The source term $f \in L^2(\O)$ is then computed using $f=\Delta^2u$.

\medskip

 The numerical results are presented in Table \ref{table.squarem} for $\alpha=10^{-3}$ and $\alpha=10^{-7}.$ It can be seen that the rate of convergence is quartic for $m$ in $L^2$ norm. Also, the order of convergence for the reconstructed regularised approximation of the source field $f_\alpha$ in $L^2$ norm is close to $\mathcal{O}(h^4)$. The theoretical rates of convergence given in \eqref{bfs.squarek0} are confirmed by these numerical outputs. We observe in this example that as $\alpha$ tends to $0$, the order of $f_\alpha$ is getting close to $4$ faster.

\begin{table}[h!!] 
	\caption{\small{Convergence results for Example 1, Square domain, \BFS, $k=0$, $\alpha=10^{-3}$ and $10^{-7}$}}
	{\small{\scriptsize
			\begin{center}
				\begin{tabular}{ ||c|||c|c||c|c||c|c||c|c||}
					\hline
					$i$&$h$ &\ndof&$\err_i^0(m)$&\order &$\err_{i}^0(f_{10^{-3}})$ & \order &$\err_{i}^0(f_{10^{-7}})$ &\order \\	
					\hline
1&0.7071&4&  0.000143&  4.0171& 0.000062& 4.1398&0.362101&4.0393\\
2&0.3536& 36& 8.86e-6& 4.0034& 3.09e-6& 4.0781&0.021149&4.0198\\
3&0.1768&196& 5.52e-7&4.0003&1.10e-7& 3.7088& 0.001296& 4.0158\\
4&0.0884&900& 3.45e-8&4.0000&9.20e-9& 3.8409&0.000088& 4.0634\\
5&0.0442&3844&2.16e-9&4.0010&6.4e-10&    -& 0.000005&- \\
					\hline	
				\end{tabular}
			\end{center}
	}}\label{table.squarem}
\end{table}

$\bullet$ {\bf{ Example 2: }}In this example, the source term $f$ is chosen as $\exp(x+y)$ with unknown displacement $u$ and $\alpha=10^{-5}$. Table \ref{table.squaremh} shows the errors and orders of convergence for $m$ and $f_\alpha$. The numerical results are similar to those obtained for Example 1 in Table \ref{table.squarem}.

\begin{table}[h!!] 
	\caption{\small{Convergence results for Example 2, Square domain, \BFS, $u$ unknown, $k=0$, $\alpha=10^{-5}$}}
	{\small{\scriptsize
			\begin{center}
				\begin{tabular}{ ||c||c||c||c|c||c|c||}
					\hline
				$i$&	$h$ &\ndof &$\err_i^0(m)$&\order  &$\err_{i}^0(f_{10^{-5}})$ & \order \\	
					\hline
1&0.7071&4&0.000231&3.8309& 0.010066& 3.6198\\
2&0.3536& 36&  0.000021&3.9557&0.001435&3.8897\\
3&0.1768&196&0.000001& 3.9999& 0.000106& 3.9582\\
4&0.0884&900&9.36e-8&4.0544& 0.000007&4.0296\\
5&0.0442&3844&5.64e-9&-& 4.41e-7& -\\						
					\hline	
				\end{tabular}
			\end{center}		
	}}\label{table.squaremh}
\end{table}

\subsubsection{\bf{L-shaped domain}}\label{sec.lshapeddomain}
Consider the non-convex L-shaped domain given by $\Omega=(-1,1)^2 \setminus\big{(}[0,1)\times(-1,0]\big{)}$. This example is particularly interesting since the solution is less regular due to the corner singularity.

The location $\omega_1\, (N=1)$ is depicted in Figure \ref{fig.square.w12.boundary}, right. Since $\O$ is non-convex, Lemma \ref{bih_reg_res} shows $u, \xi  \in H^{2+\gamma}(\O),\,\gamma\in(\half,1)$. The number of unknowns \ndof at the last level ($h=0.0221,\,L=6$) is  48132.


\medskip

$\bullet$ {\bf{Case 1: $L^2$ regularization $(k=0)$.}} The space $F$ and the bilinear form $b(\cdot,\cdot)$ on $F$ are chosen as in Section \ref{sec.squaredomain}.  The source term $f$ is chosen such that the forward problem has the exact singular solution {{\cite[Section 3.4.1]{Grisvard92}}} given by
$
u=({R}^2 \cos^2\theta-1)^2 ({R}^2 \sin^2\theta-1)^2 {R}^{1+ \gamma}g_{\gamma,\omega}(\theta)
$
where $ \gamma\approx 0.5444837367$ is a non-characteristic 
root of $\sin^2( \gamma\omega) =  \gamma^2\sin^2(\omega)$, $\omega=\frac{3\pi}{2}$, and
$g_{\gamma,\omega}(\theta)=(\frac{1}{\gamma-1}\sin ((\gamma-1)\omega)-\frac{1}{ \gamma+1}\sin(( \gamma+1)\omega))(\cos(( \gamma-1)\theta)-\cos(( \gamma+1)\theta))$ 
$-(\frac{1}{\gamma-1}\sin(( \gamma-1)\theta)-\frac{1}{ \gamma+1}\sin(( \gamma+1)\theta))
(\cos(( \gamma-1)\omega)-\cos(( \gamma+1)\omega))$ and $\left(R,\theta\right)$ are the polar coordinates.

\smallskip

The definition of $b(\cdot,\cdot)$, \eqref{basis_eta}, \eqref{def.falpha.basis} along with $\xi  \in H^{2+\gamma}(\O)$ imply $f_\a \in H^{2+\gamma}(\O)$. The conditions $q\ge \ell-1$ and  $p \ge \max\{r,s\}-1$ are trivially satisfied since $s=r=\ell =2+\gamma$. Hence, from Theorems \ref{thm.forwarderror} and \ref{thm.totalerror} together with \eqref{thm.forwarderror.cfem} and \eqref{thm.totalerror.cfemvc},
$$\|m-m_h\| = \mathcal{O}(h^{2\gamma}) \mbox{ and }\|f_\alpha-f_{\alpha,h}^h \| = \mathcal{O}(h^{2\gamma}).$$

\smallskip

The results of this numerical experiment ($\err_i^0(m)$ and $\err_{i}^0(f_{10^{-5}})$) are displayed in Table \ref{table.Lshapedmh}. As seen in the table, we obtain $\mathcal{O}(h^{2\gamma})$ for $m$ in $L^2$ norm. This numerical order of convergence clearly match the expected order of convergence, given the regularity property of the solution. However, superconvergence is obtained for $f_\a$ in $L^2$ norm which indicates that the numerical performance is carried out in the non-asymptotic region for this domain that has a corner singularity combined with the fact that the exact solution $f_\a$ is unknown. Similar observations can be found in literature, for example \cite[Section 6.3]{carstensen2020morley}.

\medskip

$\bullet$ {\bf{Case 2: $H^1$ regularization $(k=1)$.}} In this example, $F=H^1_0(\O)$. The bilinear form $b(f,g)$ is defined by  $b(f,g)=\int_\O \nabla f\nabla g \dx$ for all $f,g \in F$. Then, $b(\cdot,\cdot)$ is a coercive continuous linear form on $F$. Here, we report the results of numerical test for the source field $f \in H^1_0(\O)$ given by
\begin{align*}
&f(R,\theta)=\left( R^2\cos^2\theta-1\right) \left(R^2\sin^2\theta-1\right)R^{2/3}\left(1-\cos\theta\right) \left(1+\sin\theta\right).
\end{align*}
From \eqref{basis_eta} and \cite[Remark 2.4.6]{Grisvard92}, we have $\eta_1 \in H^{1+\rho}(\O),\,0< \rho<2/3$ and hence \eqref{def.falpha.basis} implies $f_\a \in  H^{1+\rho}(\O)$, that is, $\ell=1+\rho$. Also, $q\ge\ell-1$  and $p \ge \max\{r,s\}-1$. As a result, Theorems \ref{thm.forwarderror} and \ref{thm.totalerror} (\eqref{thm.forwarderror.cfem} and \eqref{thm.totalerror.cfemvc}) lead to
$$\|m-m_h\|= \mathcal{O}(h^{2\gamma}) \mbox{ and }\|f_\alpha-f_{\alpha,h}^h \|_1 =\mathcal{O}(h^{\rho}).$$ 
However, the numerical results tabulated in Table \ref{table.Lshapedmh} ( $\err_h^1(m)$ and $\err_{h}^1(f_{10^{-5}})$) show super convergence for $m$ and $f_\alpha$. This can be justified in a similar way as in Case 1.
\begin{table}[h!!] 
	\caption{\small{Convergence results, L-shaped domain, \BFS, $\alpha=10^{-5},$ $k=0,1,2$}}
	{\small{\scriptsize
			\begin{center}
				\begin{tabular}{ ||c||c||c||c|c||c|c||c||c||}
					\hline
				$i$&	$h$ &\ndof &$\err_i^0(m)$&\order  &$\err_{i}^0(f_\alpha)$ & \order&$\err_i^1(m)$  \\	
					\hline
				1&	0.7071&20&   0.042987& 1.9235& 1.538154& 1.0308&0.000217\\
				2&0.3536&   132& 0.011332& 1.2661& 1.396468& 1.3280&0.000068\\
				3&0.1768&644&0.004712& 1.1194&0.666975& 1.4589&0.000028\\
				4&0.0884& 2820&  0.002169& 1.0946&0.275330&1.6414&0.000011\\
				5&0.0442& 11780&  0.001016&1.0901&0.088254&-&0.000004\\   		
					\hline	
				\end{tabular}
			\end{center}
			\begin{center}
				\begin{tabular}{  ||c||c||c|c||c|c||c|c||c||}
					\hline
					$i$&\order &$\err_{i}^1(f_\alpha)$ & \order &$\err_i^2(m)$&\order  &$\err_{i}^2(f_\alpha)$ & \order  \\	
					\hline
		1&1.4772& 0.492943&2.0819&0.000113&1.4611&0.002226&0.9691\\
		2&1.4091&0.120080&2.0967&0.000036&1.4035& 0.001039& 0.9256\\
		3&1.4747&0.028848& 2.1163&0.000015&1.4734&0.000563& 0.9464\\
		4&1.6446&0.006839& 2.1560&0.000006& 1.6443&0.000311&1.0390\\
		5&-& 0.001534&-&0.000002&-&0.000152&-  \\
					\hline	
				\end{tabular}
			\end{center} 			
	}}\label{table.Lshapedmh}
\end{table}
\medskip

$\bullet$ {\bf{Case 3: $H^2$ regularization $(k=2)$.}} Choose $F=V$ and $b(f,g)=\int_\O \Delta f\Delta g \dx=a(f,g)$ for all $f,g \in V$. The source field $f \in V$ is chosen as the displacement $u$ given in Case 1. Arguments analogous as in Case 1 leads to $\ell=2+\gamma.$ Consequently, Theorems \ref{thm.forwarderror} and \ref{thm.totalerror} read
$$\|m-m_h\|=\mathcal{O}(h^{2\gamma}) \mbox{ and }\|f_\alpha-f_{\alpha,h}^h \|_2 =\mathcal{O}(h^{\gamma}),$$ 
which are discussed in \eqref{thm.forwarderror.cfem} and \eqref{thm.totalerror.cfemvc}. The errors and orders of convergence are presented in Table \ref{table.Lshapedmh} ($\err_h^2(m)$ and $\err_{h}^2(f_{10^{-5}})$). The same comments as in Case 2 can be made about this example. Observe that the numerical order of convergence for $m$ is close to $1.6$ for both $k=1$ and $2$ with unknown displacement $u$.

\subsection{Numerical results for Morley elements}\label{sec.numericalresultsMorley}
This section deals with the results of numerical experiments for the Morley nonconforming FEM for $k=0$ and $k=2$. Figure \ref{fig.squareMorley} shows the initial triangulation
of a square domain and its uniform refinement for the Morley FEM. The measurement $m$ is defined as in Section \ref{sec.numericalresultsBFS} with the locations $\omega_j,\,j =1,2$ depicted in Figure \ref{fig.square.Lshaped.location_Morley} for square and L-shaped domains. The  space $F$ and the bilinear form $b(\cdot,\cdot)$ are chosen the same as that in Section \ref{sec.numericalresultsBFS}. The finite dimensional spaces are $V_h=F^\tau=V_\M$ with $h=\tau$. This implies $p=q=2$.
\begin{figure}[h]
	\begin{center}
		\begin{minipage}[H]{0.4\linewidth}
			{\includegraphics[width=5cm]{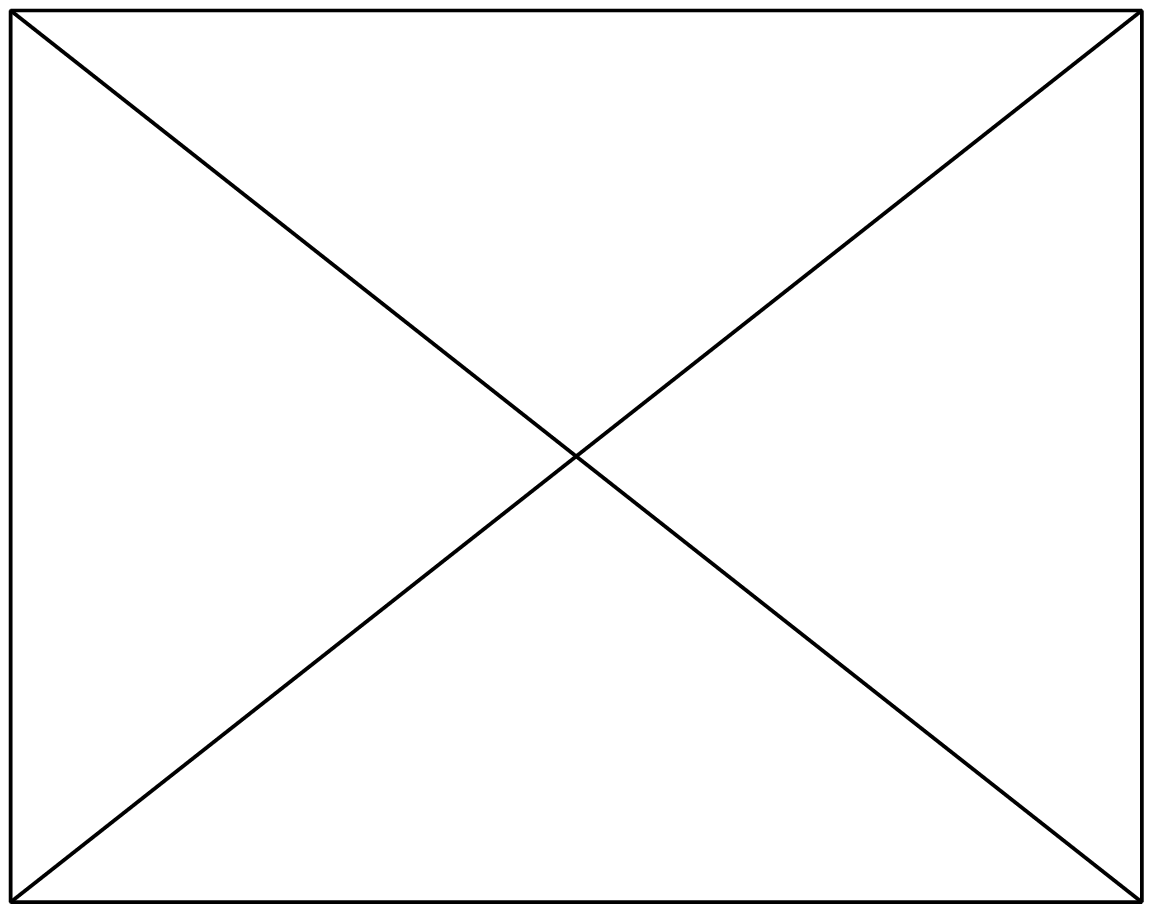}}
		\end{minipage}
		\begin{minipage}[H]{0.4\linewidth}
			{\includegraphics[width=5cm]{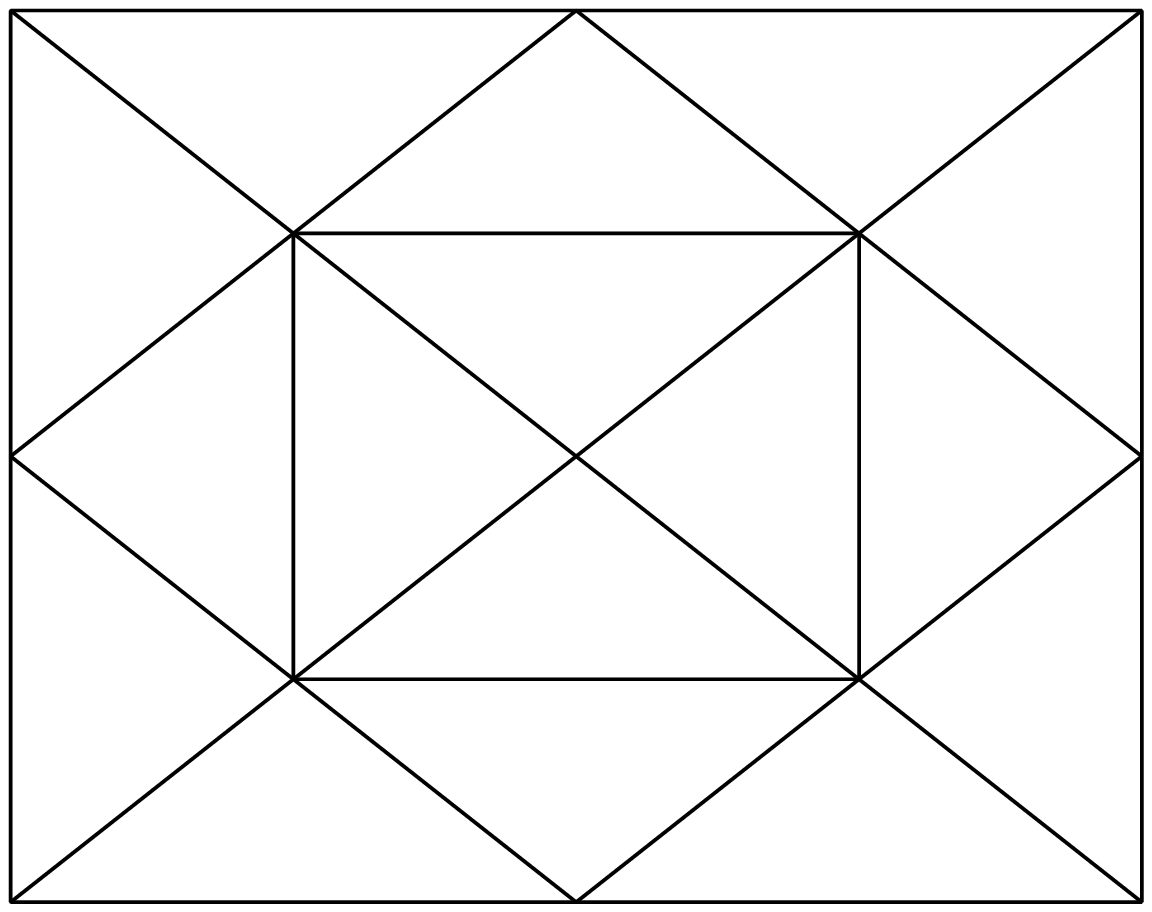}}
		\end{minipage}
		\caption{Initial triangulation and its red-refinement of square domain for Morley FEM}\label{fig.squareMorley}
	\end{center}
\end{figure}
\begin{figure}[h!!]
	\begin{center}
	\begin{minipage}[H]{0.4\linewidth}
		{\includegraphics[width=5cm]{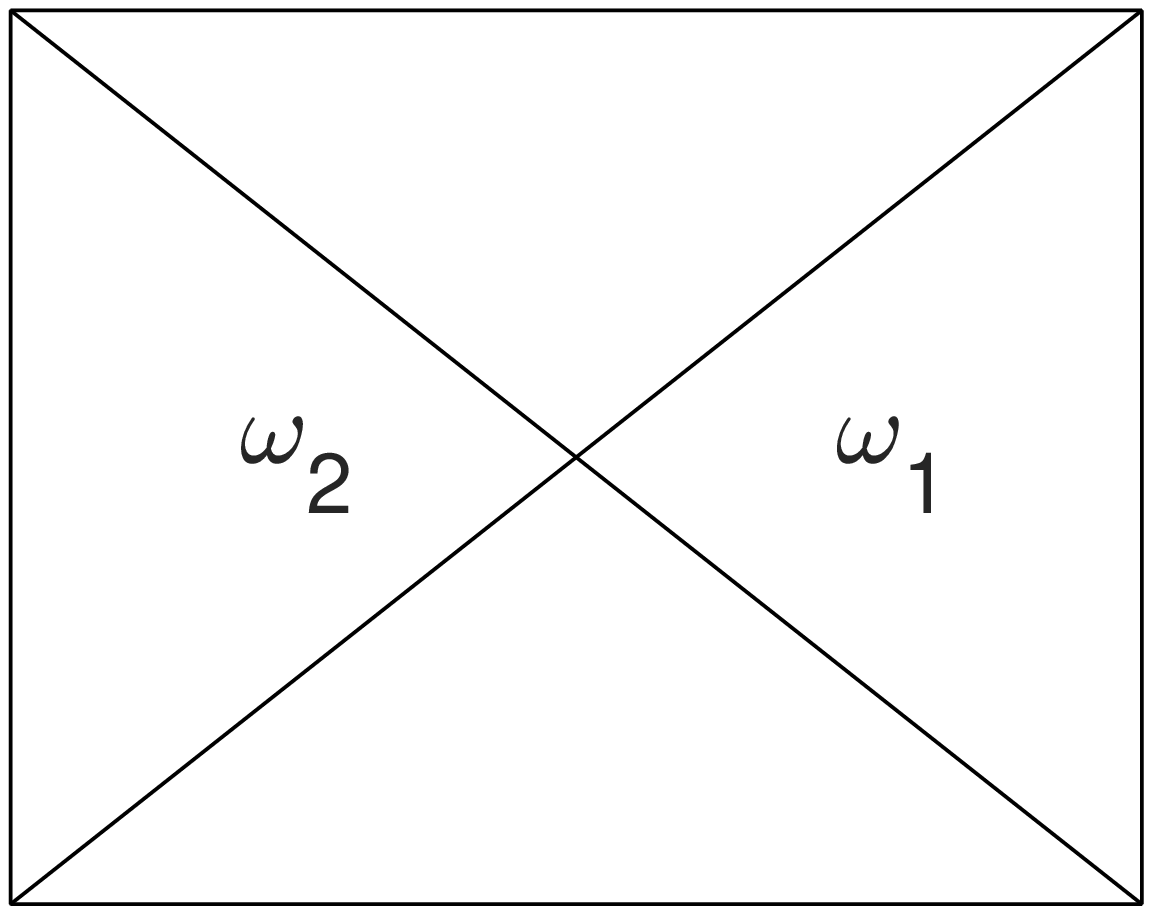}}
	\end{minipage}
	\begin{minipage}[H]{0.4\linewidth}
		{\includegraphics[width=5cm]{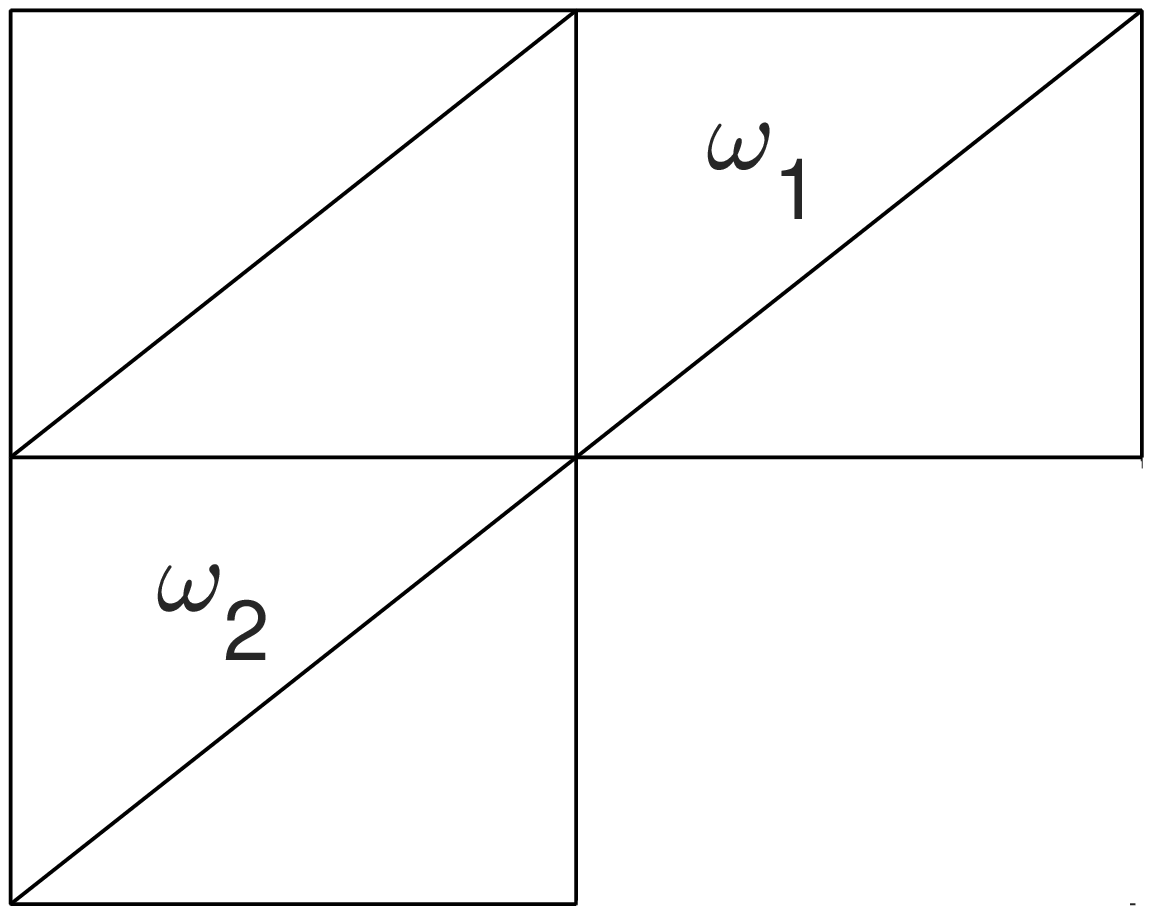}}
	\end{minipage}
	\caption{Location of $\omega_1$ and $\omega_2$ for square domain (left) and L-shaped domain (right), Morley FEM}\label{fig.square.Lshaped.location_Morley}
\end{center}
\end{figure}
\subsubsection{Square domain}
Consider Example 2 in Section \ref{sec.squaredomain}. The criss-cross mesh with $h=1$ is taken as the initial triangulation of $\O$. The errors and orders of convergence for $m$ and $f_\alpha$ are displayed in Table \ref{table.squaremhMorley}. The number of degrees of freedom \ndof at the last level ($h=0.0156,L=7$) is 32513.

\smallskip

We have $k=0$, $p=q=2$ and $r=s=\ell=4$. Since $F^\tau=V_\M \subset F$, Theorems \ref{thm.forwarderror} and \ref{thm.totalerror} (\eqref{thm.forwarderror.ncfem} and \eqref{thm.totalerror.cfemvc}) result in
$$\|m-m_h\| = \mathcal{O}(h^{2}) \mbox{ and }\|f_\alpha-f_{\alpha,h}^h \| = \mathcal{O}(h^{2}).$$
These orders are confirmed by the numerical results tabulated in Table \ref{table.squaremhMorley}.
\begin{table}[h!!] 
	\caption{\small{Convergence results, Square domain, Morley, $k=0$, $\alpha=10^{-5}$}}
	{\small{\scriptsize
			\begin{center}
				\begin{tabular}{ ||c||c||c||c|c||c|c||}
					\hline
					$i$&	$h$ &\ndof &$\err_i^0(m)$&\order  &$\err_{i}^0(f_{\alpha})$ & \order \\	
					\hline
					1&1.0000&5&  0.030649& 2.1958&2.2889&1.9432\\
					2&0.5000&25& 0.004727&2.0705&0.874200&2.0819\\
					3&0.2500&113& 0.001219&2.1091&0.196650&2.0584\\
					4&0.1250&481& 0.000313&2.1836&0.044512&2.0159\\
					5&0.0625&1985& 0.000076&2.3164& 0.010746&1.9815\\						
					6&0.0313&8065& 0.000015&-& 0.002721&-\\
					\hline	
				\end{tabular}
			\end{center}		
	}}\label{table.squaremhMorley}
\end{table}
\subsubsection{L-shaped domain}
In this section, we take the examples in Section \ref{sec.lshapeddomain} for the cases $k=0,2$. Since $\O$ is non-convex, $s=r=2+\gamma$. The number of unknowns \ndof at the last level ($h=0.0221, L=7$) is 48641.
 \medskip
 
$\bullet$ {\bf{$L^2$ regularization $(k=0)$.}} As in Section \ref{sec.lshapeddomain}, $\ell=2+\gamma$. Since $F^\tau \subset F$, Theorems \ref{thm.forwarderror} and \ref{thm.totalerror} together with \eqref{thm.forwarderror.ncfem} and \eqref{thm.totalerror.cfemvc} read
$$\|m-m_h\| = \mathcal{O}(h^{2\gamma}) \mbox{ and }\|f_\alpha-f_{\alpha,h}^h \| = \mathcal{O}(h^{2\gamma}).$$

%

\smallskip

$\bullet$ {\bf{$H^2$ regularization $(k=2)$.}} Here, $\ell=2+\gamma$. Consequently, Theorems \ref{thm.forwarderror} and \ref{thm.totalerror} with \eqref{thm.forwarderror.ncfem} and \eqref{thm.totalerror.ncfemvm} show that
$$\|m-m_h\|=\mathcal{O}(h^{2\gamma}) \mbox{ and }\|f_\alpha-f_{\alpha,h}^h \|_2 =\mathcal{O}(h^{\gamma}).$$ 

The errors and orders of convergence for $m$ and $f_\a$ are presented in Table \ref{table.LshapedmhMorley} for $k=0,2.$ The numerical orders of convergence are better than the orders of convergences from the theoretical analysis. A similar observation was made in Section \ref{sec.lshapeddomain} for \BFS FEM.
\begin{table}[h!!] 
	\caption{\small{Convergence results, L-shaped domain, Morley, $\alpha=10^{-5}$, $k=0,2$}}
	{\small{\scriptsize
			\begin{center}
				\begin{tabular}{ ||c||c||c||c|c||c|c||c|c||c|c||}
					\hline
					$i$&	$h$ &\ndof &$\err_i^0(m)$&\order  &$\err_{i}^0(f_\alpha)$ & \order&$\err_i^2(m)$ &\order  &$\err_{i}^2(f_\alpha)$ & \order \\	
					\hline
					1&1.4142&5&0.500572&-0.4075&26.104351&1.0631&0.002711&1.6266&0.195481&1.8019\\
					2&	0.7071&33& 0.663949&1.2960&12.021589&1.0493&0.001534&1.8277&0.084437&1.9496\\
					3&0.3536&161& 0.270395&1.7765&12.100613&1.4022& 0.000544&1.9386& 0.015728&1.7913\\
					4&0.1768&705&0.078925&1.8469&5.374215&1.5178&0.000155&2.0022&0.003563&1.6158\\
					5&0.0884&2945&0.021940&1.7881&1.824836&1.4772&0.000042&2.1136&0.00117&1.5579\\
					6&0.0442&12033&0.006353&1.6706&0.655437&- &0.000010&- &0.000379&- \\   		
					\hline	
				\end{tabular}
			\end{center}
	}}\label{table.LshapedmhMorley}
\end{table}
\medskip

The discrete reconstructed regularised approximation of the source field $f_{10^{-5},h}^h$ using \BFS and Morley FEMs for $k=0$ (resp. $k=2$) performed in square (resp. L-shaped) domain with unknown $u$ are depicted in Figure \ref{fig.squareLshapedmh}.
\begin{figure}[h]
	\begin{center}
		\begin{minipage}[H]{0.4\linewidth}
			{\includegraphics[width=5cm]{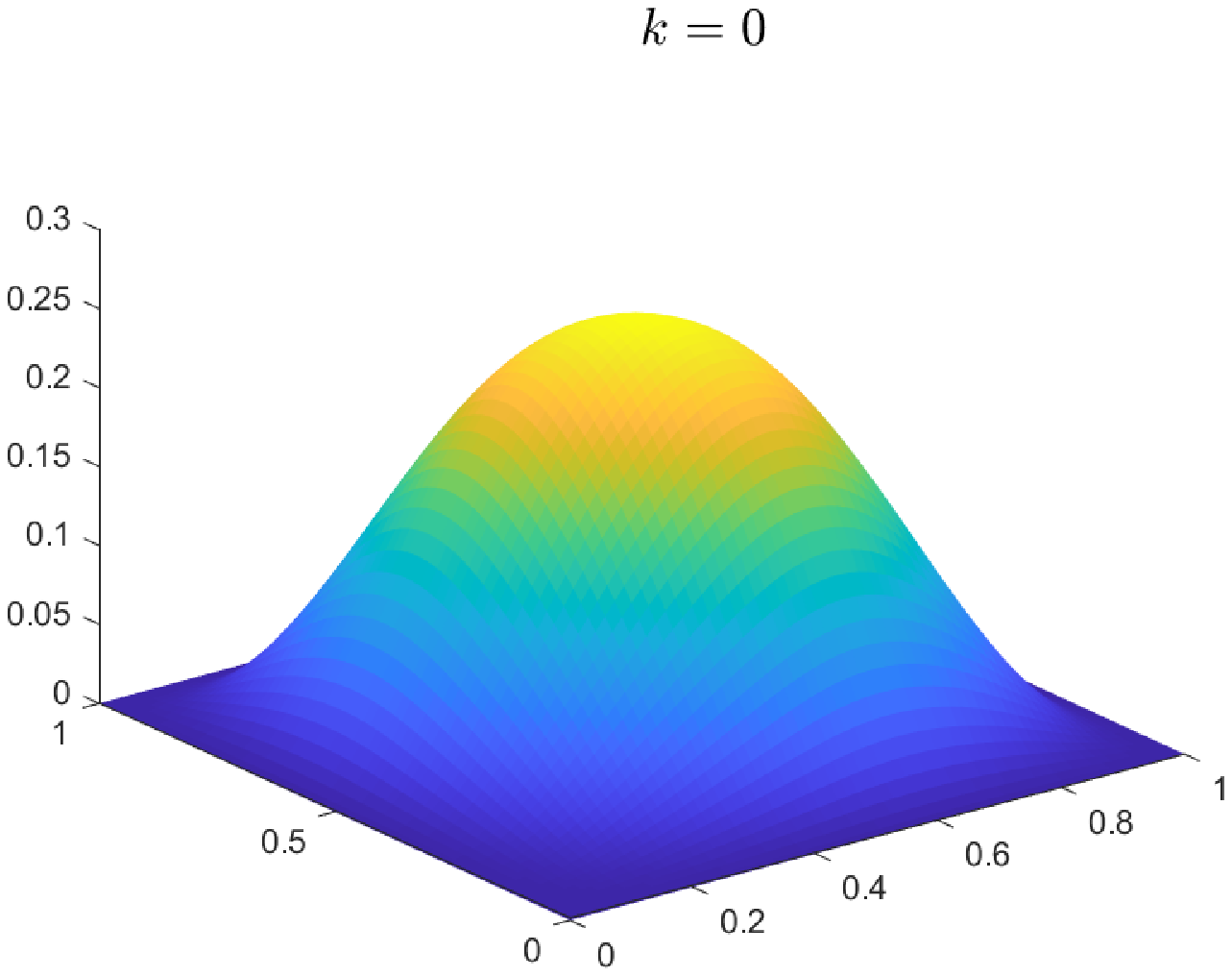}}
		\end{minipage}
		\begin{minipage}[H]{0.4\linewidth}
			{\includegraphics[width=5cm]{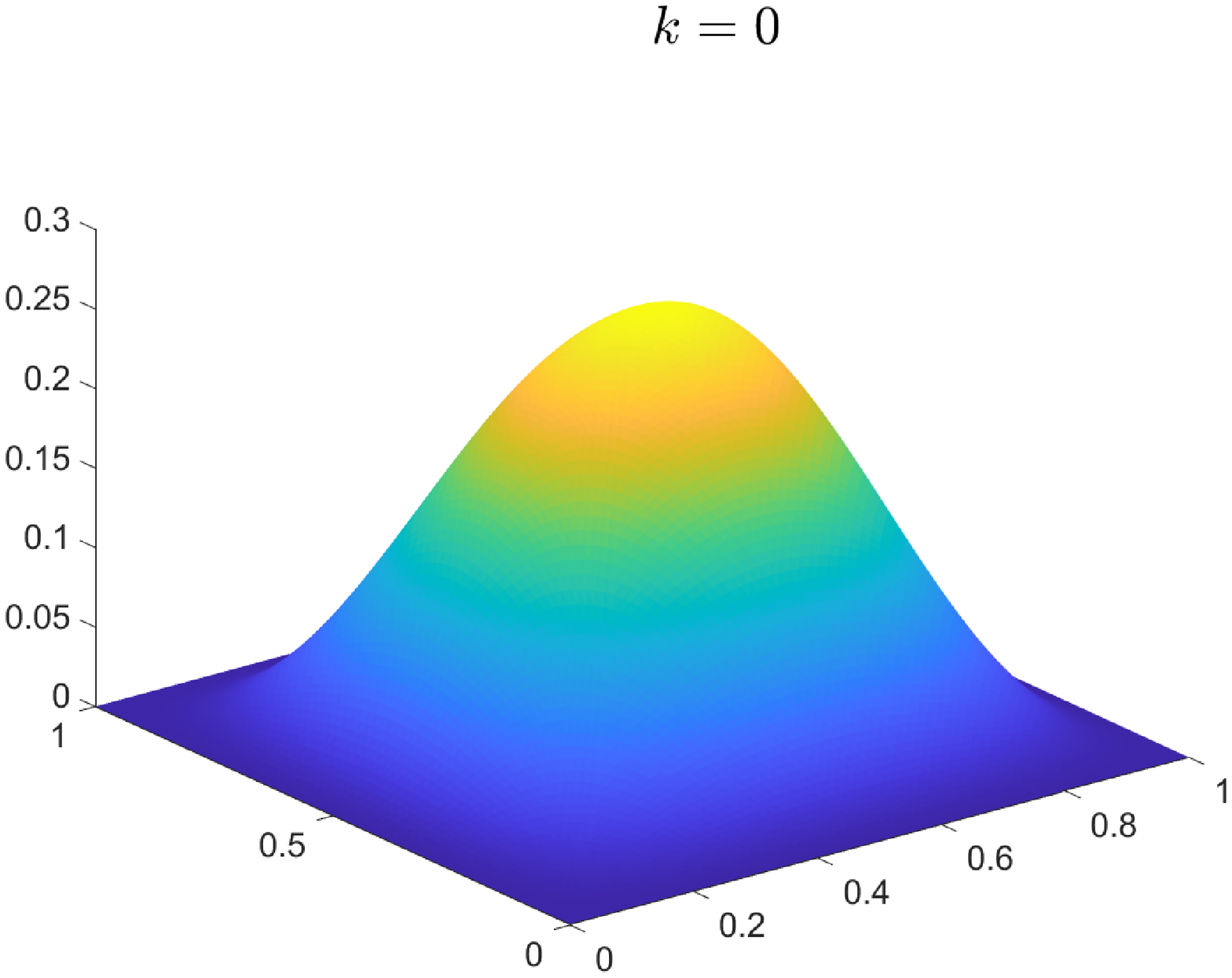}}
		\end{minipage}
		\begin{minipage}[H]{0.4\linewidth}
			{\includegraphics[width=5cm]{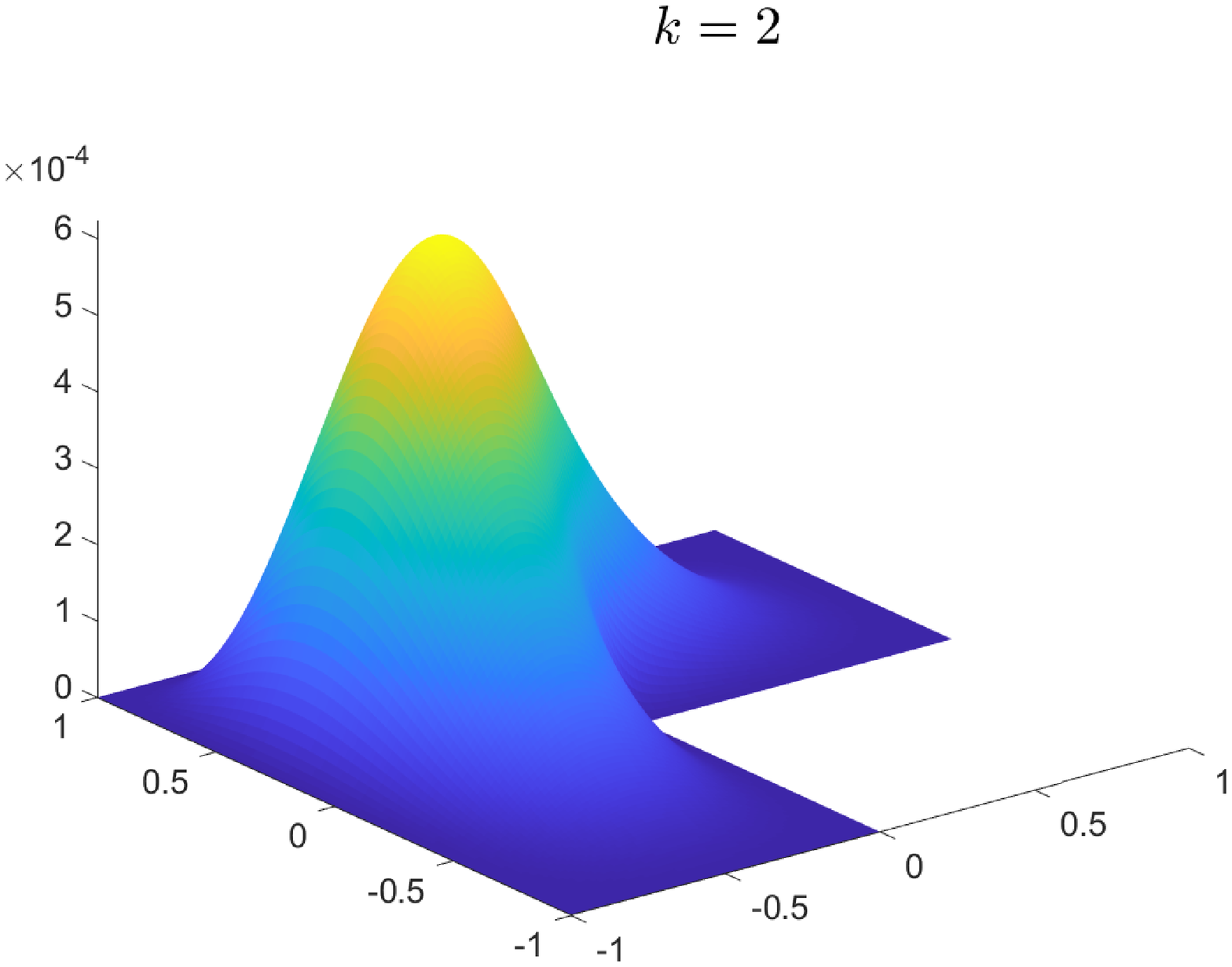}}
		\end{minipage}
		\begin{minipage}[H]{0.4\linewidth}
			{\includegraphics[width=5cm]{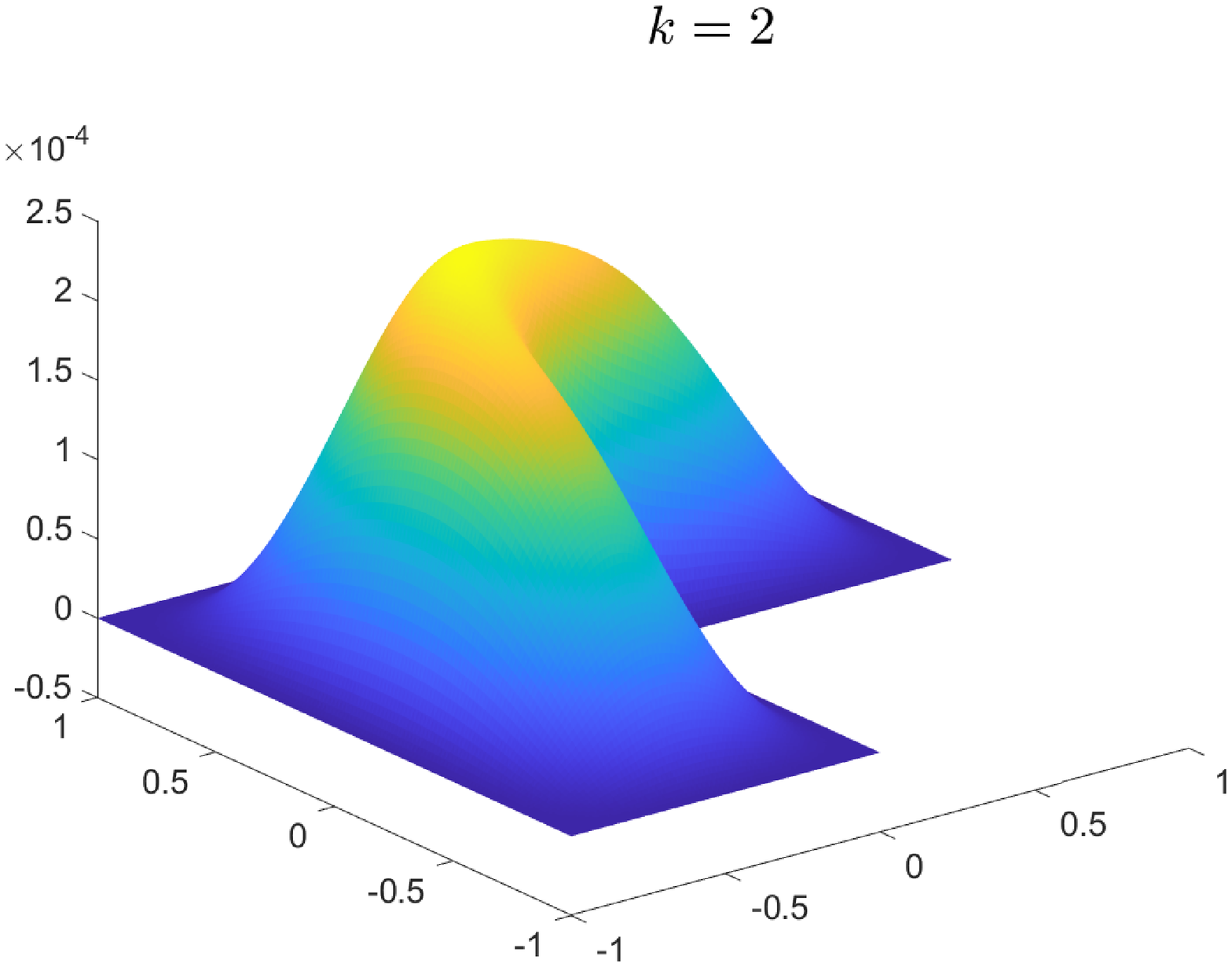}}
		\end{minipage}
		\caption{Discrete reconstructed field $f_{10^{-5},L}$ using \BFS (left) and Morley (right) FEMs, $k=0$ for Square domain and $k=2$ for L-shaped domain}\label{fig.squareLshapedmh}
	\end{center}
\end{figure}
\subsection{Conclusion}
The numerical results for the \BFS and Morley FEMs in the inverse problem are presented for square domain and L-shaped domain in Sections \ref{sec.numericalresultsBFS} and \ref{sec.numericalresultsMorley}. The outputs obtained for the square domain confirm the theoretical rates of convergence given in Theorems \ref{thm.forwarderror} and \ref{thm.totalerror} for $k=0$ and $s=r=\ell=4$. For the L-shaped domain, we expect reduced convergence rates for $m$ in $L^2$ norm and $f_\a$ in $H^k$ norm from the elliptic regularity. However, superconvergence results are obtained which indicates that the numerical performance is carried out in the non-asymptotic region for L-shaped domain which has corner singularity.

	\medskip
{\bf{Acknowledgements.}} The first author thanks National Board for Higher Mathematics, India for the financial support towards the research work (No: 0204/3/2020/R$\&$D-II/2476). 
\bibliographystyle{abbrv}
\bibliography{Fourth_order_elliptic}
\end{document}